\documentclass{amsart}
\usepackage[utf8]{inputenc}
\usepackage{amsmath,amsthm}
\usepackage{comment}
\usepackage{amssymb}
\usepackage{mathrsfs}
\usepackage{stmaryrd}
\usepackage{MnSymbol}
\usepackage{hyperref}
\usepackage{color}
\usepackage[left=3cm,right=3cm,top=3cm,bottom=4.5cm]{geometry}
\usepackage{pdfsync}
\usepackage{tikz-cd}
\usepackage{url}
\usepackage{enumitem}
\usepackage{multicol}
\usepackage{listofitems}
\hypersetup{linktoc=all}
\usepackage[capitalise]{cleveref}

\newcommand{\problematic}[1]{\textcolor{red}{#1}}

\newcommand{\defterm}[1]{\emph{#1}}

\setcounter{section}{0}

\theoremstyle{plain}
	\newtheorem{thm}{Theorem}
	\numberwithin{thm}{section}
	\newtheorem*{thm*}{Theorem}
	\newtheorem{cor}[thm]{Corollary}
	\newtheorem*{cor*}{Corollary}
	\newtheorem{prop}[thm]{Proposition}
	\newtheorem*{prop*}{Proposition}
	\newtheorem{lem}[thm]{Lemma}
	\crefname{lem}{Lemma}{Lemmas}
	\newtheorem*{lem*}{Lemma}
	
	\newtheorem*{ex*}{Exercise}
	
	\newtheorem*{claim*}{Claim}
	
	\newtheorem*{question*}{Question}
	
	\newtheorem*{fact*}{Fact}
	\newtheorem{assume}{Assumption}
	\crefname{assume}{Assumption}{Assumptions}
\theoremstyle{definition}
	\newtheorem{Def}[thm]{Definition}
	\newtheorem*{Def*}{Definition}
	
	\newtheorem*{obs*}{Observation}
	\newtheorem{rmk}[thm]{Remark}
	\newtheorem*{rmk*}{Remark}
	
	\newtheorem{soln*}{Solution}
	
	\newtheorem*{note*}{Note}
	\newtheorem{eg}[thm]{Example}
	\newtheorem*{eg*}{Example}	
	
	\newtheorem*{construction*}{Construction}
	
	\newtheorem*{warning*}{Warning}
	
	\newtheorem*{conj*}{Conjecture}
	\newtheorem{notation}[thm]{Notation}
	\newtheorem*{notation*}{Notation}
	
\crefname{prop}{Proposition}{Propositions}
\crefname{thm}{Theorem}{Theorems}
	


\newcommand{\op}{\mathrm{op}}
\newcommand{\id}{\mathrm{id}}
\newcommand{\Id}{\mathrm{Id}}

\DeclareMathOperator{\dom}{\mathrm{dom}}
\DeclareMathOperator{\cod}{\mathrm{cod}}

\newcommand{\Mor}{\operatorname{Mor}}

\newcommand{\Set}{\mathsf{Set}}

\newcommand{\Cat}{\mathsf{Cat}}

\newcommand{\sSet}{\mathsf{sSet}}



\newcommand{\calC}{\mathcal{C}}
\newcommand{\calD}{\mathcal{D}}

\newcommand{\calK}{\mathcal{K}}

\newcommand{\calM}{\mathcal{M}}
\newcommand{\calO}{\mathcal{O}}


\newcommand{\mSet}{\mathsf{sSet}^+}

\newcommand{\trunc}[1]{{\tau_{#1}}}
\newcommand{\PreComp}{\mathsf{PreComp}}

\newcommand{\precomp}{\mathrm{pre}}
\newcommand{\join}{\star}
\renewcommand{\Im}{\operatorname{Im}}
\newcommand{\co}{\mathrm{co}}
\newcommand{\coop}{\mathrm{co-op}}
\newcommand{\core}[1]{\mathrm{core}_{#1}}

\newcommand{\sDelta}[1]{\Delta^{#1}} 
\newcommand{\mDelta}[1]{\widetilde{\Delta^{#1}}} 
\newcommand{\cDelta}[2]{\Delta_{#1}^{#2}} 
\newcommand{\horn}[2]{\Lambda_{#1}^{#2}} 
\newcommand{\cDeltap}[2]{{\Delta_{#1}^{#2}}'} 
\newcommand{\cDeltapp}[2]{{\Delta_{#1}^{#2}}''} 
\newcommand{\presat}{\Delta^3_{\mathrm{eq}}} 

\newcommand{\frontface}[2]{\upmodels_1^{#1,#2}}
\newcommand{\backface}[2]{\upmodels_2^{#1,#2}}

\newcommand{\im}{\operatorname{im}}
\newcommand{\scrI}{\mathscr{I}}
\newcommand{\colim}{\operatorname{colim}}
\newcommand{\face}[2]{\partial_{#1,#2}} 
\newcommand{\admdelta}[2]{\Delta^{#1}_{#2}}
\newcommand{\admdeltap}[2]{{\Delta^{#1}_{#2}}'}
\newcommand{\admdeltapp}[2]{{\Delta^{#1}_{#2}}''}
\newcommand{\faces}[1]{\partial_{#1}} 
\newcommand{\pseudo}{\varoast} 
\newcommand{\rezkd}[2]{L_{#1#2}}
\newcommand{\rezkc}[2]{L_{#1#2}'}
\newcommand{\incl}{\hookrightarrow}
\newcommand{\pretensor}{\otimes^\precomp}

\newcommand{\Cube}{\square}
\newcommand{\cSet}{\mathsf{cSet}}
\newcommand{\mcSet}{\mathsf{cSet}^+}

\newcommand{\mcube}[1]{\widetilde{\Cube}^{#1}}
\newcommand{\cube}[1]{\Cube^{#1}}
\newcommand{\cbdy}[1]{\partial \Cube^{#1}}
\newcommand{\admcube}[3]{\Cube^{#1}_{#2,#3}}
\newcommand{\admcubep}[3]{{\Cube^{#1}_{#2,#3}}'}
\newcommand{\admcubepp}[3]{{\Cube^{#1}_{#2,#3}}''}

\newcommand{\obox}[3]{\sqcap^{#1}_{#2, #3}}

\renewcommand{\epsilon}{\varepsilon}

\title{A cubical model for $(\infty, n)$-categories}
\author[T.~Campion, K.~Kapulkin, Y.~Maehara]{Timothy Campion \and Krzysztof Kapulkin \and Yuki Maehara}
\date{June 1, 2020}

\begin{document}
\begin{abstract}
  We propose a new model for the theory of $(\infty,n)$-categories (including the case $n=\infty$) in the category of marked cubical sets with connections, similar in flavor to complicial sets of Verity.
The model structure characterizing our model is shown to be monoidal with respect to suitably defined (lax and pseudo) Gray tensor products; in particular, these tensor products are both associative and biclosed.
Furthermore, we show that the triangulation functor to pre-complicial sets is a left Quillen functor and is strong monoidal with respect to both Gray tensor products.
\end{abstract}
\maketitle


\section*{Introduction}

The theory of $(\infty,n)$-categories is becoming an important tool in a number of areas of mathematics, including manifold topology, where it is used in the definition and classification of extended topological quantum field theories \cite{lurie:tqft}, and in derived algebraic geometry, where it is used to capture certain properties of the ``category'' of correspondences \cite{gaitsgory-rozenblyum:dag-1,gaitsgory-rozenblyum:dag-2}.
There are several equivalent models for this theory, including: $n$-trivial saturated complicial sets (\cite{verity:weak-complicial-1,verity:weak-complicial-2,riehl-verity:more-elements,loubaton:complicial}), $n$-quasicategories \cite{ara:higher-quasicat}, $\Theta_n$-spaces \cite{rezk:cartesian-presentation}, and $n$-fold complete Segal spaces \cite{barwick:thesis}.

In this paper, we propose a new model for the theory of $(\infty,n)$-categories, using \emph{comical sets} (\emph{com}position + cub\emph{ical sets}).
Comical sets are certain marked cubical sets (having marked $n$-cubes for all values $n \geq 1$), just like complicial sets are certain marked simplicial sets.
Our model allows for a particularly elegant and simple treatment of the (lax and pseudo) Gray tensor products since they are inherently cubical in nature.
One can find drawings of cubes in Gray's book \cite{gray:book}, and the simplest ways of defining the lax Gray tensor product of strict $\omega$-categories are via cubical sets \cite{crans:thesis,al-agl-brown-steiner}.

Because of the obvious similarities with complicial sets, there is a natural comparison functor to marked simplicial sets.
To obtain it, we extend the usual triangulation functor $T \colon \cSet \to \sSet$ from cubical sets to simplicial sets to a marked version $T \colon \mcSet \to \PreComp$.
Here $T$ is valued not in the whole category $\mSet$ of marked simplicial sets but in the reflective subcategory $\PreComp$ of pre-complicial sets so that our results hold up to isomorphism rather than homotopy.
$\PreComp$ supports a model structure that is Quillen equivalent to the complicial model structure on $\mSet$, and the lax Gray tensor product on $\mSet$ is more well-behaved when restricted to $\PreComp$.

With that, our main results (cf.~\cref{comical-model-structure,GrayFundamentalProp,T-strong-monoidal-lax,T-strong-monoidal-pseudo,left-Quillen}) can be summarized as follows:

\begin{thm*}
  The category $\mcSet$ of marked cubical sets carries a model structure whose cofibrations are the monomorphisms and whose fibrant objects are the comical sets.
  This model structure is monoidal with respect to both the lax and pseudo Gray tensor products, which are simultaneously associative and biclosed.
  
  Furthermore, the triangulation functor $T \colon \mcSet \to \PreComp$ is left Quillen and strong monoidal with respect to both Gray tensor products.
\end{thm*}

Since this paper was first made available in 2020, a slight adaptation of our model was proven to be Quillen equivalent via the triangulation functor to $n$-trivial saturated complicial sets in \cite{doherty-kapulkin-maehara}.
The ``special cases'' of this result had previously been known for $(\infty,0)$-categories (i.e., $\infty$-groupoids) \cite{CisinskiAsterisque} and $(\infty,1)$-categories \cite{doherty-kapulkin-lindsey-sattler}, although these papers consider slightly different versions of the cubical site from us.

In particular, our model validates the assertions \cite[Props.~10.3.2.6 and 10.3.2.9]{gaitsgory-rozenblyum:dag-1}, given there without a proof or a reference.
They are essentially the desiderata of a convenient model of $(\infty, 2)$-categories used throughout \cite{gaitsgory-rozenblyum:dag-1,gaitsgory-rozenblyum:dag-2}, and in that sense our model in $\cSet^+$ is a convenient such model.
We should note however that these assertions were previously proven in \cite{verity:weak-complicial-1} and \cite{maehara:gray} in the contexts of complicial sets and $2$-quasicategories respectively.

Finally, our work owes a great deal to \cite{steiner:cubical-nerve}, where the (semi-)cubical nerves of strict $\omega$-categories are analyzed.
In particular, our definition of comical open boxes in \cref{sec:model} follows \cite[Ex.~2.9]{steiner:cubical-nerve}.

\textbf{Organization of the paper.}
We begin in \cref{sec:background} by reviewing the necessary background on model categories, cubical sets, and complicial sets.
In \cref{sec:cubical}, we introduce marked cubical sets, study their basic properties, and construct both the lax and the pseudo Gray tensor products.
In \cref{sec:model}, we define comical sets and construct the model structure for them.
As a proof of concept, we define in \cref{homotopy-1-categories} the homotopy $1$-category of a comical set and show that it has expected properties.
We then turn our attention to the comparison between the cubical and the simplicial approaches.
We extend the triangulation functor to marked cubical sets in \cref{triangulation}, show that it is strong monoidal with respect to both the lax and the pseudo Gray tensor products in \cref{sec:triangulating-gray}, and that it is a left Quillen functor in \cref{sec:triangulating-quillen}.

\textbf{Acknowledgement.}
The authors benefited greatly from conversations about related matters with Alexander Campbell, Emily Riehl, and Dominic Verity.
The paper was greatly improved by the comments of the anonymous referee.
This material is based upon work supported by the National Science Foundation under Grant No.~1440140, while the authors were in residence at the Mathematical Sciences Research Institute in Berkeley, California, in the program ``Higher Categories and Categorification'' in Spring 2020.
We would like to thank the MSRI for its hospitality, and the program organizers for giving us the opportunity to participate.
Above all, we thank (again) Emily Riehl for continued support and encouragement.



\section{Background} \label{sec:background}

In this section we introduce the notation and collect preliminary results to be used later in the paper.

\subsection{Model categories}\label{subsec:background}

In this subsection, we review (a special case of) the theory of Olschok \cite{olschok:thesis} for constructing combinatorial model structures with all objects cofibrant, which generalizes the theory of Cisinski \cite{CisinskiAsterisque} for constructing combinatorial model structures on presheaf categories with cofibrations the monomorphisms. This theory will be used to construct the model structures for comical sets.

\begin{Def}[\cite{simpson:homotopy-of-higher-cats}]
	 We say a set $\Lambda$ of trivial cofibrations in a model category $\calM$ is \defterm{pseudo-generating} if and only if any map $f$ that has a fibrant codomain and the right lifting property with respect to $\Lambda$ is a fibration.
\end{Def}

Now fix a locally presentable category $\calK$.

Given a bifunctor $\odot \colon \calK \times \calK \to \calK$ and maps $f \colon A \to A'$, $g \colon B \to B'$ in $\calK$, we denote
\[
f \hat \odot g \colon (A' \odot B) \coprod_{A \odot B} (A \odot B') \to A' \odot B'
\]
the \defterm{Leibniz $\odot$-product} of $f$ and $g$.
Similarly, for any natural transformation $\phi \colon F \Rightarrow G$ between endofunctors $F,G\colon  \calK \to \calK$ and for any $f \colon A \to A'$ in $\calK$, we denote
\[
\hat \phi_f \colon G(A) \coprod_{F(A)} F(A') \to G(A')
\]
the \defterm{Leibniz product} of $\phi$ and $f$.

By the \defterm{cellular closure} of a set $S$ of maps in $\calK$, we mean the closure of $S$ under pushouts along arbitrary maps and transfinite composition.
In the rest of this subsection, assume that we are given a small set $I$ of maps in $\calK$ whose cellular closure is precisely the monomorphisms.

\begin{Def}
A \defterm{functorial cylinder} on $\calK$ is a functor $C\colon  \calK \to \calK$ equipped with natural transformations $\partial^0, \partial^1\colon  \Id \rightrightarrows C$ and $\sigma\colon  C \to \Id$ such that $\sigma\partial^0 = \sigma \partial^1 = \id$. We also write $\partial_X = [\partial^0_X,\partial^1_X]\colon  X + X \to CX$.
We say that $C$ is a \defterm{cartesian} cylinder if the functor $C$ preserves colimits and moreover $\partial_X$ is a monomorphism for all $X$.
\end{Def}

\begin{Def}
Suppose that $\calK$ admits a cartesian functorial cylinder $C = (C,\partial^0,\partial^1,\sigma)$.
Let $S$ be a set of morphisms in $\calK$.
We define $\Lambda(\calK, I , C, S) \subseteq \Mor \calK$ to be the smallest class of morphisms containing
\[
S \cup \{\hat \partial^0_i \mid i \in I\} \cup \{\hat \partial^1_i \mid i \in I\},
\]
and closed under the operation $f \mapsto \hat \partial_f$.
\end{Def}

\begin{thm}[{\cite[Thm.~2.2.5, Lem.~2.2.20]{olschok:thesis}}] \label{olschok}
Let $\calK$ and $I$ be as above.
Suppose we are given a cartesian functorial cylinder $C$ on $\calK$ and a set $S$ of monomorphisms in $\calK$.
Then there exists a model structure on $\calK$ uniquely characterized by the following properties:
\begin{itemize}
	\item The cofibrations are the monomorphisms.
	\item The set $\Lambda(\calK,I,C,S)$ is a pseudo-generating set of trivial cofibrations.
\end{itemize}
This model structure is combinatorial and left proper.
\end{thm}

\begin{prop}\label{left-Quillen-bifunctor}
	Let $\calK$ and $I$ be as above.
	Suppose $\calK$ admits a model structure whose cofibrations are the monomorphisms, and a pseudo-generating set $\Lambda$ of trivial cofibrations.
	Suppose further that $\calK$ is equipped with a tensor product $\odot \colon \calK \times \calK \to \calK$ that forms part of a biclosed monoidal structure.
	Then these data form a monoidal model structure if and only if:
	\begin{itemize}
		\item $f \hat \odot g$ is a cofibration whenever $f,g \in I$;
		\item $f \hat \odot g$ is a trivial cofibration whenever $f \in \Lambda$ and $g \in I$; and
		\item $f \hat \odot g$ is a trivial cofibration whenever $f \in I$ and $g \in \Lambda$. 
	\end{itemize}
\end{prop}
\begin{proof}
	This is an instance of \cite[Prop.~A.4]{maehara:gray}.
	See also \cite[App.~B]{Henry:weak}.
\end{proof}

\subsection{Cubical sets}\label{subsec:cubical}
We will define cubical sets as presheaves on the \emph{box category}, denoted $\Cube$.
The category $\Cube$ is the (non-full) subcategory of the category of posets whose objects are the posets of the form $[1]^n := \{0 \leq 1\}^n$ and whose maps are generated by the cubical operators:
\begin{itemize}
	\item \emph{faces} $\partial^n_{i,\varepsilon} \colon [1]^{n-1} \to [1]^n$ for $i = 1, \ldots , n$ and $\varepsilon = 0, 1$ given by:
	\[ \partial^n_{i,\varepsilon} (x_1, x_2, \ldots, x_{n-1}) = (x_1, x_2, \ldots, x_{i-1}, \varepsilon, x_i, \ldots, x_{n-1})\text{;}  \]
	\item \emph{degeneracies} $\sigma^n_i \colon [1]^n \to [1]^{n-1}$ for $i = 1, 2, \ldots, n$ given by:
	\[ \sigma^n_i ( x_1, x_2, \ldots, x_n) = (x_1, x_2, \ldots, x_{i-1}, x_{i+1}, \ldots, x_n)\text{;}  \]
	\item \emph{max-connections} $\gamma^{n}_{i,0} \colon [1]^n \to [1]^{n-1}$ for $i = 1, 2, \ldots, n-1$ given by:
	\[ \gamma^n_{i,0} (x_1, x_2, \ldots, x_n) = (x_1, x_2, \ldots, x_{i-1}, \max\{ x_i , x_{i+1}\}, x_{i+2}, \ldots, x_n) \text{;}
	 \]
	\item \emph{min-connections} $\gamma^{n}_{i,1} \colon [1]^n \to [1]^{n-1}$ for $i = 1, 2, \ldots, n-1$ given by:
	\[ \gamma^n_{i,1} (x_1, x_2, \ldots, x_n) = (x_1, x_2, \ldots, x_{i-1}, \min\{ x_i , x_{i+1}\}, x_{i+2}, \ldots, x_n) \text{;}
	 \]
\end{itemize}

We will omit the superscript $n$ when no confusion is possible.

A straightforward computation shows that cubical operators satisfy the following \emph{cubical identities}.
These maps obey the following \emph{cubical identities}:

\begin{multicols}{2}
$\partial_{j, \varepsilon'} \partial_{i, \varepsilon} = \partial_{i+1, \varepsilon} \partial_{j, \varepsilon'}$ for $j \leq i$;

$\sigma_i \sigma_j = \sigma_j \sigma_{i+1} \quad \text{for } j \leq i$;

$\sigma_j \partial_{i, \varepsilon} = \left\{ \begin{array}{ll}
\partial_{i-1, \varepsilon} \sigma_j   & \text{for } j < i \text{;} \\
\id                                                       & \text{for } j = i \text{;} \\
\partial_{i, \varepsilon} \sigma_{j-1} & \text{for } j > i \text{;}
\end{array}\right.$

$\gamma_{j,\varepsilon'} \gamma_{i,\varepsilon} = \left\{ \begin{array}{ll} \gamma_{i,\varepsilon} \gamma_{j+1,\varepsilon'} & \text{for } j > i \text{;} \\
\gamma_{i,\varepsilon}\gamma_{i+1,\varepsilon} & \text{for } j = i, \varepsilon' = \varepsilon \text{;}\\
\end{array}\right.$

$\gamma_{j,\varepsilon'} \partial_{i, \varepsilon} =  \left\{ \begin{array}{ll}
\partial_{i-1, \varepsilon} \gamma_{j,\varepsilon'}   & \text{for } j < i-1 \text{;} \\
\id                                                         & \text{for } j \in \{i-1,\, i\}, \, \varepsilon = \varepsilon' \text{;} \\
\partial_{i, \varepsilon} \sigma_i         & \text{for } j \in \{i-1,\, i\}, \, \varepsilon = 1-\varepsilon' \text{;} \\
\partial_{i, \varepsilon} \gamma_{j-1,\varepsilon'} & \text{for } j > i \text{;} 
\end{array}\right.$

$\sigma_j \gamma_{i,\varepsilon} =  \left\{ \begin{array}{ll}
\gamma_{i-1,\varepsilon} \sigma_j  & \text{for } j < i \text{;} \\
\sigma_i \sigma_i           & \text{for } j = i \text{;} \\
\gamma_{i,\varepsilon} \sigma_{j+1} & \text{for } j > i \text{.} 
\end{array}\right.$
\end{multicols}

Let us point out that this is only one of the many choices of the box category that appear in the literature.
References such as \cite{maltsiniotis:connections-strict-test-cat,kapulkin-lindsey-wong} consider a box category that is spanned by faces, degeneracies, and one of the connections, specifically the max-connection (although dual arguments can be used to work with min-connections as well).
In \cite{CisinskiAsterisque,jardine:categorical-homotopy-theory}, a subcategory of our $\Box$ is considered that is generated by the face and degeneracy maps, but no connections; and in \cite{steiner:cubical-nerve}, an even smaller subcategory is considered, as it is spanned by the face maps alone.
On the other extreme, \cite{kapulkin-voevodsky} works with $\Box$ as the full subcategory of $\Cat$.

Our choice is intentional.
Since our (marked) cubical sets will be used to model $(\infty, n)$-categories, all of our cubes need to have an orientation, and hence the symmetry and diagonal maps appearing in the choices strictly larger than ours are undesirable.
On the other hand, the box category with at least one connection is known to have better categorical properties than the ones without connections, cf.~\cite{tonks:cubical-groups,maltsiniotis:connections-strict-test-cat}.
Finally, allowing for at least one connection, we choose to work with both to allow for a convenient description of the opposite $(\infty, n)$-category.

Given our choice of the box category, we have the following \emph{normal form} of morphisms in $\Cube$.

\begin{thm}[{\cite[Thm.~5.1]{grandis-mauri}}] \label{normal-form}
  Every map in the category $\Box$ can be factored uniquely as a composite
  \[ (\partial_{c_1, \varepsilon'_1} \ldots \partial_{c_r, \varepsilon'_r})
     (\gamma_{b_1,\varepsilon_1} \ldots \gamma_{b_q,\varepsilon_q})
     (\sigma_{a_1} \ldots \sigma_{a_p})\text{,} \]
  where $1 \leq a_1 < \ldots < a_p$, $1 \leq b_1 \leq \ldots \leq b_q$, $b_i < b_{i+1}$ if $\varepsilon_{i} = \varepsilon_{i+1}$, and $c_1 > \ldots > c_r \geq 1$.   \qed
\end{thm}

With this, one can describe $\Cube$ as the category generated by the cubical operators, subject to the cubical identities.

\begin{rmk}
	In particular, any composite face map can be written uniquely as $	\alpha = \face{k_1}{\epsilon_1}\dots\face{k_t}{\epsilon_t}$	with $k_1 > \dots > k_t$.
	Geometrically, such $\alpha$ is the intersection of all $\face{k_s}{\epsilon_s}$'s.
\end{rmk}

This theorem allows us to prove the following key property of $\Cube$:

\begin{thm}\label{Cube-EZ-Reedy}
	The category $\Cube$ is an EZ Reedy category with the structure defined as:
	\begin{itemize}
	  \item $\deg [1]^n = n$;
	  \item $\Cube_-$ is generated under composition by degeneracies and (both kinds of) connections;
	  \item $\Cube_+$ is generated under composition by face maps.
	\end{itemize}
\end{thm}

The key difficulty in proving this theorem lies in showing that each map in $\Cube_-$ is determined by its sections.
This is done by induction on the length of the decomposition of such a map given in \cref{normal-form}.
Before proceeding with the proof, we state two lemmas.
The first of these contains the base case of induction, whereas the second contains the technical heart of the proof---a case analysis allowing us to complete the inductive step.

\begin{lem} \label{sections-of-generating-degeneracies} \leavevmode
  \begin{enumerate}
    \item The sections of $\sigma_i$ are $\partial_{i,0}$ and $\partial_{i,1}$.
    \item The sections of $\gamma_{i,0}$ are $\partial_{i,0}$ and $\partial_{i+1,0}$.
    \item The sections of $\gamma_{i,1}$ are $\partial_{i,1}$ and $\partial_{i+1,1}$. \qed
  \end{enumerate}
\end{lem}

\begin{lem} \label{Cube-EZ-Reedy-technical}
  Given two distinct maps $p, p' \colon [1]^n \to [1]^{n-k}$ in $\Cube_-$, there is a face map $\partial_{i, \varepsilon} \colon [1]^{n-k} \to [1]^{n-(k-1)}$ such that $p \partial_{i, \varepsilon} \neq p' \partial_{i, \varepsilon}$ and at least one of $p \partial_{i, \varepsilon}$ and $p' \partial_{i, \varepsilon}$ is in $\Cube_-$.
\end{lem}

\begin{proof}
  We proceed by induction with respect to $k$ with the base case of $k=1$ handled by \cref{sections-of-generating-degeneracies}.

  For the inductive step, we may use \cref{normal-form} to write:
  \begin{align*}
    p  & = \gamma_{i_1, \varepsilon_1} \ldots \gamma_{i_l, \varepsilon_l}
        \sigma_{j_1} \ldots \sigma_{j_m} \\
    p' & = \gamma_{i_1', \varepsilon_1'} \ldots \gamma_{i'_{l'}, \varepsilon'_{l'}}
        \sigma_{j_1'} \ldots \sigma_{j'_{m'}}
  \end{align*}
  and without loss of generality we may assume that $m \leq m'$.
  
  We first suppose that there is an index $j_i$ that does not appear in the set $j'_1, \ldots, j'_{m'}$, i.e., there is a degeneracy in the decomposition of $p$ that is not present in the decomposition of $p'$.
  Then we may note that the normal form of $p \partial_{j_i, 0}$ is obtained by removing $\sigma_{j_i}$ from the normal form of $p$, and hence the resulting map is in $\Cube_-$.
  On the other hand, the normal form $p' \partial_{j_i, 0}$ will contain more degeneracy maps than that of $p \partial_{j_i, 0}$, since $\partial_{j_i, 0}$ will not cancel with any of the degeneracy maps present in the normal form of $p'$ and we assumed $m \leq m'$.
  
  If there is no such $j_i$, then the string $\sigma_{j_1} \ldots \sigma_{j_m}$ is a substring of $\sigma_{j_1'} \ldots \sigma_{j'_{m'}}$.
  By precomposing with different face maps, we may assume that $m = 0$.
  We proceed by case analysis, addressing $m' \geq 2$, $m' = 1$, and $m' = 0$ in order.
  
  For $m' \geq 2$, we can write:
    \begin{align*}
    p  & = \gamma_{i_1, \varepsilon_1} \ldots \gamma_{i_\ell, \varepsilon_\ell} \\
    p' & = \gamma_{i_1', \varepsilon_1'} \ldots \gamma_{i'_{\ell'}, \varepsilon'_{\ell'}}
        \sigma_{j_1'} \ldots \sigma_{j'_{m'}}\text{.}
  \end{align*}
  Now observe that $p\partial_{i_\ell, \varepsilon_\ell} = \gamma_{i_1, \varepsilon_1} \ldots \gamma_{i_{\ell-1}, \varepsilon_{\ell-1}}$ is in the normal form (and belongs to $\Cube_-$), but the normal form of $p\partial_{i_l, \varepsilon_l}$ must end with at least one degeneracy.
  
  For $m' = 1$, we can write:
  \begin{align*}
    p  & = \gamma_{i_1, \varepsilon_1} \ldots \gamma_{i_\ell, \varepsilon_\ell} \\
    p' & = \gamma_{i_1', \varepsilon_1'} \ldots \gamma_{i'_{\ell-1}, \varepsilon'_{\ell-1}}
        \sigma_{j'}\text{.}
  \end{align*}
  To treat this case, we will precompose both $p$ and $p'$ with $\partial_{j', \varepsilon}$ to cancel the degeneracy appearing in the normal form of $p'$, yielding a normal form of $p'\partial_{j', \varepsilon}$, which then clearly belongs to $\Cube_-$.
  However, some care needed to choose the correct $\varepsilon$ in order to ensure that the normal form of $p \partial_{j', \varepsilon}$ is different from that of $p' \partial_{j', \varepsilon}$.
  If $j'$ appears in the sequence: $i_1$, \ldots, $i_\ell$, then we pick $\varepsilon = 1 - \varepsilon_{j'}$.
  With this choice, the normal form of $p \partial_{j', \varepsilon}$ will end with a degeneracy, making it distinct from $p \partial_{j', \varepsilon}$.
  If on the other hand $j'$ does not appear on the list of indices: $i_1, \ldots, i_l$, we first need to determine whether when using cubical identities to write $p \partial_{j', \varepsilon}$ in normal form, we will encounter a connection with which our face map will cancel: if not, then we can pick either $\varepsilon$; otherwise, we pick $\varepsilon$ in such a way as to ensure that as a result of commuting the face and the connection, we obtain a degeneracy map.
  
  At this point, it remains to treat the case when $m' = 0$, i.e., the normal forms of $p$ and $p'$ consist solely in connections:
  \begin{align*}
    p  & = \gamma_{i_1, \varepsilon_1} \ldots \gamma_{i_k, \varepsilon_k} \\
    p' & = \gamma_{i_1', \varepsilon_1'} \ldots \gamma_{i'_k, \varepsilon'_k}\text{.}
  \end{align*}
  Note that the two decompositions have the same length, since both $p$ and $p'$ are maps $[1]^n \to [1]^{n-k}$.
  Without loss of generality, we may assume that $i_k \leq i'_k$, and we proceed by case analysis based on $i'_k - i_k$, considering three cases: $i'_k = i_k$, $i'_k = i_k + 1$, and $i'_k \geq i_k +2$.
  
  If $i_k = i'_k$, we precompose $p$ and $p'$ with $\partial_{i_k, \varepsilon_k}$.
  In the case of $\varepsilon_k = \varepsilon'_k$, this reduces us to the inductive hypothesis.
  If however $\varepsilon_k \neq \varepsilon'_k$, then the normal form of $p \partial_{i_k, \varepsilon_k}$ will be $\gamma_{i_1, \varepsilon_1} \ldots \gamma_{i_{k-1}, \varepsilon_{k-1}}$, making it an element of $\Cube_-$, whereas the normal form of $p' \partial_{i_k, \varepsilon_k}$ will end in a degeneracy.
  
  Next, suppose that $i'_k = i_k + 1$.
  Then the cases of $\varepsilon_k = \varepsilon'_k$ and $\varepsilon_k \neq \varepsilon'_k$ need to be treated separately.
  In the former, we precompose with $\partial_{i_k, \varepsilon_k}$.
  Then $p \partial_{i_k, \varepsilon_k} = \gamma_{i_1, \varepsilon_1} \ldots \gamma_{i_{k-1}, \varepsilon_{k-1}}$ is the normal form, making it an element of $\Cube_-$, but the normal form of $p' \partial_{i_k, \varepsilon_k}$ ends with one of: a degeneracy, a connection of first index greater than $i_k$, or a connection $\gamma_{i_k, \varepsilon'_k}$, making it distinct from $p \partial_{i_k, \varepsilon_k}$.
  In the latter case, we see that the normal form of $p \partial_{i_{k+1}, \varepsilon'_k}$ ends with a degeneracy, but the normal form of $p' \partial_{i_{k+1}, \varepsilon'_k}$ ends with $\gamma_{i'_{k-1}, \varepsilon'_{k-1}}$ and this element belongs to $\Cube_-$.
  
  Finally, if $i'_k \geq i_k + 2$, then we precompose both $p$ and $p'$ with $\partial_{i_k, \varepsilon_k}$.
  This gives the normal form of $p\partial_{i_k, \varepsilon_k}$ as $\gamma_{i_1, \varepsilon_1} \ldots \gamma_{i_{k-1}, \varepsilon_{k-1}}$, making it an element of $\Cube_-$.
  But the normal form of $p' \partial_{i_k, \varepsilon_k}$ ends with either a degeneracy or the connection $\gamma_{i'_k, \varepsilon'_k}$, making it distinct.
\end{proof}

\begin{proof}[Proof of \cref{Cube-EZ-Reedy}]
  The Reedy part follows immediately from \cref{normal-form}.
  Every morphism in $\Cube_-$ is a split epimorphism by \cref{sections-of-generating-degeneracies}.
  
  It remains to show that maps in $\Cube_-$ are determined by their sections.
  To do this, we pick two such maps $p, p' \colon [1]^n \to [1]^{n-k}$ and proceed by induction with respect to $k$.
  The base case of $k = 1$ is handled by \cref{sections-of-generating-degeneracies}, whereas the inductive step is handled by \cref{Cube-EZ-Reedy-technical}.
%
\end{proof}

The category $\Cube$ carries a canonical strict monoidal product $\otimes$ given by $[1]^m \otimes [1]^n := [1]^{m+n}$ with unit given by $[1]^0$.
Note that this product is not cartesian since, for instance, there is no `diagonal' map $[1]^1 \to [1]^2$ in $\Cube$.
This monoidal structure leads to another characterization of our box category, due to Grandis and Mauri \cite[\S5]{grandis-mauri}, as a certain kind of a free monoidal category.

A \emph{cubical monoid} in a monoidal category $(\calC, \otimes, I)$ is an object $X$ equipped with maps:
\[ \partial_0, \partial_1 \colon I \to X\text{,} \quad \sigma \colon X \to I\text{,} \quad \gamma_0, \gamma_1 \colon X \otimes X \to X\text{,}\]
subject to the axioms:
\begin{multicols}{2}
	$\sigma \partial_\varepsilon = \id$ for $\varepsilon=0,1$;
	
	$\sigma \gamma_\varepsilon = \sigma (\sigma \otimes \id_X) = \sigma (\id_X \otimes \sigma)$ for $\varepsilon = 0, 1$;
	
	$\gamma_\varepsilon (\gamma_\varepsilon \otimes \id_X) = \gamma_\varepsilon (\id_X \otimes \gamma_\varepsilon)$  for $\varepsilon = 0, 1$;

	$\gamma_\varepsilon (\partial_\varepsilon \otimes \id_X) = \id_X = \gamma_\varepsilon (\id_X \otimes \partial_\varepsilon)$ for $\varepsilon = 0, 1$;
	
	$\gamma_\varepsilon (\partial_\delta \otimes \id_X) = \partial_\delta \sigma = \gamma_\varepsilon (\id_X \otimes \partial_\delta)$ for $\delta \neq \varepsilon$.
		\end{multicols}

\begin{thm}[{\cite[Thm.~5.2.(d)]{grandis-mauri}}] \label{GrandisMauriMonoid}
	The box category $\Cube$ is the free strict monoidal category equipped with a cubical monoid. \qed
\end{thm}

Having established basic properties of the box category, we can now define cubical sets and fundamental constructions on them.

\begin{Def}
  A \emph{cubical set} is a presheaf $X \colon \Cube^\op \to \Set$.
  A \emph{cubical map} is a natural transformation of such presheaves.
  The category of cubical sets and cubical maps will be denoted $\cSet$.
\end{Def}

We write $\Cube^n$ for the cubical set represented by $[1]^n$ and call it the (\emph{generic}) $n$-\emph{cube}.
The \emph{boundary} of the $n$-cube, denoted $\partial \Cube^n \to \Cube^n$, is the maximal proper subobject of the representable $\Cube^n$, i.e., the union of all of its faces.
The subobject of $\Cube^n$ given by the union of all faces except the $(i,\varepsilon)$-th one is called the $(i, \varepsilon)$-\emph{open box} and denoted $\obox{n}{i}{\varepsilon} \to \Cube^n$.

\begin{prop}
  The monomorphisms of $\cSet$ are the cellular closure of the set
  \[ \{ \partial \Cube^n \hookrightarrow \Cube^n \ | \ n \geq 0 \}\text{.} \]
\end{prop}

\begin{proof}
  This follows from \cref{normal-form}.
\end{proof}

The monoidal product $\otimes$ can be extended via Day convolution from $\Cube$ to $\cSet$, making $(\cSet, \otimes, \Cube^0)$ a biclosed monoidal category.
We refer to this monoidal product as the \emph{geometric product} of cubical sets.

We adopt the convention of writing the action of cubical operators on the right, e.g., the $(1, 0)$-face of an $n$-cube $x \colon \Cube^n \to X$ will be denoted $x \partial_{1, 0}$.

\begin{prop}\label{geometric-product-description}
  The geometric product $X \otimes Y$ of cubical sets $X$ and $Y$ admits the following description.
   \begin{itemize}
 	\item For $n \geq 0$, the $n$-cubes in $X \otimes Y$ are the formal products $x \otimes y$ of pairs $x \in X_k$ and $y \in Y_\ell$ such that $k+\ell = n$, subject to the identification $(x\sigma_{k+1})\otimes y = x\otimes(y\sigma_{1})$.
 	\item For $x \in X_k$ and $y \in Y_\ell$, the faces, degeneracies, and connections of the $(k+\ell)$-cube $x \otimes y$ are computed as follows:
 	\begin{itemize}
 		\item $(x\otimes y)\partial_{i,\epsilon} = 
 		\begin{cases} (x\partial_{i,\varepsilon})\otimes y & 1 \leq i \leq k \\ 
 		x\otimes( y\partial_{i-k,\epsilon})  & k + 1 \leq i \leq k+\ell
 		\end{cases}$
 		\item $(x\otimes y)\sigma_{i} = 
 		\begin{cases} (x\sigma_{i})\otimes y     & 1 \leq i \leq k + 1 \\
 		x\otimes (y\sigma_{i-k}) & k + 1 \leq i \leq k+\ell+ 1 
 		\end{cases}$  
 		\item $(x\otimes y)\gamma_{i,\varepsilon} = 
 		\begin{cases} (x\gamma_{i, \varepsilon})\otimes y     & 1 \leq i \leq k \\ 
 		x\otimes (y\gamma_{i-k,\varepsilon}) & k + 1 \leq i \leq k+\ell
 		\end{cases}$
 	\end{itemize}
 \end{itemize}
\end{prop}

\begin{proof}
  This is proven in \cite[Prop.~1.20]{doherty-kapulkin-lindsey-sattler} in the case of cubical sets with one kind of connections.
  The proof given there works almost verbatim in our case.
\end{proof}

Given cubes $x \in X_k$ and $y \in Y_\ell$, we may regard them as cubical maps $x \colon \Cube^k \to X$ and $y \colon \Cube^\ell \to Y$.
Then applying the geometric product to these maps yields a map $x \otimes y \colon \Cube^{k+\ell} \to X \otimes Y$ which corresponds precisely to the $(k+\ell)$-cube with the same name.
Moreover, every $n$-cube of $X \otimes Y$ arises via this construction for some, perhaps non-unique, pair of cubes $(x \colon \Cube^k \to X, y \colon \Cube^\ell \to Y)$ for $k +\ell =n$.

Since the identification in \cref{geometric-product-description} only concerns degenerate cubes, we obtain the following corollary.

\begin{cor} \label{non-deg-cubes-in-Gray}
  A pair of non-degenerate cubes $x \in X_k$, $y \in Y_\ell$ yields a non-degenerate $(k+\ell)$-cube $x \otimes y$ in $X \otimes Y$.
Conversely, every non-degenerate cube in $X \otimes Y$ arises this way from a unique pair of non-degenerate cubes. \qed
\end{cor}

\begin{rmk}
	In particular, when $X = \cube m$ and $Y = \cube n$ are representable this pairing is given by the formula
	\[
	(\face{k_1}{\epsilon_1}\dots\face{k_t}{\epsilon_t}) \otimes (\face{\ell_1}{\eta_1} \dots \face{\ell_s}{\eta_s}) = \face{m+\ell_1}{\eta_1}\dots\face{m+\ell_s}{\eta_s}\face{k_1}{\epsilon_1}\dots\face{k_t}{\epsilon_t}
	\]
	where all strings of $\partial$'s are in the normal form specified by \cref{normal-form}.
	The factors are permuted because the geometric product lists cubes in order (in the sense that $x$ in $x \otimes y$ corresponds to smaller values of $i$) whereas the normal form lists faces in reverse order.
\end{rmk}

\begin{prop}\label{geom-prod-of-bdry}
For natural numbers $k$, $m$, and $n$, and $\varepsilon = 0, 1$, we have natural isomorphisms:
	\[
	\begin{gathered}
	\bigl(\partial \cube m \hookrightarrow \cube m\bigr) \hat \otimes \bigl(\partial \cube n \hookrightarrow \cube n \bigr) \cong \bigl(\partial \cube {m+n} \hookrightarrow \cube {m+n} \bigr)\\
	\bigl(\obox m k \epsilon \hookrightarrow \cube m\bigr) \hat \otimes \bigl(\partial \cube n \hookrightarrow \cube n \bigr) \cong \bigl(\obox {m+n} k \epsilon \hookrightarrow \cube {m+n} \bigr)\\
	\bigl(\partial \cube m \hookrightarrow \cube m\bigr) \hat \otimes \bigl(\obox n k \epsilon \hookrightarrow \cube n \bigr) \cong \bigl(\obox {m+n} {m+k} \epsilon \hookrightarrow \cube {m+n} \bigr)
	\end{gathered}
	\]
\end{prop}
\begin{proof}
	This follows from \cite[Lem.~1.26]{doherty-kapulkin-lindsey-sattler} and the associativity of $\hat\otimes$.
\end{proof}

Using the above proposition and the fact that $\Cube$ is an elegant Reedy category, we obtain:

\begin{cor}\label{geom-prod-of-monos}
	If $f$ and $g$ are monomorphisms in $\cSet$, then $f \hat \otimes g$ is again a monomorphism. \qed
\end{cor}

The category $\Cube$ admits two canonical identity-on-objects automorphisms $(-)^\co, (-)^\coop \colon \Cube \to \Cube$.
The first one takes $\partial^n_{i, \varepsilon}$ to $\partial^n_{n+1-i, \varepsilon}$, $\sigma^n_i$ to $\sigma^n_{n+1-i}$, and $\gamma^n_{i,\varepsilon}$ to $\gamma^n_{(n-1)+1-i, \varepsilon}$.
The second one takes $\partial^n_{i, \varepsilon}$ to $\partial^n_{i, 1- \varepsilon}$, $\sigma^n_i$ to $\sigma^n_{i}$, and $\gamma^n_{i,\varepsilon}$ to $\gamma^n_{i, 1- \varepsilon}$.
(Their names are motivated by the fact that, according to the source/target distinction described in \cref{sec:model} below, $(-)^\co$ reverses the direction of even-dimensional cubes and $(-)^\coop$ reverses the direction of all cubes.)
Precomposition with these automorphisms induces functors also denoted $(-)^\co , (-)^\coop\colon \cSet \to \cSet$.
Moreover, $(-)^\co \circ (-)^\coop = (-)^\coop \circ (-)^\co$, yielding a third automorphism $(-)^\op := (-)^\co \circ (-)^\coop$.

The ``contravariant'' behavior of these automorphisms with respect to the cubical structure can be seen via their interaction with the geometric product.

\begin{prop} \leavevmode
\begin{enumerate}
  \item   The functor $(-)^\co \colon \cSet \to \cSet$ is strong anti-monoidal, i.e., $(X \otimes Y)^\co \cong Y^\co \otimes X^\co$, naturally in $X$ and $Y$.
  \item The functor $(-)^\coop  \colon \cSet \to \cSet$ is strong monoidal i.e., $(X \otimes Y)^\coop \cong X^\coop \otimes Y^\coop $, naturally in $X$ and $Y$.
  \item   The functor $(-)^\op  \colon \cSet \to \cSet$ is strong anti-monoidal, i.e., $(X \otimes Y)^\op \cong Y^\op \otimes X^\op$, naturally in $X$ and $Y$. \qed
\end{enumerate}
\end{prop}

Finally, the composite $\Cube \to \Cat \to \sSet$ given by $\Cube^n \mapsto (\Delta^1)^n$ defines a co-cubical object in the category of simplicial sets. Taking the Yoneda extension, we obtain an adjoint pair
\[ T :  \cSet \rightleftarrows \sSet : U\text{.} \]
We will call $T \colon \cSet \to \sSet$ the \emph{triangulation} functor.

The triangulation functor can also be seen through the lenses of \cref{GrandisMauriMonoid}.
The simplicial faces $\partial_1, \partial_0 \colon [0] \to [1]$ and degeneracy $\sigma_0 \colon [1] \to [0]$ maps, along with $\max, \min \colon [1]^2 \to [1]$ equip $[1]$ with the structure of a cubical monoid.
Since the nerve functor preserves products, this gives a structure of a cubical monoid on $\Delta^1$ in $\sSet$.
The triangulation functor $T \colon \cSet \to \sSet$ arises from this cubical monoid via \cref{GrandisMauriMonoid}.

We conclude this section by recording some properties of the triangulation functor.

\begin{prop}\leavevmode
  \begin{enumerate}
    \item $T$ is strong monoidal.
    \item $T$ preserves monomorphisms.
  \end{enumerate}    
\end{prop}

\begin{proof}
  The first statement follows by the fact that $T$ preserves colimits and $\sSet$ is cartesian closed.
  
  The second statement follows from first, since $T$ takes boundary inclusions, i.e., the elements of the cellular model, to monomorphisms.
\end{proof}



\subsection{Complicial sets}
In this section, we recall marked simplicial sets and model structures for ($n$-trivial) complicial sets from \cite{verity:complicial,verity:weak-complicial-1}.
The reader is referred to those papers for more detail on the subject.
The theory developed in \cref{sec:model} draws great insight from this simplicial precursor.

Just as in the case of cubical sets, when working with simplicial sets, we will write the action of simplicial operators on the right.

\begin{Def}
A \defterm{marked simplicial set} is a simplicial set $X$ equipped with a subset $eX$ of its simplices called the \defterm{marked} simplices such that
\begin{itemize}
	\item no 0-simplex is marked, and
	\item every degenerate simplex is marked.
\end{itemize}
A \defterm{map} of marked simplicial sets $f\colon X \to Y$ is a map of simplicial sets which carries marked simplices to marked simplices. We denote $\mSet$ for the category of marked simplicial sets with maps for morphisms.
\end{Def}

Marked simplicial sets used to be called \emph{stratified simplicial sets} (cf.~e.g., \cite{verity:complicial}), but the name `marked' is more descriptive and has since become more popular.

There is a natural forgetful functor $\mSet \to \sSet$, which has both left and right adjoints.
The left adjoint $X \mapsto X^\flat$ endows a simplicial set $X$ with the \defterm{minimal marking}, marking only the degenerate simplices. The right adjoint $X \mapsto X^\sharp$ endows a simplicial set $X$ with the \defterm{maximal marking}, marking all simplices.

If $X$ is a simplicial set, we will by default consider it as a marked simplicial set \emph{with its minimal marking} $X^\flat$.

\begin{Def}
We say that $X \in \mSet$ is \defterm{$n$-trivial} if every simplex of dimension $\geq n+1$ is marked.
\end{Def}

Given a marked simplicial set $X$, we will write $\core n X$ for its maximal $n$-trivial subset.
In other words, the $k$-simplices of $\core n X$ are precisely those $k$-simplices $x$ in $X$ such that $x \alpha$ is marked in $X$ for any $\alpha \colon [m] \to [k]$ with $m >n$.
This assignment extends to a functor $\core n \colon \mSet \to \mSet$, which admits a left adjoint $\trunc n \colon \mSet \to \mSet$.
Explicitly, $\trunc n$ acts as the identity on the underlying simplicial set and a $k$-simplex is marked in $\trunc n X$ if either $k \leq n$ and $x$ is marked in $X$ or $k \geq n+1$.

\begin{Def}
  A map $X \to Y$ of marked simplicial sets is:
  \begin{itemize}
    \item \emph{regular} if it creates markings, i.e., for an $n$-simplex $x$ of $X$ we have: $x \in eX_n$ if and only if $f(x) \in eY_n$;
    \item \emph{entire} if the induced map between the underlying simplicial sets is invertible.
  \end{itemize}
\end{Def}


We now define several distinguished objects and maps in $\mSet$. These will be essential to the description of various model structures we will be considering.

We denote $\mDelta n = \trunc {n-1} (\sDelta n)$ the $n$-simplex with the non-degenerate $n$-simplex marked and no other non-degenerate simplices marked. We call the canonical map $\sDelta n \to \mDelta n$ the \defterm{$n$-marker}.

For $n \geq 1$ and $0 \le k  \le n$, we denote $\cDelta k n$ the $n$-simplex with the following marking: a non-degenerate simplex is marked iff it contains all of the points $\{k-1,k,k+1\} \cap [n]$ among its vertices.
We call $\cDelta k n$ the \defterm{$k$-complicial $n$-simplex}. We denote $\horn k n \subset \cDelta k n$ the $k$-horn of dimension $n$ (i.e.~the simplicial subset missing the non-degenerate $n$-simplex and the $k$th $(n-1)$-face) endowed with the marking making it a regular subset of $\cDelta k n$. We call $\horn k n$ the \defterm{complicial $k$-horn of dimension $n$}.
We call the inclusion $\horn k n \to \cDelta k n$ the \defterm{$k$-complicial horn inclusion of dimension $n$}. 
We denote $\cDeltapp k n = \trunc {n-2} \cDelta k n$, and we denote $\cDeltap k n = \cDelta k n \amalg_{\horn k n} \trunc {n-2} \horn k n$.
The canonical inclusion $\cDeltap k n \to \cDeltapp k n$ is called the \defterm{elementary $k$-complicial marking extension of dimension $n$}.

Let $\presat$ denote the marked simplicial set obtained from $\Delta^3$ by marking the $1$-simplices $\{0,2\}$, $\{1,3\}$ and all $2$- and $3$-simplices.
By a \emph{saturation map}, we mean a map of the form $\Delta^m \join \presat \to \Delta^m \join (\Delta^3)^\sharp$ for $m \ge -1$ (where $\Delta^{-1}$ is interpreted as $\varnothing$).

There are two standard model structures on marked simplicial sets:

\begin{thm}\label{complicial-model-structure}
  The category $\mSet$ carries two model structures:
  \begin{enumerate}
  	\item the \defterm{complicial model structure} characterized by the following properties:
  	\begin{itemize}
  		\item The cofibrations are the monomorphisms.
  		\item The set of
  		\begin{itemize}
  			\item complicial horn inclusions, and 
  			\item elementary complicial marking extensions
  		\end{itemize}
  		forms a pseudo-generating set of trivial cofibrations.
  	\end{itemize}
  	\item the \defterm{saturated complicial model structure} characterized by the following properties:
  	\begin{itemize}
  		\item The cofibrations are the monomorphisms.
  		\item The set of
  		\begin{itemize}
  			\item complicial horn inclusions,
  			\item elementary complicial marking extensions, and
  			\item saturation maps
  		\end{itemize}
  		forms a pseudo-generating set of trivial cofibrations.
  		\end{itemize}
  \end{enumerate}
Both of these model structures are cartesian.
\end{thm}

\begin{proof}
	This is a combination of \cite[Lem.~72, Thm.~100~\&~Lem.~105]{verity:weak-complicial-1} and \cite[App.~B]{ozornova-rovelli:model-structure}.
\end{proof}

Note that since the terminal object is always fibrant, this includes a characterization of the fibrant objects of the model structure,s which are called (\defterm{saturated}) \defterm{complicial sets}.
\begin{Def}
  A map of marked simplicial sets $X \to Y$ is 
  \begin{itemize}
    \item a \defterm{complicial marking extension} if it is in the cellular closure of the elementary complicial marking extensions;
    \item \defterm{complicial} if it is in the cellular closure of the complicial horn inclusions and the elementary complicial marking extensions.
  \end{itemize}  
\end{Def}

There is also the $n$-trivial version of the (saturated) complicial model structure.

\begin{thm}\label{n-complicial-model-structure}
  The category $\mSet$ carries two model structures:
  \begin{enumerate}
  	\item the \defterm{$n$-trivial complicial model structure} characterized by the following properties:
  	\begin{itemize}
  		\item The cofibrations are the monomorphisms.
  		\item The set of
  		\begin{itemize}
  			\item complicial horn inclusions,
  			\item elementary complicial marking extensions of dimension $\le n+1$, and
  			\item markers of dimension $>n$
  		\end{itemize}
  		forms a pseudo-generating set of trivial cofibrations.
  	\end{itemize}
  	\item the \defterm{saturated $n$-trivial complicial model structure} characterized by the following properties:
  	\begin{itemize}
  		\item The cofibrations are the monomorphisms.
  		\item The set of
  		\begin{itemize}
  			\item complicial horn inclusions,
  			\item elementary complicial marking extensions of dimension $\le n+1$,
  			\item markers of dimension $>n$, and
  			\item saturation maps
  		\end{itemize}
  		forms a pseudo-generating set of trivial cofibrations.
  	\end{itemize}
  \end{enumerate}

\end{thm}

\begin{proof}
	Essentially the same as the proof of \cref{complicial-model-structure}, but combined with \cite[Ex.~104]{verity:weak-complicial-1}.
\end{proof}

In $\mSet$, the pseudo Gray tensor product is modelled by the cartesian product.
We will adopt the following notation from \cite{verity:weak-complicial-1,verity:weak-complicial-1} which emphasizes this view.

\begin{notation}
	The cartesian product on $\mSet$ (and its reflective subcategory $\PreComp$ described below) is denoted by $\pseudo$.
\end{notation}

Thus \cref{complicial-model-structure} in particular says that Verity's model structure is monoidal with respect to the pseudo Gray tensor product.

The following proposition will be useful later.

\begin{prop}\label{pseudo-Gray-of-entire-maps}
	Let $f\colon A \to X$ and $g\colon B \to Y$ be entire maps in $\mSet$.
	Then their Leibniz pseudo Gray tensor $f \hat \pseudo g$ is a complicial marking extension.
\end{prop}
\begin{proof}
	Since the forgetful functor $\mSet \to \sSet$ preserves colimits and products, $f \hat \pseudo g$ is entire.
	We assume for the sake of simplicity that $f \hat \pseudo g$ is an inclusion.
	Let $(x,y)$ be an $n$-simplex that is marked in $X \pseudo Y$ but not in $\dom(f \hat \pseudo g) = (A \pseudo Y) \cup (X \pseudo B)$.
	Equivalently, $x$ is marked in $X$ but not in $A$, and $y$ is marked in $Y$ but not in $B$.
	Then we must have $n \ge 1$, so the $(n+1)$-simplex $z = (x   \sigma_0,y   \sigma_1)$ is well-defined.
	We claim that this simplex $z$ extends as indicated below:
	\[
	\begin{tikzcd}
	\Delta^{n+1}
	\arrow [r, "z"]
	\arrow [d] &
	(A \pseudo Y) \cup (X \pseudo B) \\
	\admdeltap{n+1}{1}
	\arrow [ur, dashed, "\exists", swap] &
	\end{tikzcd}
	\]
	To see that $z$ at least extends to $\admdelta{n+1}{1}$, let $\alpha \colon [m] \to [n+1]$ be a simplicial operator with 
	$0,1,2 \in \im \alpha$.
	Then both $x   (\sigma_0 \circ \alpha)$ and $y   (\sigma_1 \circ \alpha)$ are degenerate, so $z   \alpha$ is marked in $\dom(f \hat \pseudo g)$.
	Since the face $z   \faces 0 = \bigl(x, y   (\faces 0 \circ \sigma_0)\bigr)$ is marked in $X \pseudo B$ and the face $z   \faces 2 = \bigl(x  (\faces 1 \circ \sigma_0\bigr),y)$ is marked in $A \pseudo Y$, we indeed have an extension as indicated.
	Therefore we have a pushout square
	\[
	\begin{tikzcd}
	\coprod \admdeltap{n+1}{1}
	\arrow [r]
	\arrow [d] &
	(A \pseudo Y) \cup (X \pseudo B)
	\arrow [d] \\
	\coprod \admdeltapp{n+1}{1}
	\arrow [r] &
	X \pseudo Y
	\end{tikzcd}
	\]
	where the coproducts are taken over all $n$-simplices $(x,y)$ that are marked in $X \pseudo Y$ but not in $\dom(f \hat \pseudo g)$, and both horizontal maps are induced by the simplices of the form $(x   \sigma_0,y   \sigma_1)$.
	This completes the proof.
\end{proof}

\begin{Def}
	Let $[n] \in \Delta$ and let $0 \leq p,q \leq n$ be such that $p+q = n$.
	Then we write $\frontface p q \colon [p] \to [n]$ for the simplicial operator $i \mapsto i$, and $\backface p q \colon [q] \to [n]$ for the operator $i \mapsto p+i$.
\end{Def}

%

In the following definition, we use slightly different terminology from Verity's original one \cite[Def.~127 \& Def.~128]{verity:complicial}.

\begin{Def}
	Let $X,Y \in \mSet$, let $(x,y) \in X_n \times Y_n$ be a simplex of $X \times Y$, and let $0 \leq i \leq n$. We say that $(x,y)$ is \defterm{$i$-cloven} if \emph{either} $x   \frontface i {n-i}$ is marked in $X$ or $y   \backface i {n-i}$ is marked in $Y$. We say that $(x,y)$ is \defterm{fully cloven} if it is $i$-cloven for all $0 \leq i \leq n$.
	
	The \defterm{Gray tensor product} of $X$ and $Y$, denoted $X \otimes Y$, is defined to be the marked simplicial set with underlying simplicial set $X \times Y$, where a simplex $(x,y) \in X_n \times Y_n$ is marked if and only if it is fully cloven.
\end{Def}

\begin{thm}[{\cite[Lem.~131]{verity:complicial}}]\label{thm:gray}
	The Gray tensor product endows $\mSet$ with a (nonsymmetric) monoidal structure, such that the forgetful functor $(\mSet, \otimes) \to (\sSet, \times)$ is strict monoidal.
\end{thm}

\begin{Def}
A \defterm{pre-complicial set} is a marked simplicial set $X$ with the right lifting property with respect to the complicial marking extensions. These form a reflective subcategory of $\mSet$ which we will denote $\PreComp$. We will denote the localization functor $X \mapsto X^\precomp$.
\end{Def}

\begin{prop}\label{unit}
	The unit of the reflection $X \to X^\precomp$ is a complicial marking extension for any $X \in \mSet$.
\end{prop}
\begin{proof}
	Obtain a complicial marking extension $f \colon X \to Y$ into a pre-complicial set $Y$ by applying the small object argument to the unique map $X \to 1$ with respect to the elementary complicial marking extensions.
	Then any map $X \to Z$ into a pre-complicial set $Z$ factors through $f$.
	Moreover, since $f$ is an epimorphism, such a factorisation is necessarily unique.
	In other words, $f$ has the universal property of the unit $X \to X^\precomp$.
\end{proof}

\begin{thm}\label{precomplicial-model-structure}
	For each of the four model structures in \cref{complicial-model-structure,n-complicial-model-structure}, the category $\PreComp$ carries an analogous model structure characterized by the following conditions:
	\begin{itemize}
		\item The cofibrations are the monomorphisms.
		\item A pseudo-generating set of trivial cofibrations can be obtained by taking that for the corresponding model structure on $\mSet$ (described in \cref{comical-model-structure} or \cref{n-complicial-model-structure}), removing the complicial marking extensions, and then applying the pre-complicial reflection.
	\end{itemize}
The localization $(-)^\precomp$ is a left Quillen equivalence between the complicial model structures (resp. the $n$-trivial complicial model structures).
\end{thm}

\begin{proof}
Fix one of the four model structures on $\mSet$.
Observe that, if we factor a map between pre-complicial sets into a cofibration followed by a fibration (one of which is trivial) with respect to that model structure, then the middle object must also be pre-complicial.
Thus we obtain a restricted model structure on $\PreComp$.

We obtain the characterisation of its cofibrations by observing that the reflective inclusion $\PreComp \incl \mSet$ preserves and reflects monomorphisms.
It is straightforward to check that the pre-complicial reflection preserves pseudo-generating sets, and moreover it inverts all (elementary) complicial marking extensions.

%
%

It follows that the adjunction $\mSet \rightleftarrows \PreComp$ is a Quillen adjunction.
Finally, since the unit is a natural weak equivalence this is in fact a Quillen equivalence.
\end{proof}

\begin{rmk}
	We believe that the pre-complicial reflection does not actually affect the remaining members of the pseudo-generating set.
	However we do not provide a proof as it is not essential.
\end{rmk}

Now we analyse the pre-complicial reflection of the Gray tensor product on $\PreComp$.

\begin{Def}
	We write $\pretensor \colon \PreComp \times \PreComp \to \PreComp$ for the pre-complicial Gray tensor product functor $(X,Y) \mapsto (X\otimes Y)^\precomp$.
\end{Def}

\begin{thm}\label{PreComp-monoidal-model}
	The bifunctor $\pretensor$ is part of a biclosed monoidal structure on $\PreComp$.
Moreover each model structure on $\PreComp$ described in \cref{precomplicial-model-structure} is monoidal with respect $\pretensor$.
\end{thm}
\begin{proof}
The first assertion is \cite[Thm.~148]{verity:complicial}.
It is straightforward to check that the Leibniz Gray tensor product preserves monomorphisms.
Since the complicial model structure on $\mSet$ is monoidal with respect to the Gray tensor product \cite[Thm.~109]{verity:weak-complicial-1} (although it is not biclosed on $\mSet$) and the unit of the pre-complicial reflection is a levelwise complicial marking extension, it follows that the complicial model structure on $\PreComp$ is monoidal.
The $n$-trivial and saturated versions follow from \cite[\textsection 2]{ozornova-rovelli:model-structure}.
\end{proof}



\section{Marked cubical sets and Gray tensor products} \label{sec:cubical}

In this section, we introduce marked cubical sets and define their Gray tensor product.

\subsection{Marked cubical sets}
In order to define marked cubical sets, we need to introduce certain enlargement $\Cube^+$ of the box category.
The objects of $\Cube^+$ consist of: $[1]^n$ for every $n \geq 0$ and $[1]^n_e$ for every $n \geq 1$.
The morphisms of $\Cube^+$ are generated by the maps
\begin{itemize}
	\item[-] $\partial^n_{i, \varepsilon} \colon [1]^{n-1} \to [1]^n$ for every $n \geq 1$, $i = 1, \ldots, n$, and $\varepsilon = 0, 1$,
	\item[-] $\sigma^n_i \colon [1]^n \to [1]^{n-1}$ for $n \geq 1$ and $i = 1, \ldots, n$,
	\item[-] $\gamma^n_{i,\varepsilon} \colon [1]^n \to [1]^{n-1}$ for $n \geq 2$, $i = 1, \ldots, n-1$, and $\varepsilon = 0, 1$,
	\item[-] $\varphi^n \colon [1]^n \to [1]^n_e$ for $n \geq 1$,
	\item[-] $\zeta^n_i \colon [1]^n_e \to [1]^{n-1}$ for $n \geq 1$ and $i = 1, \ldots, n$,
	\item[-] $\xi^n_{i, \varepsilon} \colon [1]^n_e \to [1]^{n-1}$ for $n \geq 1$, $i = 1, \ldots, n$, and $\varepsilon = 0, 1$,
\end{itemize}
subject to the usual cubical identities and the following additional relations:
\begin{multicols}{2}
  $\zeta_i \varphi = \sigma_i$;

  $\xi_{i, \varepsilon} \varphi = \gamma_{i, \varepsilon}$;

  $\sigma_i \zeta_j = \sigma_j \zeta_{i+1}$ for $j \leq i$;

  $\gamma_{j, \varepsilon} \xi_{i, \delta} = \left\{ \begin{array}{ll}
	\gamma_{i, \delta} \xi_{j+1, \varepsilon} & \text{for } j > i\text{;} \\
	\gamma_{i, \delta} \xi_{i+1, \delta} & \text{for } j=i \text{ and } \delta = \varepsilon\text{;}
	\end{array}\right.$	

 $\sigma_j \xi_{i, \delta} =  \left\{ \begin{array}{ll}
	\gamma_{i-1, \delta} \zeta_j    & \text{for } j < i \text{;} \\
	\sigma_i \zeta_i                        & \text{for } j = i \text{;} \\
	\gamma_{i, \delta} \zeta_{j+1} & \text{for } j > i \text{.} 
	\end{array}\right.$
\end{multicols}

\begin{prop}\label{Cube+isReedy}
  The category $\Cube^+$ is an EZ Reedy category with the following Reedy structure:
  \begin{itemize}
    \item $\deg [1]^0 = 0$; $\deg [1]^n = 2n-1$ for $n \geq 1$; $\deg [1]^n_e = 2n$ for $n \geq 1$;
    \item $\Cube^+_-$ is generated by the maps: $\sigma^n_i$, $\gamma^n_{i, \varepsilon}$, $\zeta^n_i$, and $\xi^n_{i, \varepsilon}$;
    \item $\Cube^+_+$ is generated by the maps: $\partial^n_{i, \varepsilon}$ and $\varphi^n$.
  \end{itemize}
\end{prop}

The proof of this fact follows the one in \cite[App.~C]{ozornova-rovelli:model-structure}.
We begin by noting the following simple lemma:

\begin{lem} \label{Cube+ReedyHelper}\leavevmode
  \begin{enumerate}
    \item\label{Cube+ReedyHelperMinus} The are no non-identity maps in $\Cube^+_-$ whose target is in $\Cube^+ \setminus \Cube$.
    \item\label{Cube+ReedyHelperPlus} The are no non-identity maps in $\Cube^+_+$ whose source is in $\Cube^+ \setminus \Cube$. \qed
  \end{enumerate}
\end{lem}

\begin{proof}[Proof of \cref{Cube+isReedy}]
  We first note that the sections of $\zeta_i$ are $\varphi \partial_{i,0}$ and $\varphi \partial_{i,1}$; the sections of $\xi_{i, 1}$ are $\varphi \partial_{i,0}$ and $\varphi \partial_{i+1,0}$; and the sections of $\xi_{i, 0}$ are $\varphi \partial_{i,1}$ and $\varphi \partial_{i+1,1}$.
  Thus all maps in $\Cube^+_-$ have sections.
  
  Using the techniques of \cite[Thm.~5.1]{grandis-mauri}, we can then extend \cref{normal-form} to write normal forms for maps in $\Cube^+$.
  These are established separately for the four cases:
  \begin{enumerate}
    \item The normal form of a map of the form $[1]^m \to [1]^n$ is given by its normal form in $\Cube$.
    If such a form were to be non-unique, we would need to have $[1]^m \to [1]^k_e \to [1]^n$ with $[1]^k_e \to [1]^n \in \Cube^+_+$, which is impossible by \cref{Cube+ReedyHelper}(\ref{Cube+ReedyHelperPlus}).
    \item The normal form of a map $[1]^m \to [1]^n_e$ is obtained by observing that it is necessarily a composite $[1]^m \to [1]^n \overset\varphi\to [1]^n_e$ and taking the normal form of the first map in $\Cube$.
    Again, \cref{Cube+ReedyHelper}(\ref{Cube+ReedyHelperPlus}) implies uniqueness.
    \item The normal form of a map of the form $[1]^m_e \to [1]^n$ is obtained by factoring it as $[1]^m_e \to [1]^{m-1} \to [1]^n$, where the first map is either $\zeta_i$ or $\xi_{i, \varepsilon}$, and taking the normal form of \cref{normal-form} of $[1]^{m-1} \to [1]^n$ in $\Cube$.
    Note that the choice of $\zeta_i$ or $\xi_{i, \varepsilon}$ as the first map may not be unique, but it can be made so by imposing the additional compatibility requirement with the factorization of \cref{normal-form} --- this is because of the additional relations relating $\zeta_i$'s to $\sigma_i$'s and $\xi_{i, \varepsilon}$'s to $\gamma_{i, \varepsilon}$'s.
    Put differently, we may precompose $[1]^m_e \to [1]^n$ with $\varphi$, use the normal form in $\Cube$, and replace the last element by $\zeta_i$ or $\xi_{i, \varepsilon}$ as appropriate.
    \item $[1]^m_e \to [1]^n_e$.
    In this case, we obtain the normal form is obtained by combining the techniques from the previous two cases, namely factoring
    \[ [1]^m_e \to [1]^{m-1} \to [1]^n \overset\varphi\to [1]^n_e\text{,} \]
    where again the first map is one of $\zeta_i$ or $\xi_{i, \varepsilon}$ and the composite $[1]^{m-1} \to [1]^k \to [1]^n$ is obtained in $\Cube$.
  \end{enumerate}
  Having established the normal forms, we then proceed in a manner analogous to the proof of \cref{Cube-EZ-Reedy}.   
\end{proof}

\begin{Def}
	A \emph{structurally marked cubical set} is a presheaf $X \colon (\Cube^+)^\op \to \Set$.
	A \emph{map of structurally marked cubical sets} is a natural transformation of such presheaves.
\end{Def}

Given a structurally marked cubical set $X$, we will write $X_n$ for $X([1]^n)$ and $eX_n$ for $X([1]^n_e)$.
Just as in the case of cubical sets, we adopt the convention of writing cubical operators on the right, e.g., for $x \in e X_1$, we write $x \varphi$ for the resulting element of $X_1$.

\begin{Def}
A \emph{marked cubical set} is a structurally marked cubical set $X \colon (\Cube^+)^\op \to \Set$ for which the map $X\varphi \colon eX_n \to X_n$ is a monomorphism for all $n \geq 1$.
We write $\mcSet$ for the full subcategory of $\Set^{(\Cube^+)^\op}$ spanned by the marked cubical sets.
\end{Def}

We think of a marked cubical set $X$ as a cubical set in which certain $n$-cubes have been designated as \emph{equivalences}, i.e., those in $eX_n \subseteq X_n$.
The maps $\zeta_i$ and $\xi_{i, \varepsilon}$ ensure that every degenerate cube is marked.

We may apply the same intuition to structurally marked cubical sets.
However, failure of $X \varphi$'s to be monomorphisms (in an arbitrary structurally marked cubical set $X$) means that being an equivalence is not a property of an $n$-cube of $X$, but a structure on it, as there can be multiple markings on a single cube.

Every (structurally) marked cubical set has an underlying cubical set, which defines a functor $\upsilon \colon \mcSet \to \cSet$.
Given a cubical set $X$, we can form a marked cubical set in two ways:
\begin{itemize}
  \item the  \emph{minimal marking} functor takes a cubical set $X$ to a marked cubical set $X^\flat$, where only degenerate $n$-cubes are marked;
  \item the \emph{maximal marking functor}, assigns to $X$ the marked cubical set $X^\sharp$ in which all cubes marked (i.e., all $X \varphi^n$'s are identities).
\end{itemize}
This gives two functors $(-)^\flat, (-)^\sharp \colon \cSet \to \mcSet$.
A striaghtforward verification shows:

\begin{prop}
  We have the following string of adjoint functors $(-)^\flat \dashv \upsilon \dashv (-)^\sharp$. \qed
\end{prop}

\begin{rmk}[Limits and colimits of marked cubical sets]\label{colim-in-mcSet}
   The proposition above gives a recipe for computing limits and colimits of diagrams $F \colon \scrI \to \mcSet$.
   In both cases, we first compute the underlying cubical set by taking the (co)limit of $\upsilon F$ in $\cSet$, and then equipping it with the minimal marking making the colimit inclusions maps of marked cubical sets, or the maximal marking making the limit projections maps of marked cubical sets.
   It follows, e.g., that a cube in a colimit is marked if and only if it is in the image of a marked cube under one of the colimit inclusions.
\end{rmk}

Furthermore, the canonical embedding $\mcSet \hookrightarrow \Set^{({\Cube^+})^\op}$ of marked cubical sets into structurally marked cubical sets admits a left adjoint, denoted $\Im \colon \Set^{({\Cube^+})^\op} \to \mcSet$.
Explicitly, $\Im X$ is obtained by factoring all $\varphi_n$'s via their image $eX_n \to (eX_n)\varphi_n \to X_n$ and taking the resulting object as a new set of marked $n$-cubes.
We may summarize it with the following statement:

\begin{prop}
  Marked cubical sets form a reflective subcategory of the structurally marked cubical sets with the reflector given by $\Im \colon \Set^{({\Cube^+})^\op} \to \mcSet$. \qed
\end{prop}

\begin{cor}
  The category $\mcSet$ of marked cubical sets is locally presentable. \qed
\end{cor}

\begin{Def}
  A map $f \colon X \to Y$ of marked cubical sets is:
  \begin{itemize}
    \item \emph{regular} if it creates markings, i.e., for an $n$-cube $x$ of $X$ we have: $x \in eX_n$ if and only if $f(x) \in eY_n$;
    \item \emph{entire} if the induced map between the underlying cubical sets is invertible.
  \end{itemize}
\end{Def}

\begin{Def}
We say that $X \in \mcSet$ is \defterm{$n$-trivial} if every cube of dimension $\geq n+1$ is marked.
\end{Def}

Given a marked cubical set $X$, we will write $\core n X $ for its maximal $n$-trivial subset.
In other words, the $k$-cubes of $\core n X$ are precisely those $k$-cubes $x$ such that $x\alpha$ is marked for all $\alpha \colon [1]^m \to [1]^k$ with $m > n$.
This assignment extends to a functor ${\core n} \colon \mcSet \to \mcSet$, which admits a left adjoint $\trunc n \colon \mcSet \to \mcSet$.
Explicitly, $\trunc n$ acts as the identity on the underlying cubical set and a $k$-cube is marked in $\trunc n X$ if either $k \leq n$ and $x$ is marked in $X$ or $k \geq n+1$.

When a cubical set is considered as a marked cubical set, it will \emph{almost} always be considered with its minimal marking.
The only exception is the open boxes; see \cref{sec:model}.
We denote $\cube n = (\cube n)^\flat$ the $n$-cube regarded as a marked cubical set and likewise $\cbdy n = (\cbdy n)^\flat$.
Just as in the case of cubical sets, we call the inclusion map $\cbdy n \to \cube n$ the \defterm{boundary inclusion}. 
We denote $\mcube n = \trunc {n-1} (\cube n)$ the $n$-cube with the non-degenerate $n$-cube marked and no other non-degenerate cubes marked.
We call the canonical map $\cube n \to \mcube n$ the \defterm{$n$-marker}.

\begin{prop}
  The monomorphisms of $\mcSet$ (and $\Set^{(\Cube^+)^\op}$) are the cellular closure of the set
  \[ \{ \partial \Cube^n \hookrightarrow \Cube^n \ | \ n \geq 0 \}
  \cup \{ \Cube^n \hookrightarrow \mcube n \ | \ n \geq 1 \}\text{.} \]
\end{prop}

The functors $(-)^\co, (-)^\coop, (-)^\op \colon \cSet \to \cSet$ generalize to the marked setting in the straightforward manner.
For $(-)^\co$ we send $\varphi^n$ to itself, $\zeta^n_i$ to $\zeta^n_{n+1-i}$, and $\xi^{n}_{i, \varepsilon}$ to $\xi^n_{n+1-i , \varepsilon}$.
For $(-)^\coop$, we send $\varphi^n$ and $\zeta^n_i$ to themselves and $\xi^{n}_{i, \varepsilon}$ to $\xi^n_{i , 1-\varepsilon}$.
These then induce functors by precomposition $(-)^\co, (-)^\coop, (-)^\op \colon \mcSet \to \mcSet$.


\subsection{Gray tensor products}

The following definition makes use of \cref{non-deg-cubes-in-Gray}.

\begin{Def}\label{lax-def}
	The (\emph{lax}) \emph{Gray tensor product} $X \otimes Y$ of two marked cubical sets $X,Y \in \mcSet$ is the geometric product $\upsilon X \otimes \upsilon Y$ wherein a non-degenerate cube $x \otimes y$ is marked if and only if either $x$ is marked in $X$ or $y$ is marked in $Y$.
	This extends to a functor $\otimes \colon \mcSet \times \mcSet \to \mcSet$ in the obvious way.
\end{Def}
\begin{Def}\label{pseudo-def}
	The \emph{pseudo Gray tensor product} $X \pseudo Y$ is the geometric product $\upsilon X \otimes \upsilon Y$ wherein a non-degenerate cube $x \otimes y$ is unmarked if and only if:
	\begin{itemize}
		\item $x$ is a 0-cube and $y$ is unmarked in $Y$; or
		\item $x$ is unmarked in $X$ and $y$ is a 0-cube.
	\end{itemize}
	This extends to a functor $\pseudo \colon \mcSet \times \mcSet \to \mcSet$ in the obvious way.
\end{Def}
\begin{rmk}\label{comparing-Gray-tensors}
	Since no 0-cubes are marked, one can easily check that $X \pseudo Y$ may be obtained from $X \otimes Y$ by marking those non-degenerate $x \otimes y$ such that $x \in X_m$, $y \in Y_n$ with $m,n \ge 1$.
	Thus the identity at $\upsilon X \otimes \upsilon Y$ lifts to an entire map $\mu_{X,Y} \colon X \otimes Y \to X \pseudo Y$.
	This map is clearly natural in $X$ and $Y$, and moreover $\mu_{X,Y}$ is invertible if either $X$ or $Y$ is 0-trivial.
\end{rmk}

\begin{rmk}
	The Gray tensor products $\otimes$ and $\pseudo$ share many properties and often a statement or a proof applies equally well to both tensor products.
	In such situations, we write $\odot$ to mean either.
	Of course the interpretation of $\odot$ is to be kept consistent within each statement and its proof.
\end{rmk}

\begin{thm} \label{GrayFundamentalProp}\leavevmode
	\begin{enumerate}
		\item The Gray tensor product $\odot$ forms part of a biclosed monoidal structure on $\mcSet$ such that the forgetful functor $\upsilon \colon (\mcSet,\odot) \to (\cSet,\otimes)$ is strict monoidal.
		\item The entire inclusions $\mu_{X,Y} \colon X \otimes Y \to X \pseudo Y$ together with $\mu_0 = \id_{\cube 0}$ equip the identity functor with a monoidal structure $(\id_{\mcSet},\mu) \colon (\mcSet, \pseudo) \to (\mcSet,\otimes)$.
		\item The minimal marking functor $(-)^\flat \colon (\cSet,\otimes) \to (\mcSet, \otimes)$ is strict monoidal.
		\item The maximal marking functor $(-)^\sharp \colon (\cSet,\otimes) \to (\mcSet,\odot)$ is strict monoidal.
	\end{enumerate}
\end{thm}

\begin{proof}
	We first check the associativity of the tensor product.
	Suppose we are given non-degenerate cubes $x \in X_m$, $y \in Y_n$, $z \in Z_k$ in $X,Y,Z \in \mcSet$.
	Then the $(m+n+k)$-cube $(x \otimes y) \otimes z$ in $(X \odot Y) \odot Z$ is unmarked if and only if:
	\begin{itemize}
		\item [($\otimes$)] none of $x,y,z$ is marked; or
		\item [($\pseudo$)] (at least) two of $x,y,z$ are 0-cubes and the last is unmarked.
	\end{itemize}
	One can give a similar characterization of when $x \otimes (y \otimes z)$ is unmarked, and it follows that the associativity isomorphism $(\upsilon X \otimes \upsilon Y) \otimes \upsilon Z \cong \upsilon X \otimes (\upsilon Y \otimes \upsilon Z)$ in $\cSet$ lifts to an isomorphism $(X \odot Y) \odot Z \cong X \odot (Y \odot Z)$ in $\mcSet$.
	The unit isomorphisms can be lifted similarly, and moreover these lifted isomorphisms are suitably natural and coherent.
	Thus we indeed obtain a monoidal structure on $\mcSet$ such that $\upsilon$ is strict monoidal.
	The clauses (2-4) are then obvious from the definitions of the tensor products.
	
	It remains to show that this monoidal structure is biclosed.
	Equivalently, we must show that $\odot$ preserves colimits in each variable separately.
	So let $F \colon \scrI \to \mcSet$ and $X \in \mcSet$.
	Since the geometric product is cocontinuous in each variable and $\upsilon$ is cocontinuous and strict monoidal, the canonical comparison map
	\[
	\colim(X \odot F) \to X \odot \colim F
	\]
	is $\upsilon$-invertible.
	Moreover one can check using \cref{colim-in-mcSet} that a non-degenerate cube in either side is marked if and only if it is the image of some marked cube under the canonical map from $X \odot Fi$ for some $i \in \scrI$.
	It follows that this comparison map is invertible.
	Dually, $(-)\odot X$ preserves colimits.
	Since $\mcSet$ is locally finitely presentable, the existence of the desired biclosed structure now follows.
\end{proof}

\begin{lem}\label{Gray-tensor-of-monos}
	Let $f\colon A \to X$ and $g \colon B \to Y$ be monomorphisms in $\mcSet$.
	Then $f \hat \odot g$ is again a monomorphism.
	Moreover:
	\begin{enumerate}
		\item if both $f$ and $g$ are regular then so is $f \hat \odot g$;
		\item if either $f$ or $g$ is entire then so is $f \hat \odot g$;
		\item if both $f$ and $g$ are entire then $f \hat \odot g$ is invertible; and
		\item if either $f$ or $g$ is entire then the square
		\[
		\begin{tikzcd}[row sep = large]
		(X \otimes B) \cup (A \otimes Y)
		\arrow [r]
		\arrow [d, swap, "f \hat \otimes g"] &
		(X \pseudo B) \cup (A \pseudo Y)
		\arrow [d, "f \hat \pseudo g"] \\
		X \otimes Y
		\arrow [r, "\mu_{X,Y}"] &
		X \pseudo Y
		\end{tikzcd}
		\]
		is a pushout in $\mcSet$ where the upper horizontal map is induced by $\mu$.
	\end{enumerate}
\end{lem}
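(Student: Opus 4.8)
The plan is to reduce every clause to a statement about underlying cubical sets together with an explicit description of markings. Since $\upsilon$ is cocontinuous and strict monoidal by \cref{GrayFundamentalProp}, we have $\upsilon(f \hat\odot g) = \upsilon f \hat\otimes \upsilon g$, which is a monomorphism of cubical sets by the standard fact that the geometric product — hence also the pushout-product it induces — carries monomorphisms to monomorphisms; as $\upsilon$ is faithful, $f \hat\odot g$ is a monomorphism. Its domain $(A \odot Y) \cup_{A \odot B} (X \odot B)$ may therefore be regarded as a subobject of $X \odot Y$ in $\mcSet$, with underlying cubical set $(\upsilon A \otimes \upsilon Y) \cup (\upsilon X \otimes \upsilon B)$, and by \cref{colim-in-mcSet} a non-degenerate cube $x \otimes y$ of it is marked exactly when $x \otimes y$ is already marked in $A \odot Y$ or in $X \odot B$.

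The organising observation, read off \cref{lax-def}, \cref{pseudo-def} and \cref{comparing-Gray-tensors}, is that whether a non-degenerate $x \otimes y$ is marked in $X \odot Y$ depends only on the pair recording, for $x$ and for $y$ separately, whether that cube is $0$-dimensional, positive-dimensional but unmarked, or marked. A regular monomorphism preserves this datum — it preserves dimension and both preserves and reflects markings — while an entire monomorphism is the identity on underlying cubes and can only promote ``positive-dimensional but unmarked'' to ``marked''. Granting this, I would argue as follows. For \textbf{(1)}: given a non-degenerate $x \otimes y$ in the domain with, say, $x$ in the image of $f$, the datum of $x$ in $X$ agrees with that of its $f$-preimage in $A$, so $x \otimes y$ is marked in $A \odot Y$ iff it is marked in $X \odot Y$; the same reasoning applies when instead $y$ lies in the image of $g$, and combining the two cases shows $f \hat\odot g$ reflects markings, hence is regular. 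For \textbf{(2)}: if, say, $g$ is entire then $\upsilon X \otimes \upsilon B = \upsilon X \otimes \upsilon Y$, so $\upsilon(f \hat\odot g)$ is invertible and $f \hat\odot g$ is entire; the case of $f$ entire is symmetric. For \textbf{(3)}: both being entire, $f \hat\odot g$ is entire by (2), so it remains to check it is surjective on marked cubes, i.e.\ that every marked non-degenerate $x \otimes y$ in $X \odot Y$ is still marked in $A \odot Y$ or in $X \odot B$ — a finite check on the datum-pair. For \textbf{(4)}: once one of $f,g$ is entire every object in the square has underlying cubical set $\upsilon X \otimes \upsilon Y$ and every map in it is invertible on underlying cubical sets, so the underlying square is a pushout for formal reasons; by \cref{colim-in-mcSet} it then suffices to verify that a non-degenerate $x \otimes y$ is marked in $X \pseudo Y$ iff it is marked in $X \otimes Y$ or in $(A \pseudo Y) \cup (X \pseudo B)$, again a finite check on the datum-pair.

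I expect the delicate step to be \textbf{(3)} — and, in disguise, the marking computation inside \textbf{(4)}: one must confirm that when $x \otimes y$ is marked in $X \odot Y$, the possibly weaker markings surviving on $A$ and on $B$ still suffice to mark it in $A \odot Y$ or in $X \odot B$. For $\otimes$ this is immediate, since $x$ marked in $X$ already marks $x \otimes y$ in $X \otimes B$, and dually. For $\pseudo$ I would run through the finitely many marked configurations of the datum-pair, the operative point being that a marked cube is automatically positive-dimensional, so a lost marking can always be absorbed by a positive dimension — this is precisely why, in \textbf{(4)}, a cube marked in $X \pseudo Y$ but not in $X \otimes Y$ must have both factors positive-dimensional and is therefore already marked in $A \pseudo Y$ or in $X \pseudo B$. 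The main effort would go into stating this case analysis compactly rather than as a brute-force table.
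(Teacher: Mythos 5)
Your proposal is correct and follows essentially the same route as the paper: identify $\upsilon(f \hat\odot g)$ with the underlying cubical pushout-product and then compare markings cube-by-cube using \cref{colim-in-mcSet} together with the explicit descriptions in \cref{lax-def}, \cref{pseudo-def} and \cref{comparing-Gray-tensors}. The only difference is organisational: you treat both tensors uniformly via the three-state datum ($0$-cube / positive-dimensional unmarked / marked) and prove the $\pseudo$ cases of (1) and (3) directly, whereas the paper deduces them from the corresponding $\otimes$ cases together with the comparison map $\mu$ and the square in (4).
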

\begin{proof}
	Since a map in $\mcSet$ is a monomorphism if and only if its underlying map in $\cSet$ is a monomorphism, the first (un-numbered) assertion follows from \cref{geom-prod-of-monos}, \cref{GrayFundamentalProp}(1) and the cocontinuity of $\upsilon$.
	We will assume for the sake of simplicity that $f \hat \odot g$ is an inclusion.
	
	(1$\otimes$)
	Let $x \otimes y$ be a non-degenerate cube in $\dom(f \hat \otimes g)$.
	By duality, we may assume that $x$ is in $A$.
	If $x \otimes y$ is marked in $X \otimes Y$, then either $x$ is marked in $X$ or $y$ is marked in $Y$.
	It follows (by the regularity of $f$ in the former case) that $x \otimes y$ is marked in $A \otimes Y$.
	This shows that $f \hat \otimes g$ is regular.
	
	(1$\pseudo$)
	Let $x \otimes y$ be a marked non-degenerate cube in $X \pseudo Y$.
	Suppose that $x \otimes y$ is in the image of $f \hat \pseudo g$.
	The case (1$\otimes$) combined with the commutativity of the square in (4) imply that if $x \otimes y$ is marked in $X \otimes Y$ then it is also marked in $\dom(f \hat \pseudo g)$.
	Thus by \cref{comparing-Gray-tensors}, it suffices to consider the case where $x \in X_m$ and $y \in Y_n$ for some $m,n \ge 1$.
	But in this case $x \otimes y$ is marked in $\dom(f \hat \pseudo g)$ by the definition of $\pseudo$.
	
	(2)
	Since $\upsilon$ preserves colimits, we have $\upsilon(f \hat \odot g) \cong \upsilon f \hat \odot \upsilon g$.
	Thus this assertion follows from the fact that the pushout of an isomorphism along any map is itself an isomorphism.
	
	(3$\otimes$)
	We know from (2) that $f \hat \otimes g$ is entire, so it suffices to show that this map is also regular.
	Let $x \otimes y$ be a marked non-degenerate cube in $X \otimes Y$.
	Then either $x$ is marked in $X$ or $y$ is marked in $Y$.
	The cube $x \otimes y$ is then marked in $X \otimes B$ in the first subcase and it is marked in $A \otimes Y$ in the second subcase.
	Thus $f \hat \otimes g$ is indeed regular.
	
	(4)
	By (2), each map in this square is entire.
	Thus its image under $\upsilon$ is trivially a pushout in $\cSet$.
	Moreover, for each of the horizontal maps, \cref{comparing-Gray-tensors} implies that the codomain is obtained from the domain by marking those cubes $x \otimes y$ such that $x \in X_m$ and $y \in Y_n$ with $m,n \ge 1$.
	Now the assertion follows by \cref{colim-in-mcSet}.
	
	(3$\pseudo$)
	This case follows from (3$\otimes$) and (4).
\end{proof}

\begin{prop}\label{Gray-tensor-of-bdry}
	For any $m,n \ge 0$, the Leibniz Gray tensor product $(\partial \cube m \incl \cube m) \hat \otimes (\partial \cube n \incl \cube n)$ of boundary inclusions in $\mcSet$ is isomorphic to $\partial \cube{m+n} \incl \cube{m+n}$.
\end{prop}
\begin{proof}
	This is a straightforward consequence of \cref{geom-prod-of-bdry}, \cref{GrayFundamentalProp}(3) and the cocontinuity of $(-)^\flat$.
\end{proof}



\section{Model structure for comical sets} \label{sec:model}

In this section, we construct two families of model structures on the category $\mcSet$ of marked cubical sets.
The former of those has as its fibrant objects (\emph{saturated}) \emph{comical sets}, which we will define, and it is our tentative model for the theory of weak $\omega$-categories.
The fibrant objects of the latter are the $n$-trivial comical sets, and it is our tentative model for the theory of $(\infty,n)$-categories.

	A comical set is to be thought of as a kind of weak $\omega$-category, and an $n$-cube therein represents an $n$-dimensional morphism.
	The $(n-1)$-source of such an $n$-cube is the ``composite'' of the faces $\face{k}{\varepsilon}$ with $k+\varepsilon$ odd, and similarly the $(n-1)$-target is given by the even faces.
	(This idea of parity-based decomposition into source and target goes back to \cite{street:algebra-of-oriented-simplexes} where Street considers the free $\omega$-categories on simplices.
	In the case of cubes, see e.g., \cite{aitchison:cubes,street:parity-complexes,steiner:algebra-of-directed-complexes,al-agl-brown-steiner}).
	For instance, a $2$-cube can be seen as a morphism of the form: 
	\[
	\begin{tikzpicture}[baseline = 15]
	\filldraw
	(0,0) circle [radius = 1pt]
	(1.2,0)  circle [radius = 1pt]
	(0,1.2)  circle [radius = 1pt]
	(1.2,1.2)  circle [radius = 1pt];
	\draw[double, ->] (0.4,0.4) -- (0.8,0.8);
	
	\draw[->] (0.2,0) -- (1,0);
	\draw[->] (0.2,1.2) -- (1,1.2);
	\draw[->] (0,1) -- (0,0.2);
	\draw[->] (1.2,1) -- (1.2,0.2);
	\node[scale = 0.8] at (-0.3,0.6) {$\partial_{1,0}$};
	\node[scale = 0.8] at (0.6,1.4) {$\partial_{2,0}$};
	\node[scale = 0.8] at (1.5,0.6) {$\partial_{1,1}$};
	\node[scale = 0.8] at (0.6,-0.2) {$\partial_{2,1}$};
	\end{tikzpicture}
	\]
	and a $3$-cube represents a morphism between the following composites:
	\[
	\begin{tikzpicture}[baseline = -2, scale = 0.7]
	\filldraw
	(150:2) circle [radius = 1.6pt]
	(90:2) circle [radius = 1.6pt]
	(30:2) circle [radius = 1.6pt]
	(-30:2) circle [radius = 1.6pt]
	(-90:2) circle [radius = 1.6pt]
	(-150:2) circle [radius = 1.6pt]
	(0,0) circle [radius = 1.6pt];
	
	\draw[{->}] (135:1.8) -- (105:1.8);
	\draw[{->}] (75:1.8) -- (45:1.8);
	\draw[->] (15:1.8) -- (-15:1.8);
	\draw[{->}] (165:1.8) -- (-165:1.8);	
	\draw[{->}] (-135:1.8) --(-105:1.8);
	\draw[{->}] (-75:1.8) -- (-45:1.8);
	
	\draw[->] (150:1.5) -- (150:0.5);
	\draw[->] (30:0.5) -- (30:1.5);
	\draw[->] (-90:0.5) -- (-90:1.5);
	
	\node[scale=0.8] at (-0.5,-1) {$\partial_{1,0}$};
	\node[scale=0.8] at (-0.7,1) {$\partial_{3,0}$};
	\node[scale=0.8] at (1.2,0.1) {$\partial_{2,1}$};
	
	\draw[->,double] (-150:1) + (-0.3,-0.3) --+ (0.3,0.3);
	\draw[->,double] (90:1) + (-0.3,-0.3) --+ (0.3,0.3);
	\draw[->,double] (-30:1) + (-0.3,-0.3) --+ (0.3,0.3);
	\end{tikzpicture}
	\hspace{10pt}
	\begin{tikzpicture}[baseline = -2]
	\draw
	(0,0) -- (0.7,0)
	(0,0.05)--(0.65,0.05)
	(0,-0.05)--(0.65,-0.05)
	(0.55,0.15)--(0.7,0)--(0.55,-0.15);
	\end{tikzpicture}
	\hspace{10pt}
	\begin{tikzpicture}[baseline = -2, scale = 0.7]
	\filldraw
	(150:2) circle [radius = 1.6pt]
	(90:2) circle [radius = 1.6pt]
	(30:2) circle [radius = 1.6pt]
	(-30:2) circle [radius = 1.6pt]
	(-90:2) circle [radius = 1.6pt]
	(-150:2) circle [radius = 1.6pt]
	(0,0) circle [radius = 1.6pt];
	
	\draw[{->}] (135:1.8) -- (105:1.8);
	\draw[{->}] (75:1.8) -- (45:1.8);
	\draw[->] (15:1.8) -- (-15:1.8);
	\draw[{->}] (165:1.8) -- (-165:1.8);	
	\draw[{->}] (-135:1.8) --(-105:1.8);
	\draw[{->}] (-75:1.8) -- (-45:1.8);

	\draw[->] (90:1.5) -- (90:0.5);
	\draw[->] (-150:1.5) -- (-150:0.5);
	\draw[->] (-30:0.5) -- (-30:1.5);
	
	\node[scale=0.8] at (0.5,1) {$\partial_{1,1}$};
	\node[scale=0.8] at (0.7,-1) {$\partial_{3,1}$};
	\node[scale=0.8] at (-1.2,-0.1) {$\partial_{2,0}$};
	
	\draw[->,double] (150:1) + (-0.3,-0.3) --+ (0.3,0.3);
	\draw[->,double] (-90:1) + (-0.3,-0.3) --+ (0.3,0.3);
	\draw[->,double] (30:1) + (-0.3,-0.3) --+ (0.3,0.3);
	\end{tikzpicture}
	\]
	Marked $n$-cubes are to be thought of as being (weakly) invertible, although not every invertible cube is marked unless the comical set is saturated.

Before defining comical sets, we will need a few auxiliary definitions.

For $n \geq 1$, $1 \leq k  \leq n$, and $\varepsilon \in \{0,1\}$, we denote $\admcube n k \varepsilon$ the $n$-cube with the following marking: a non-degenerate cube $\face{k_1}{\varepsilon_1}\dots\face{k_t}{\varepsilon_t}$, written in the form specified by \cref{normal-form}, is marked whenever this string does not contain $\face{k-1}{\varepsilon}$, $\face{k}{\varepsilon}$, $\face{k}{1-\varepsilon}$, or $\face{k+1}{\varepsilon}$.
(This is exactly the marking described in \cite[Ex.~2.9]{steiner:cubical-nerve}.)
We call this the \defterm{$(k,\varepsilon)$-comical $n$-cube}.
We denote $\obox n k \varepsilon \subset \admcube n k \varepsilon$ the $(k,\varepsilon)$-open box of dimension $n$ (i.e., the cubical subset missing the non-degenerate $n$-cube and the $(k,\varepsilon)$th $(n-1)$-face) endowed with the marking making it a regular subset of $\admcube n k \varepsilon$.
We call $\obox n k \varepsilon$ the \defterm{comical $(k,\varepsilon)$-open box of dimension $n$}.
We call the inclusion $\obox n k \varepsilon \to \admcube n k \varepsilon$ the \defterm{$(k,\varepsilon)$-comical open box inclusion of dimension $n$}.

The \defterm{elementary $(k,\varepsilon)$-comical marking extension of dimension $n$}, denoted by $\admcubep n k \varepsilon \to \admcubepp n k \varepsilon$, is the Leibniz product of the unit $\id \to \trunc{n-2}$ and the comical box inclusion $\obox n k \varepsilon \to \admcube n k \varepsilon$, i.e., the dashed map in:
\[
\begin{tikzcd}[row sep = large]
\obox n k \varepsilon
\arrow [d]
\arrow [r]
\arrow [dr, phantom, "\mathrm{p.o.}"] &
\trunc{n-2}\obox n k \varepsilon
\arrow [d]
\arrow [ddr, bend left] & \\
\admcube n k \varepsilon
\arrow [r]
\arrow [drr, bend right]&
\cdot
\arrow [dr, dashed] & \\
& & \trunc{n-2}\admcube n k \varepsilon
\end{tikzcd}
\]

For each $x,y \in \{\nearrow, \swarrow\}$, we define the \defterm{basic Rezk map} $\rezkd x y \incl \rezkc x y$ as the entire inclusion depicted below:
\begin{align*}
\rezkd{\nearrow}{\nearrow} &= 
\left\{\begin{tikzpicture}[baseline = 12]
\foreach \x in {0,1,2}
\filldraw (\x,0) circle [radius = 1pt] (\x,1) circle [radius = 1pt];
\draw[->] (0.2,1) -- (0.8,1);
\draw[->] (1,0.8) -- (1,0.2);
\draw[->] (1.2,0) -- (1.8,0);
\draw[->, line width = 1.3pt] (0,0.8) -- (0,0.2);
\draw[->, line width = 1.3pt] (0.2,0) -- (0.8,0);
\draw[->, line width = 1.3pt] (1.2,1) -- (1.8,1);
\draw[->, line width = 1.3pt] (2,0.8) -- (2,0.2);
\draw[->, double, line width = 1.3pt] (0.3,0.3) -- (0.7,0.7);
\draw[->, double, line width = 1.3pt] (1.3,0.3) -- (1.7,0.7);
\end{tikzpicture}\right\} &
\rezkc{\nearrow}{\nearrow} &= 
\left\{\begin{tikzpicture}[baseline = 12]
\foreach \x in {0,1,2}
\filldraw (\x,0) circle [radius = 1pt] (\x,1) circle [radius = 1pt];
\draw[->, line width = 1.3pt] (0.2,1) -- (0.8,1);
\draw[->, line width = 1.3pt] (1,0.8) -- (1,0.2);
\draw[->, line width = 1.3pt] (1.2,0) -- (1.8,0);
\draw[->, line width = 1.3pt] (0,0.8) -- (0,0.2);
\draw[->, line width = 1.3pt] (0.2,0) -- (0.8,0);
\draw[->, line width = 1.3pt] (1.2,1) -- (1.8,1);
\draw[->, line width = 1.3pt] (2,0.8) -- (2,0.2);
\draw[->, double, line width = 1.3pt] (0.3,0.3) -- (0.7,0.7);
\draw[->, double, line width = 1.3pt] (1.3,0.3) -- (1.7,0.7);
\end{tikzpicture}\right\}\\
\rezkd{\nearrow}{\swarrow} &= 
\left\{\begin{tikzpicture}[baseline = 12]
\foreach \x in {0,1,2}
\filldraw (\x,0) circle [radius = 1pt] (\x,1) circle [radius = 1pt];
\draw[->] (0.2,1) -- (0.8,1);
\draw[->] (1,0.8) -- (1,0.2);
\draw[->] (1.2,0) -- (1.8,0);
\draw[->, line width = 1.3pt] (0,0.8) -- (0,0.2);
\draw[->, line width = 1.3pt] (0.2,0) -- (0.8,0);
\draw[->, line width = 1.3pt] (1.2,1) -- (1.8,1);
\draw[->, line width = 1.3pt] (2,0.8) -- (2,0.2);
\draw[->, double, line width = 1.3pt] (0.3,0.3) -- (0.7,0.7);
\draw[<-, double, line width = 1.3pt] (1.3,0.3) -- (1.7,0.7);
\end{tikzpicture}\right\} &
\rezkc{\nearrow}{\swarrow} &= 
\left\{\begin{tikzpicture}[baseline = 12]
\foreach \x in {0,1,2}
\filldraw (\x,0) circle [radius = 1pt] (\x,1) circle [radius = 1pt];
\draw[->, line width = 1.3pt] (0.2,1) -- (0.8,1);
\draw[->, line width = 1.3pt] (1,0.8) -- (1,0.2);
\draw[->, line width = 1.3pt] (1.2,0) -- (1.8,0);
\draw[->, line width = 1.3pt] (0,0.8) -- (0,0.2);
\draw[->, line width = 1.3pt] (0.2,0) -- (0.8,0);
\draw[->, line width = 1.3pt] (1.2,1) -- (1.8,1);
\draw[->, line width = 1.3pt] (2,0.8) -- (2,0.2);
\draw[->, double, line width = 1.3pt] (0.3,0.3) -- (0.7,0.7);
\draw[<-, double, line width = 1.3pt] (1.3,0.3) -- (1.7,0.7);
\end{tikzpicture}\right\}\\
\rezkd{\swarrow}{\nearrow} &= 
\left\{\begin{tikzpicture}[baseline = 12]
\foreach \x in {0,1,2}
\filldraw (\x,0) circle [radius = 1pt] (\x,1) circle [radius = 1pt];
\draw[->] (0.2,1) -- (0.8,1);
\draw[->] (1,0.8) -- (1,0.2);
\draw[->] (1.2,0) -- (1.8,0);
\draw[->, line width = 1.3pt] (0,0.8) -- (0,0.2);
\draw[->, line width = 1.3pt] (0.2,0) -- (0.8,0);
\draw[->, line width = 1.3pt] (1.2,1) -- (1.8,1);
\draw[->, line width = 1.3pt] (2,0.8) -- (2,0.2);
\draw[<-, double, line width = 1.3pt] (0.3,0.3) -- (0.7,0.7);
\draw[->, double, line width = 1.3pt] (1.3,0.3) -- (1.7,0.7);
\end{tikzpicture}\right\} &
\rezkc{\swarrow}{\nearrow} &= 
\left\{\begin{tikzpicture}[baseline = 12]
\foreach \x in {0,1,2}
\filldraw (\x,0) circle [radius = 1pt] (\x,1) circle [radius = 1pt];
\draw[->, line width = 1.3pt] (0.2,1) -- (0.8,1);
\draw[->, line width = 1.3pt] (1,0.8) -- (1,0.2);
\draw[->, line width = 1.3pt] (1.2,0) -- (1.8,0);
\draw[->, line width = 1.3pt] (0,0.8) -- (0,0.2);
\draw[->, line width = 1.3pt] (0.2,0) -- (0.8,0);
\draw[->, line width = 1.3pt] (1.2,1) -- (1.8,1);
\draw[->, line width = 1.3pt] (2,0.8) -- (2,0.2);
\draw[<-, double, line width = 1.3pt] (0.3,0.3) -- (0.7,0.7);
\draw[->, double, line width = 1.3pt] (1.3,0.3) -- (1.7,0.7);
\end{tikzpicture}\right\}\\
\rezkd{\swarrow}{\swarrow} &= 
\left\{\begin{tikzpicture}[baseline = 12]
\foreach \x in {0,1,2}
\filldraw (\x,0) circle [radius = 1pt] (\x,1) circle [radius = 1pt];
\draw[->] (0.2,1) -- (0.8,1);
\draw[->] (1,0.8) -- (1,0.2);
\draw[->] (1.2,0) -- (1.8,0);
\draw[->, line width = 1.3pt] (0,0.8) -- (0,0.2);
\draw[->, line width = 1.3pt] (0.2,0) -- (0.8,0);
\draw[->, line width = 1.3pt] (1.2,1) -- (1.8,1);
\draw[->, line width = 1.3pt] (2,0.8) -- (2,0.2);
\draw[<-, double, line width = 1.3pt] (0.3,0.3) -- (0.7,0.7);
\draw[<-, double, line width = 1.3pt] (1.3,0.3) -- (1.7,0.7);
\end{tikzpicture}\right\} &
\rezkc{\swarrow}{\swarrow} &= 
\left\{\begin{tikzpicture}[baseline = 12]
\foreach \x in {0,1,2}
\filldraw (\x,0) circle [radius = 1pt] (\x,1) circle [radius = 1pt];
\draw[->, line width = 1.3pt] (0.2,1) -- (0.8,1);
\draw[->, line width = 1.3pt] (1,0.8) -- (1,0.2);
\draw[->, line width = 1.3pt] (1.2,0) -- (1.8,0);
\draw[->, line width = 1.3pt] (0,0.8) -- (0,0.2);
\draw[->, line width = 1.3pt] (0.2,0) -- (0.8,0);
\draw[->, line width = 1.3pt] (1.2,1) -- (1.8,1);
\draw[->, line width = 1.3pt] (2,0.8) -- (2,0.2);
\draw[<-, double, line width = 1.3pt] (0.3,0.3) -- (0.7,0.7);
\draw[<-, double, line width = 1.3pt] (1.3,0.3) -- (1.7,0.7);
\end{tikzpicture}\right\}
\end{align*}
Here thick arrows indicate marked cubes.
More precisely, $\rezkd{\nearrow}{\nearrow}$ is the pushout of the span
\[
\begin{tikzcd}
X &
\cube 1
\arrow [l, swap, "\face 1 1"]
\arrow [r, "\face 1 0"] &
Y
\end{tikzcd}
\]
where $X$ is obtained from $\mcube 2$ by marking $\face 1 0$ and $\face 2 1$, and $Y$ is obtained from $\mcube 2$ by marking $\face 1 1$ and $\face 2 0$.
The codomain $\rezkc{\nearrow}{\nearrow}$ is the $0$-trivialization $\trunc 0(\rezkd{\nearrow}{\nearrow})$.
The marked cubical sets $\rezkd x y, \rezkc x y$ are defined similarly for other choices of $x,y \in \{\nearrow, \swarrow\}$.
By a \defterm{Rezk map} we mean any map of the form
\[
\bigl(\partial \cube m \incl \cube m\bigr) \hat \otimes \bigl(\rezkd x y \incl \rezkc x y\bigr) \hat \otimes \bigl(\partial \cube n \incl \cube n \bigr).
\]

\begin{Def}\leavevmode
\begin{enumerate}
  \item A \defterm{comical set} is a marked cubical set with the right lifting property with respect to the comical open box inclusions and the elementary comical marking extensions.
  \item A \defterm{saturated comical set} is a marked cubical set with the right lifting property with respect to the comical open box inclusions, the elementary comical marking extensions, and the Rezk maps.
\end{enumerate}
\end{Def}
	
\begin{rmk}\label{comical-remark}
	We briefly explain how the definition of (saturated) comical set should be interpreted.
	In the comical $n$-cube $\admcube n k \varepsilon$, any sub-cube not contained in $\face{k-1}{\varepsilon}$, $\face{k}{\varepsilon}$, $\face{k}{1-\varepsilon}$, or $\face{k+1}{\varepsilon}$ is marked.
	In particular the unique non-degenerate $n$-cube is marked, so it can be thought of as an equivalence between the composite of its odd faces and the composite of even faces.
	In other words, the comical $n$-cube $\admcube n k \varepsilon$ exhibits $\face k \varepsilon$ as a composite of $\face{k-1}{\varepsilon}$, $\face{k}{1-\varepsilon}$, and $\face{k+1}{\varepsilon}$. e.g., $\admcube 3 2 0$ looks like:
		\[
	\begin{tikzpicture}[baseline = -2, scale = 0.7]
	\filldraw
	(150:2) circle [radius = 1.6pt]
	(90:2) circle [radius = 1.6pt]
	(30:2) circle [radius = 1.6pt]
	(-30:2) circle [radius = 1.6pt]
	(-90:2) circle [radius = 1.6pt]
	(-150:2) circle [radius = 1.6pt]
	(0,0) circle [radius = 1.6pt];
	
	\draw[{->}] (135:1.8) -- (105:1.8);
	\draw[{->}] (75:1.8) -- (45:1.8);
	\draw[->] (15:1.8) -- (-15:1.8);
	\draw[{->}] (165:1.8) -- (-165:1.8);	
	\draw[{->}] (-135:1.8) --(-105:1.8);
	\draw[{->}] (-75:1.8) -- (-45:1.8);
	
	\draw[->] (150:1.5) -- (150:0.5);
	\draw[->] (30:0.5) -- (30:1.5);
	\draw[->] (-90:0.5) -- (-90:1.5);
	
	\node[scale=0.8] at (-0.5,-1) {$\partial_{1,0}$};
	\node[scale=0.8] at (-0.7,1) {$\partial_{3,0}$};
	\node[scale=0.8] at (1.2,0.1) {$\partial_{2,1}$};
	
	\draw[->,double] (-150:1) + (-0.3,-0.3) --+ (0.3,0.3);
	\draw[->,double] (90:1) + (-0.3,-0.3) --+ (0.3,0.3);
	\draw[->,double] (-30:1) + (-0.3,-0.3) --+ (0.3,0.3);
	\end{tikzpicture}
	\hspace{10pt}
	\begin{tikzpicture}[baseline = -2]
	\draw[line width = 1.3pt]
	(0,0) -- (0.7,0)
	(0,0.1)--(0.6,0.1)
	(0,-0.1)--(0.6,-0.1)
	(0.5,0.2)--(0.7,0)--(0.5,-0.2);
	\end{tikzpicture}
	\hspace{10pt}
	\begin{tikzpicture}[baseline = -2, scale = 0.7]
	\filldraw
	(150:2) circle [radius = 1.6pt]
	(90:2) circle [radius = 1.6pt]
	(30:2) circle [radius = 1.6pt]
	(-30:2) circle [radius = 1.6pt]
	(-90:2) circle [radius = 1.6pt]
	(-150:2) circle [radius = 1.6pt]
	(0,0) circle [radius = 1.6pt];
	
	\draw[{->}] (135:1.8) -- (105:1.8);
	\draw[{->}] (75:1.8) -- (45:1.8);
	\draw[->] (15:1.8) -- (-15:1.8);
	\draw[{->}] (165:1.8) -- (-165:1.8);	
	\draw[{->}] (-135:1.8) --(-105:1.8);
	\draw[{->}] (-75:1.8) -- (-45:1.8);

	\draw[->] (90:1.5) -- (90:0.5);
	\draw[->] (-150:1.5) -- (-150:0.5);
	\draw[->, line width = 1.3pt] (-30:0.5) -- (-30:1.5);
	
	\node[scale=0.8] at (-1.2,-0.1) {$\partial_{2,0}$};
	
	\draw[->,double] (150:1) + (-0.3,-0.3) --+ (0.3,0.3);
	\draw[->,double, line width = 1.3pt] (-90:1) + (-0.3,-0.3) --+ (0.3,0.3);
	\draw[->,double, line width = 1.3pt] (30:1) + (-0.3,-0.3) --+ (0.3,0.3);
	\end{tikzpicture}
	\]
	One can thus interpret the right lifting property with respect to the comical box inclusions and the comical marking extensions respectively as the existence of composites and the closure of marked cubes under composition.
	In \cref{homotopy-1-categories}, we show how these conditions additionally encode such expected properties of composition as the unit and associative laws, at least for 1-cubes.
\end{rmk}

There are two standard model structures on marked cubical sets:

\begin{thm}[Model structure for comical sets] \label{comical-model-structure}
  The category $\mcSet$ carries two model structures:
  \begin{enumerate}
    \item the \defterm{comical model structure} characterized by the following properties:
    \begin{itemize}
		\item The cofibrations are the monomorphisms.
		\item The set of\begin{itemize}
			\item comical open box inclusions, and
			\item elementary comical marking extensions
		\end{itemize}
	forms a pseudo-generating set of trivial cofibrations.
    \end{itemize}
    \item the \defterm{saturated comical model structure} characterized by the following properties:
    \begin{itemize}
		\item The cofibrations are the monomorphisms.
		\item The set of
		\begin{itemize}
			\item comical open box inclusions,
			\item elementary comical marking extensions, and
			\item Rezk maps
		\end{itemize}
	forms a pseudo-generating set of trivial cofibrations.      
    \end{itemize}
    Both of these model structures are combinatorial, left proper, monoidal with respect to either of the Gray tensor products, and have all objects cofibrant.
  \end{enumerate}
\end{thm}

The proof of this theorem is an application of the Cisinski--Olschok theory and verification of the closure of anodyne maps under pushout-product.
The latter part is contained in \cref{monoidal-model} below.

\begin{Def}
  We say that a map of marked cubical sets $X \to Y$ is 
  \begin{enumerate}
    \item a \defterm{comical marking extension} if it is in the cellular closure of the elementary comical marking extensions.
    \item \defterm{comical} if it is in the cellular closure of the comical open box inclusions and the elementary comical marking extensions.
    \end{enumerate}
\end{Def}

\begin{lem}\label{monoidal-model}
	For any $1 \le k \le m$, $\varepsilon \in \{0,1\}$ and $n \ge 0$ (or $n \ge 1$ for $g$), the Leibniz Gray tensor products
	\[
	\begin{gathered}
	f = \bigl(\obox m k \varepsilon \hookrightarrow \admcube m k \varepsilon \bigl) \hat \odot \bigl(\partial \cube n \hookrightarrow \cube n \bigr)\\
	g = \bigl(\obox m k \varepsilon \hookrightarrow \admcube m k \varepsilon \bigl) \hat \odot \bigl(\cube n \hookrightarrow \mcube n \bigr)\\
	h = \bigl(\admcubep{m}{k}{\varepsilon} \hookrightarrow \admcubepp{m}{k}{\varepsilon} \bigl) \hat \odot \bigl(\partial \cube n \hookrightarrow \cube n \bigr)
	\end{gathered}
	\]
	are all comical.
\end{lem}
\begin{proof}
	Since the case $n = 0$ is trivial, we will assume otherwise.
	
	Consider a face of $\cube {m+n}$ whose normal form $\face{k_1}{\varepsilon_1}\dots\face{k_c}{\varepsilon_c}$ does not involve $\face{k-1}{\varepsilon}$, $\face{k}{0}$, $\face{k}{1}$ or $\face{k+1}{\varepsilon}$.
	Then clearly any terminal segment of this normal form does not involve any of these four $\partial$'s.
	This observation implies that the second isomorphism of \cref{geom-prod-of-bdry} may be lifted to the following commutative square:
	\[
	\begin{tikzcd}[row sep = large]
	\obox {m+n} k \varepsilon
	\arrow [r] 
	\arrow [d, hook] &
	\bigl(\admcube{m}{k}{\varepsilon} \odot \partial \cube n\bigr) \cup \bigl(\obox{m}{k}{\varepsilon} \odot \cube n\bigr)
	\arrow [d, hook, "f"] \\
	\admcube{m+n}{k}{\varepsilon}
	\arrow [r] &
	\admcube{m}{k}{\varepsilon}\odot\cube n 
	\end{tikzcd}
	\]
	Observe that this is a pushout square on the underlying cubical set level.
	
	In the case $\odot = \otimes$, it is in fact a pushout square in $\mcSet$.
	To see this, it suffices to check that the marking on $\admcube{m}{k}{\varepsilon}\otimes\cube n$ agrees with that described in \cref{colim-in-mcSet}.
	This is indeed the case since $f$ is regular by \cref{Gray-tensor-of-monos}(1) and the only marked non-degenerate cube in $\cod(f) \setminus \dom(f)$ is the $(m+n)$-cube, which is the image of a marked cube under the lower horizontal map.
	Thus $f$ is indeed comical.
	
	Now we consider the case $\odot = \pseudo$.
	If $m=1$ then we can simply repeat the above argument since we have natural isomorphisms $\obox 1 1 \epsilon \pseudo (-) \cong \obox 1 1 \epsilon \otimes (-)$ and $\admcube 1 1 \epsilon \pseudo (-) \cong \admcube 1 1 \epsilon \otimes (-)$ by \cref{comparing-Gray-tensors}.
	So assume $m \ge 2$.
	Then there is an extra marked non-degenerate cube in $\cod(f)\setminus\dom(f)$, namely $\face k \epsilon$.
	But then it is straightforward to check that $\face{k-1}{\epsilon}$, $\face{k}{1-\epsilon}$ and $\face{k+1}{\epsilon}$ are also marked (whenever they exist).
	So $f$ can be written as a pushout of the open box inclusion $\obox{m+n}{k}{\epsilon} \incl \admcube{m+n}{k}{\epsilon}$ followed by a pushout of the comical marking extension $\admcubep{m+n}{k}{\epsilon} \incl \admcubepp{m+n}{k}{\epsilon}$.
	
	The map $g$ is entire by \cref{Gray-tensor-of-monos}(2).
	Similarly to the above argument, one can deduce the existence of the following commutative square of entire monomorphisms:
	\[
	\begin{tikzcd}[row sep = large]
	\admcubep{m+n}{k}{\varepsilon}
	\arrow [r]
	\arrow [d, hook] &
	\bigl(\admcube{m}{k}{\varepsilon}\odot\cube n\bigr) \cup \bigl(\obox{m}{k}{\varepsilon}\odot\mcube n\bigr)
	\arrow [d, hook, "g"] \\
	\admcubepp{m+n}{k}{\varepsilon}
	\arrow [r] &
	\admcube{m}{k}{\varepsilon}\odot\mcube n
	\end{tikzcd}
	\]
	One can check that, in the case where $\odot = \pseudo$ and $m \ge 2$, the map $g$ is in fact invertible.
	Otherwise, the only cube in $\cod(g)$ that is not marked in $\dom(g)$ is $\face{k}{\varepsilon}$ and it is the image of a marked cube under the lower horizontal map.
	This shows that the above square is a pushout.
	Hence $g$ is a comical marking extension.
	
	Similarly, one can check that the following square is a pushout:
	\[
	\begin{tikzcd}[row sep = large]
	\admcubep{m+n}{k}{\varepsilon}
	\arrow [r]
	\arrow [d, hook] &
	\bigl(\admcubepp{m}{k}{\varepsilon}\odot\partial\cube n\bigr) \cup \bigl(\admcubep{m}{k}{\varepsilon}\odot\cube n\bigr)
	\arrow [d, hook, "h"] \\
	\admcubepp{m+n}{k}{\varepsilon}
	\arrow [r] &
	\admcubepp{m}{k}{\varepsilon}\odot\cube n
	\end{tikzcd}
	\]
	Therefore $h$ is a comical marking extension.
\end{proof}

\begin{proof}[Proof of \cref{comical-model-structure}]
    We apply the Cisinski--Olschok theory, i.e., \cref{olschok} with $\calK = \mcSet$ and $I$ the set of boundary inclusions and markers.
    The set $S$ consists of the comical open box inclusions and the comical marking extensions in (1), and it additionally contains all Rezk maps in (2).
    For our cylinder functor $C$, we can use either $\mcube 1 \otimes (-)$ or $\mcube 1 \varoast (-)$ as they are equal by \cref{comparing-Gray-tensors}.
    This produces a model structure on $\mcSet$ in which the cofibrations are the monomorphisms and $\Lambda(\mcSet,I,C,S)$ is a pseudo-generating set of trivial cofibrations.
    
    It remains to prove that the set $S$ is in fact pseudo-generating, and moreover the model structure is monoidal with respect to either of the Gray tensor products.
    By duality and \cref{left-Quillen-bifunctor} it suffices to show that:
    \begin{itemize}
    	\item $f \hat \odot g$ is in the cellular closure of $I$ whenever $f,g \in I$; and
    	\item $f \hat \odot g$ is in the cellular closure of $S$ whenever $f \in S$ and $g \in I$.
    \end{itemize}
	The first clause essentially follows from the unmarked version (\cref{geom-prod-of-monos}).
	We now treat the second clause.
    
	There are three kinds of maps in $S$, namely:
	\begin{itemize}
		\item [(A)] comical box inclusions;
		\item [(B)] elementary comical marking extensions; and
		\item [(C)] Rezk maps,
	\end{itemize}
	and two kinds of maps in $I$, namely:
	\begin{itemize}
		\item [(a)] boundary inclusions; and
		\item [(b)] markers.
	\end{itemize}
	The case (Ca$\otimes$) is a straightforward consequence of the associativity of $\hat \otimes$ and \cref{Gray-tensor-of-bdry}.
	The case (Ca$\varoast$) then follows by \cref{Gray-tensor-of-monos}(4).
	In the cases (Bb) and (Cb), the map $f \hat \odot g$ is invertible by \cref{Gray-tensor-of-monos}(3).
	The remaining cases are treated in \cref{monoidal-model}.
\end{proof}

There are also $n$-trivial versions of these model structures.

\begin{thm}[Model structure for $n$-trivial comical sets] \label{n-comical-model-structure}
The category $\mcSet$ carries two families of model structures:
\begin{enumerate}
  \item the \defterm{$n$-trivial comical model structure} characterized by the following properties:
  \begin{itemize}
	\item The cofibrations are the monomorphisms.
	\item The set of
	\begin{itemize}
		\item comical open box inclusions,
		\item elementary comical marking extensions of dimension $\le n+1$, and
		\item markers of dimension $>n$
	\end{itemize}
forms a pseudo-generating set of trivial cofibrations
\end{itemize}
  \item the \defterm{saturated $n$-trivial comical model structure} characterized by the following properties:
  \begin{itemize}
	\item The cofibrations are the monomorphisms.
	\item The set of
	\begin{itemize}
		\item comical open box inclusions,
		\item elementary comical marking extensions of dimension $\le n+1$,
		\item markers of dimension $>n$, and
		\item Rezk maps
	\end{itemize}
form a pseudo-generating set of trivial cofibrations.
\end{itemize}
\end{enumerate}
\end{thm}

\begin{proof}
  Analogous to the proof of \cref{comical-model-structure}.
  Note that the Leibniz Gray tensor product of the $m$-marker with any monomorphism is in the cellular closure of the $m'$-markers with $m' \ge m$.
\end{proof}

\begin{prop} \label{trunc-left-Quillen-cubical}
The functor $\trunc n \colon \mcSet \to \mcSet$ is a left Quillen functor from the $n$-trivial comical model structure (resp.~saturated $n$-trivial comical model structure) to the comical model structure (resp.~saturated comical model structure).
\end{prop}

\begin{lem}\label{fill-and-mark}
	For any $n \ge 0$, $1 \le k \le n+2$ and $\varepsilon \in \{0,1\}$, the map $\trunc{n}\obox {n+2} k \varepsilon \hookrightarrow \trunc{n} \admcube {n+2} k \varepsilon = \admcubepp {n+2} k \varepsilon$ is comical.
\end{lem}
\begin{proof}
	Recall the defining pushout square of the comical marking extension:
	\[
	\begin{tikzcd}[row sep = large]
	\obox {n+2} k \varepsilon
	\arrow [d]
	\arrow [r]
	\arrow [dr, phantom, "\mathrm{p.o.}"] &
	\trunc{n}\obox {n+2} k \varepsilon
	\arrow [d]
	\arrow [ddr, bend left] & \\
	\admcube {n+2} k \varepsilon
	\arrow [r]
	\arrow [drr, bend right]&
	\cdot
	\arrow [dr, dashed] & \\
	& & \trunc{n}\admcube {n+2} k \varepsilon
	\end{tikzcd}
	\]
	This diagram exhibits the desired result.
\end{proof}

\begin{proof}[Proof of \cref{trunc-left-Quillen-cubical}]
	That $\trunc n$ preserves cofibrations is obvious.
	Thus it suffices to check (by \cite[Lem.~7.14]{joyal-tierney:qcat-vs-segal}) that $\trunc n$ sends each member $f \colon X \to Y$ of the pseudo-generating set to a trivial cofibration.
	
	Observe that, unless $f$ is the open box inclusion $\obox {m} k \epsilon \incl \admcube {m} k \epsilon$ with $m \ge n+2$, any marked cube in $Y$ of dimension $>n$ admits a (not necessarily marked) preimage in $X$.
	In these cases, the naturality square for the unit
	\[
	\begin{tikzcd}
	X
	\arrow [r]
	\arrow [d, "f", swap] &
	\trunc n X
	\arrow [d, "\trunc n f"]\\
	Y
	\arrow [r] &
	\trunc n Y
	\end{tikzcd}
	\]
	is a pushout in $\mcSet$ by \cref{colim-in-mcSet}, so $\trunc n f$ is a trivial cofibration.
	
	So assume that $f$ is the open box inclusion $\obox {m} k \epsilon \incl \admcube {m} k \epsilon$ with $m \ge n+2$.
	Then we have the following commutative square:
	\[
	\begin{tikzcd}[row sep = large]
		\trunc{m-2} \obox m k \epsilon
		\arrow [d, "\trunc{m-2}f", swap]
		\arrow [r] &
		\trunc n \obox m k \epsilon
		\arrow [d, "\trunc n f = \trunc n(\trunc{m-2}f)"]\\
		\trunc{m-2}\admcube{m}{k}{\epsilon}
		\arrow [r] &
		\trunc n \admcube m k \epsilon
	\end{tikzcd}
	\]
	Observe that the left vertical map is comical by \cref{fill-and-mark} (with suitable substitution), and moreover it satisfies the condition on $f$ described in the previous paragraph.
	Thus this square is a pushout, exhibiting $\trunc n f$ as a comical map.
	This completes the proof.
\end{proof}

As we mentioned earlier, our definition of comical box inclusion uses the marking described in \cite{steiner:cubical-nerve} where Steiner characterizes the nerves of strict $\omega$-categories.
Phrased in the language of comical sets, his characterization implies the following result:

\begin{thm}[{cf.~\cite[Thm.~3.16]{steiner:cubical-nerve}}]
	The cubical nerve of a globular $\omega$-category, or equivalently the underlying cubical set of a cubical $\omega$-category with connections, is a comical set. \qed
\end{thm}

This is analogous to the statement that the simplicial nerve of a strict $\omega$-category is a complicial set \cite{verity:complicial}.

We conclude this section with the following observation which will be useful in \cref{sec:triangulating-quillen}.

\begin{prop}\label{elementary-boxes}
	For any $n \ge 2$, $1 \le k \le n$ and $\varepsilon \in \{0,1\}$, the comical box inclusion $\obox{n}{k}{\varepsilon} \hookrightarrow \admcube{n}{k}{\varepsilon}$ may be written as:
	\begin{align*}
	& \bigl(\obox{2}{1}{\varepsilon} \hookrightarrow \admcube{2}{1}{\varepsilon}\bigr) \hat\otimes \bigl(\partial\cube{n-2} \hookrightarrow \cube{n-2}\bigr), & & k = 1,\\
	& \bigl(\partial \cube{k-2} \hookrightarrow \cube{k-2} \bigr) \hat \otimes \bigl(\obox{3}{2}{\varepsilon} \hookrightarrow \admcube{3}{2}{\varepsilon}\bigr) \hat \otimes \bigl(\partial \cube{n-k-1} \hookrightarrow \cube{n-k-1}\bigr), & & 1 < k < n,\\
	& \bigl(\partial\cube{n-2} \hookrightarrow \cube{n-2}\bigr) \hat \otimes \bigl(\obox{2}{2}{\varepsilon} \hookrightarrow \admcube{2}{2}{\varepsilon}\bigr), & & k = n.
	\end{align*}
\end{prop}
\begin{proof}
	It is easy to check that the underlying cubical maps match, and also the markings on the codomains match.
	Now observe that these Leibniz Gray tensor products are regular by \cref{Gray-tensor-of-monos}(1).
\end{proof}



\newcommand{\tikzsquare}[1]
{
\begin{tikzpicture}[baseline = 25]
\node at (0,0) {$#1[3]$};
\node at (0,2) {$#1[1]$};
\node at (2.3,0) {$#1[4]$};
\node at (2.3,2) {$#1[2]$};

\draw[{#1[5]}] (0.4,2) -- (1.9,2);
\node[scale = 0.8] at (1.15,2.3) {$#1[6]$};
\draw[{#1[7]}] (2.3,1.7) -- (2.3,0.3);
\node[scale = 0.8] at (2.6,1) {$#1[8]$};
\draw[{#1[9]}] (0,1.7) -- (0,0.3);
\node[scale = 0.8] at (-0.3,1) {$#1[10]$};
\draw[{#1[11]}] (0.4,0) -- (1.9,0);
\node[scale = 0.8] at (1.15,-0.3) {$#1[12]$};
\draw[->, double, line width = 1.3pt] (0.4,0.4) -- (1.9,1.6);
\node[fill = white] at (1.15,1) {$#1[13]$};
\end{tikzpicture}
}

\newcommand{\tikzcube}[1]
{
	\begin{tikzpicture}[baseline = -2, scale = 0.8]
	\node at (150:2) {$#1[1]$};
	\node at (90:2) {$#1[2]$};
	\node at (30:2) {$#1[3]$};
	\node at (-30:2) {$#1[4]$};
	\node at (-90:2) {$#1[6]$};
	\node at (-150:2) {$#1[5]$};
	\node at (0,0) {$#1[7]$};
	
	\draw[{#1[9]}] (135:1.8) -- (105:1.8);
	\node[scale = 0.8] at (120:1.9) {$#1[10]$};
	\draw[{#1[11]}] (75:1.8) -- (45:1.8);
	\node[scale = 0.8] at (60:1.9) {$#1[12]$};
	\draw[{#1[13]}] (15:1.8) -- (-15:1.8);
	\node[scale=0.8] at (1.9,0) {$#1[14]$};
	\draw[{#1[15]}] (165:1.8) -- (-165:1.8);
	\node[scale=0.8] at (-1.9,0) {$#1[16]$};	
	\draw[{#1[17]}] (-135:1.8) --(-105:1.8);
	\node[scale = 0.8] at (-120:1.9) {$#1[18]$};
	\draw[{#1[19]}] (-75:1.8) -- (-45:1.8);
	\node[scale = 0.8] at (-60:1.9) {$#1[20]$};
	
	\draw[{#1[21]}] (150:1.5) -- (150:0.5);
	\node[scale=0.8] at (140:1) {$#1[22]$};
	\draw[{#1[23]}] (30:0.5) -- (30:1.5);
	\node[scale=0.8] at (40:1) {$#1[24]$};
	\draw[{#1[25]}] (-90:0.5) -- (-90:1.5);
	\node[scale=0.8] at (0.2,-0.9) {$#1[26]$};
	
	\node at (90:1) {$#1[33]$};
	\node at (-30:1) {$#1[34]$};
	\node at (-150:1) {$#1[35]$};
	\end{tikzpicture}
	\hspace{10pt} #1[39] \hspace{10pt}
	\begin{tikzpicture}[baseline = -2, scale = 0.8]
	\node at (150:2) {$#1[1]$};
	\node at (90:2) {$#1[2]$};
	\node at (30:2) {$#1[3]$};
	\node at (-30:2) {$#1[4]$};
	\node at (-90:2) {$#1[6]$};
	\node at (-150:2) {$#1[5]$};
	
	\draw[{#1[9]}] (135:1.8) -- (105:1.8);
	\node[scale = 0.8] at (120:1.9) {$#1[10]$};
	\draw[{#1[11]}] (75:1.8) -- (45:1.8);
	\node[scale = 0.8] at (60:1.9) {$#1[12]$};
	\draw[{#1[13]}] (15:1.8) -- (-15:1.8);
	\node[scale=0.8] at (1.9,0) {$#1[14]$};
	\draw[{#1[15]}] (165:1.8) -- (-165:1.8);
	\node[scale=0.8] at (-1.9,0) {$#1[16]$};	
	\draw[{#1[17]}] (-135:1.8) --(-105:1.8);
	\node[scale = 0.8] at (-120:1.9) {$#1[18]$};
	\draw[{#1[19]}] (-75:1.8) -- (-45:1.8);
	\node[scale = 0.8] at (-60:1.9) {$#1[20]$};
	
	\node at (0,0) {$#1[8]$};
	
	\draw[{#1[27]}] (90:1.5) -- (90:0.5);
	\node[scale=0.8] at (0.2,0.9) {$#1[28]$};
	\draw[{#1[29]}] (-150:1.5) -- (-150:0.5);
	\node[scale=0.8] at (-160:1) {$#1[30]$};
	\draw[{#1[31]}] (-30:0.5) -- (-30:1.5);
	\node[scale=0.8] at (-20:1) {$#1[32]$};
	
	\node at (30:1) {$#1[36]$};
	\node at (150:1) {$#1[37]$};
	\node at (-90:1) {$#1[38]$};
	\end{tikzpicture}}

\newcommand{\tikzcubedblarr}[1]
{
	\begin{tikzpicture}[baseline = -2]
	\node at (150:2) {$#1[1]$};
	\node at (90:2) {$#1[2]$};
	\node at (30:2) {$#1[3]$};
	\node at (-30:2) {$#1[4]$};
	\node at (-90:2) {$#1[6]$};
	\node at (-150:2) {$#1[5]$};
	\node at (0,0) {$#1[7]$};
	
	\draw[{#1[9]}] (135:1.8) -- (105:1.8);
	\node[scale = 0.8] at (120:1.9) {$#1[10]$};
	\draw[{#1[11]}] (75:1.8) -- (45:1.8);
	\node[scale = 0.8] at (60:1.9) {$#1[12]$};
	\draw[{#1[13]}] (15:1.8) -- (-15:1.8);
	\node[scale=0.8] at (1.9,0) {$#1[14]$};
	\draw[{#1[15]}] (165:1.8) -- (-165:1.8);
	\node[scale=0.8] at (-1.9,0) {$#1[16]$};	
	\draw[{#1[17]}] (-135:1.8) --(-105:1.8);
	\node[scale = 0.8] at (-120:1.9) {$#1[18]$};
	\draw[{#1[19]}] (-75:1.8) -- (-45:1.8);
	\node[scale = 0.8] at (-60:1.9) {$#1[20]$};
	
	\draw[{#1[21]}] (150:1.5) -- (150:0.5);
	\node[scale=0.8] at (140:1) {$#1[22]$};
	\draw[{#1[23]}] (30:0.5) -- (30:1.5);
	\node[scale=0.8] at (40:1) {$#1[24]$};
	\draw[{#1[25]}] (-90:0.5) -- (-90:1.5);
	\node[scale=0.8] at (0.2,-0.9) {$#1[26]$};
	
	\draw[->,double] (-150:1) + (-0.2,-0.2) --+ (0.2,0.2);
	\draw[->,double] (90:1) + (-0.2,-0.2) --+ (0.2,0.2);
	\draw[->,double] (-30:1) + (-0.2,-0.2) --+ (0.2,0.2);
	\end{tikzpicture}
	\hspace{10pt} = \hspace{10pt}
	\begin{tikzpicture}[baseline = -2]
	\node at (150:2) {$#1[1]$};
	\node at (90:2) {$#1[2]$};
	\node at (30:2) {$#1[3]$};
	\node at (-30:2) {$#1[4]$};
	\node at (-90:2) {$#1[6]$};
	\node at (-150:2) {$#1[5]$};
	
	\draw[{#1[9]}] (135:1.8) -- (105:1.8);
	\node[scale = 0.8] at (120:1.9) {$#1[10]$};
	\draw[{#1[11]}] (75:1.8) -- (45:1.8);
	\node[scale = 0.8] at (60:1.9) {$#1[12]$};
	\draw[{#1[13]}] (15:1.8) -- (-15:1.8);
	\node[scale=0.8] at (1.9,0) {$#1[14]$};
	\draw[{#1[15]}] (165:1.8) -- (-165:1.8);
	\node[scale=0.8] at (-1.9,0) {$#1[16]$};	
	\draw[{#1[17]}] (-135:1.8) --(-105:1.8);
	\node[scale = 0.8] at (-120:1.9) {$#1[18]$};
	\draw[{#1[19]}] (-75:1.8) -- (-45:1.8);
	\node[scale = 0.8] at (-60:1.9) {$#1[20]$};
	
	\node at (0,0) {$#1[8]$};
	
	\draw[{#1[27]}] (90:1.5) -- (90:0.5);
	\node[scale=0.8] at (0.2,0.9) {$#1[28]$};
	\draw[{#1[29]}] (-150:1.5) -- (-150:0.5);
	\node[scale=0.8] at (-160:1) {$#1[30]$};
	\draw[{#1[31]}] (-30:0.5) -- (-30:1.5);
	\node[scale=0.8] at (-20:1) {$#1[32]$};
	
	\draw[->,double] (150:1) + (-0.2,-0.2) --+ (0.2,0.2);
	\draw[->,double] (-90:1) + (-0.2,-0.2) --+ (0.2,0.2);
	\draw[->,double] (30:1) + (-0.2,-0.2) --+ (0.2,0.2);
	\end{tikzpicture}}


\section{Homotopy 1-categories of comical sets} \label{homotopy-1-categories}

Suppose we are given two 1-cubes $f,g \colon x \to y$ in a comical set $X$.
Then a marked 2-cube satisfying any one of the following boundary conditions may be reasonably regarded as a homotopy $f \sim g$:
\[
\begin{aligned}
\readlist\htpyone{x,y,y,y,->,f,double,,->,g,double,,\phi}
\tikzsquare{\htpyone} & & 
\readlist\htpyonep{x,y,y,y,->,g,double,,->,f,double,,\phi'}
\tikzsquare{\htpyonep} & & 
\readlist\htpytwo{x,x,y,y,double,,->,g,->,f,double,,\chi}
\tikzsquare{\htpytwo} & & 
\readlist\htpytwop{x,x,y,y,double,,->,f,->,g,double,,\chi'}
\tikzsquare{\htpytwop} \\
\readlist\htpythree{x,y,x,y,->,f,double,,double,,->,g,\psi}
\tikzsquare{\htpythree} & &
\readlist\htpythreep{x,y,x,y,->,g,double,,double,,->,f,\psi'}
\tikzsquare{\htpythreep} & &
\readlist\htpyfour{x,x,x,y,double,,->,f,double,,->,g,\omega}
\tikzsquare{\htpyfour} & &
\readlist\htpyfourp{x,x,x,y,double,,->,g,double,,->,f,\omega'}
\tikzsquare{\htpyfourp}
\end{aligned}
\]
Here equalities indicate degenerate (and hence marked) 1-cubes.
\begin{prop}\label{homotopy-relation}
	If any one of the above boundary conditions admits a marked solution in the comical set $X$ then so do the others.
\end{prop}

\begin{proof}
	Consider the following picture:
	\[
	\readlist\huone{x,x,x,y,x,x,x,x,double,,double,,->,f,double,,double,,->,f,double,,double,,double,,double,,double,,->,g,deg,min,deg,\omega,deg,\omega',}
	\tikzcube{\huone}
	\]
	(Here the faces labelled ``$deg$'' are fully degenerate on the $0$-cube $x$, and the face labelled ``$min$'' is the min-connection on $f$.)
	If we have a marked 2-cube $\omega$ satisfying the boundary condition specified above, then this picture may be interpreted as a map $\trunc 1 \obox{3}{3}{1} \to X$ which may be extended to $\admcubepp 3 3 1$ by \cref{fill-and-mark}, yielding a marked 2-cube $\omega'$.
	Conversely, if we are given $\omega'$ then this picture specifies a map $\trunc 1 \obox 3 1 1 \to X$ and extending it to $\admcubepp 3 1 1$ yields a marked 2-cube $\omega$.
	
	Similarly, the following picture shows that $\chi$ exists if and only if $\omega$ does:
	\[
	\readlist\hutwo{x,x,x,y,x,y,x,x,double,,double,,->,g,double,,->,g,double,,double,,double,,->,f,double,,double,,->,g,deg,\chi,\omega,min,deg,deg,}
	\tikzcube{\hutwo}
	\]
	Dually, $\chi'$ exists if and only if $\omega'$ does.
	
	The following picture shows that $\psi$ exists if and only if $\omega'$ does (and dually $\psi'$ exists if and only if $\omega$ does):
	\[
	\readlist\huthree{x,x,y,y,x,x,x,x,double,,->,g,double,,double,,double,,->,g,double,,->,f,double,,double,,double,,->,g,\omega',\psi,deg,deg,deg,min,}
	\tikzcube{\huthree}
	\]
	
	Finally the following picture shows that $\phi$ exists if and only if $\chi'$ does (and dually $\phi'$ exists if and only if $\chi$ does):
	\[
	\readlist\hufour{x,x,y,y,x,y,x,x,double,,->,f,double,,double,,->,g,double,,double,,->,f,->,g,double,,double,,->,f,min,\phi,min,deg,deg,\chi',}
	\tikzcube{\hufour}
	\]
	This completes the proof.
\end{proof}

\begin{Def}
	We say two 1-cubes $f,g$ in a comical set $X$ are \emph{homotopic} and write $f \sim g$ if any one of the above marked 2-cubes exists in $X$.
\end{Def}

\begin{prop}\label{homotopy-is-eq-rel}
	For any pair of $0$-cubes $x,y$ in a comical set $X$, the homotopy relation is an equivalence relation on the set of all 1-cubes $x \to y$.
\end{prop}
\begin{proof}
	The reflexivity of $\sim$ is obvious, and its symmetry follows from \cref{homotopy-relation}.
	For transitivity, suppose we are given two homotopies:
	\[
	\begin{aligned}
	\readlist\eqone{x,y,y,y,->,f,double,,->,g,double,,\phi}
	\tikzsquare{\eqone} & & 
	\readlist\eqtwo{x,y,y,y,->,g,double,,->,h,double,,\chi}
	\tikzsquare{\eqtwo}
	\end{aligned}
	\]
	Then the following picture specifies a map $\trunc 1 \obox 3 2 0 \to X$:
	\[
	\readlist\eqthree{x,y,y,y,y,y,y,y,->,f,double,,double,,->,h,double,,double,,->,g,double,,double,,double,,double,,double,,\phi,deg,\chi,deg,,deg,}
	\tikzcube{\eqthree}
	\]
	This map extends to $\admcubepp 3 2 0$, which in particular yields a homotopy $f \sim h$.
\end{proof}

Now consider a ``composable'' pair of 1-cubes $f\colon x \to y$ and $g \colon y \to z$ in a comical set $X$.
We may ``compose'' $f$ and $g$ by filling any one of the open boxes $\obox 2 1 0$, $\obox 2 1 1$, $\obox 2 2 0$, $\obox 2 2 1$ as follows:
\[
\begin{aligned}
\readlist\compone{x,y,z,z,->,f,->,g,->,a,double,,\phi}
\tikzsquare{\compone} & &
\readlist\comptwo{x,x,y,z,double,,->,b,->,f,->,g,\chi}
\tikzsquare{\comptwo} & & 
\readlist\compthree{x,z,y,z,->,c,double,,->,f,->,g,\psi}
\tikzsquare{\compthree} & & 
\readlist\compfour{x,y,x,z,->,f,->,g,double,,->,d,\omega}
\tikzsquare{\compfour}
\end{aligned}
\]
We will temporarily call such $a$ a \emph{$(1,0)$-composite} of $f$ and $g$, and similarly call $b$, $c$ and $d$ $(1,1)$-, $(2,0)$- and $(2,1)$-composites of $f$ and $g$ respectively.

\begin{prop}\label{composite-is-unique}
	Any two composites of $f$ and $g$ are homotopic to each other.
\end{prop}

\begin{proof}
	First, consider two $(1,0)$-composites $a$ and $a'$, witnessed by $2$-cubes $\phi$ and $\phi'$ respectively.
	Then the following picture specifies a map $\trunc 1 \obox 3 1 0 \to X$:
	\[
	\readlist\compositezero{x,y,z,z,x,z,z,y,->,f,->,g,double,,double,,->,a',double,,->,a,double,,double,,double,,->,f,->,g,\phi,deg,,deg,deg,\phi',}
	\tikzcube{\compositezero}
	\]
	This map extends to $\admcubepp 3 2 0$, which yields a homotopy $a \sim a'$.
	Similarly, for any given $a,b,c,d$ as above, a homotopy $a \sim c$ can be obtained by filling the following open box:
	\[
	\readlist\compositeone{x,x,y,z,x,y,y,z,double,,->,f,->,g,double,,->,f,->,g,->,f,double,,double,,->,a,->,c,double,,deg,min,deg,\phi,,\psi,}
	\tikzcube{\compositeone}
	\]
	and a homotopy $b \sim d$ can be obtained using:
	\[
	\readlist\compositetwo{x,x,x,z,x,x,x,y,double,,double,,->,b,double,,double,,->,d,double,,double,,double,,->,f,->,f,->,g,deg,,deg,\chi,min,\omega,}
	\tikzcube{\compositetwo}
	\]
	Finally, we can turn the $(2,1)$-composite $d$ into a $(1,0)$-composite using the following open box:
	\[
	\readlist\compositethree{x,x,y,z,x,z,x,x,double,,->,f,->,g,double,,->,d,double,,double,,->,f,->,d,double,,double,,->,d,min,,min,\omega,deg,deg,}
	\tikzcube{\compositethree}
	\]
	This completes the proof.
\end{proof}

\begin{prop}\label{homotopy-is-congruence}
	Let $f,f'\colon x \to y$ and $g,g'\colon y \to z$ be 1-cubes in a comical set $X$ such that $f \sim f'$ and $g \sim g'$.
	Then any composite of $f$ and $g$ is homotopic to any composite of $f'$ and $g'$.
\end{prop}
\begin{proof}
	Choose witnesses of the following forms for compositions and a homotopy $f \sim f'$:
	\[
	\begin{aligned}
	\readlist\wellone{x,x,y,z,double,,->,a,->,f,->,g,\phi}
	\tikzsquare{\wellone} & &
	\readlist\welltwo{x,y,x,z,->,f',->,g,double,,->,b,\chi}
	\tikzsquare{\welltwo} & &
	\readlist\wellthree{x,x,x,y,double,,->,f,double,,->,f',\psi}
	\tikzsquare{\wellthree}
	\end{aligned}
	\]
	Then extending the following map $\trunc 1 \obox 3 2 1 \to X$ to $\admcubepp 3 2 1$ yields a homotopy $a \sim b$:
	\[
	\readlist\wellfour{x,x,x,z,x,x,x,y,double,,double,,->,a,double,,double,,->,b,double,,double,,double,,->,f,->,f',->,g,deg,,deg,\phi,\psi,\chi,}
	\tikzcube{\wellfour}
	\]
	
	Similarly, we may combine marked 2-cubes of the forms
	\[\begin{aligned}
	\readlist\wellfive{x,z,y,z,->,c,double,,->,f',->,g,\phi'}
	\tikzsquare{\wellfive} & &
	\readlist\wellsix{x,y,x,z,->,f',->,g',->,d,double,,\chi'}
	\tikzsquare{\wellsix} & &
	\readlist\wellseven{y,z,z,z,->,g,double,,->,g',double,,\psi'}
	\tikzsquare{\wellseven}
	\end{aligned}\]
	into a map $\trunc 1 \obox 3 2 0 \to X$ as follows:
	\[
	\readlist\welleight{x,z,z,z,z,z,y,z,->,c,double,,double,,->,d,double,,double,,->,f',->,g,->,g',double,,double,,double,,\phi',\psi',\chi',deg,,deg,}
	\tikzcube{\welleight}
	\]
	Extending this map to $\admcubepp 3 2 0$ yields a homotopy $c \sim d$.
	The desired result now follows by \cref{homotopy-is-eq-rel,composite-is-unique}.
\end{proof}

\begin{Def}
	We define the \emph{homotopy 1-category} $ho_1 X$ of a comical set $X$ to be the category of 0-cubes and homotopy classes of 1-cubes in $X$.
\end{Def}

\begin{prop}\label{homotopy-category-exists}
	For a comical set $X$, $ho_1 X$ is indeed a 1-category.
\end{prop}

\begin{proof}
	\cref{homotopy-is-congruence} implies that we have a well-defined composition operation on $ho_1 X$.
	For any 0-cube $x$ in $X$, we claim that the homotopy class containing $x \sigma_1$ is the identity at $x$.
	Indeed for any $f \colon x \to y$, the degenerate 2-cube
	\[
	\readlist\unital{x,x,y,y,double,x \sigma_1,->,f,->,f,double,y \sigma_1,f   \sigma_1}
	\tikzsquare{\unital}
	\]
	exhibits $f$ as a $(1,0)$-composite of $x \sigma_1$ and $f$, and also as a $(1,1)$-composite of $f$ and $y \sigma_1$.
	
	For associativity, suppose we are given 1-cubes $f\colon x \to y$, $g\colon y \to z$ and $h\colon z \to w$ in $X$.
	Compose these 1-cubes as follows:
	\[
	\readlist\assocone{x,y,x,z,->,f,->,g,double,,->,a,\phi}
	\readlist\assoctwo{x,z,x,w,->,a,->,h,double,,->,b,\chi}
	\readlist\assocthree{y,z,y,w,->,g,->,h,double,,->,c,\psi}
	\begin{aligned}
	\tikzsquare{\assocone} & & \tikzsquare{\assoctwo} & & \tikzsquare{\assocthree}
	\end{aligned}
	\]
	Then we may combine them into a map $\trunc 1 \obox 3 3 1 \to X$:
	\[
	\readlist\assocfour{x,y,z,w,x,x,x,y,->,f,->,g,->,h,double,,double,,->,b,double,,->,a,double,,double,,->,f,->,c,\phi,\chi,deg,\psi,deg,,}
	\tikzcube{\assocfour}
	\]
	Extending this map to $\admcubepp 3 3 1$ then yields a marked 2-cube that witnesses the desired associativity.
\end{proof}

The following proposition is straightforward to verify.

\begin{prop}
	The assignment $X \mapsto ho_1 X$ extends to a functor from the category of comical sets to $\Cat$.
	Moreover there is a natural isomorphism $ho_1(X^\op) \cong (ho_1 X)^\op$. \qed
\end{prop}


\section{Triangulation}\label{triangulation}
In this section, we upgrade the triangulation adjunction described in \cref{subsec:cubical} to a marked version.
We start by recalling the basic combinatorics of simplicial cubes, which can be found in \cite[\textsection 5]{verity:weak-complicial-2}.
(Note however that our indexing is reversed from Verity's.)

Given an $r$-simplex $\phi$ in the simplicial set $(\Delta^1)^n$, we can define a function
\[
\{1,\dots,n\} \to \{1,\dots,r,\pm\infty\}
\]
by declaring
\[
i \mapsto \left\{\begin{array}{cl}
+\infty, & \pi_i \circ \phi(r) = 0,\\
p, & \pi_i \circ \phi(p-1) = 0~\text{and}~\pi_i \circ \phi(p) = 1,\\
-\infty, & \pi_i \circ \phi(0) = 1.
\end{array}\right.
\]
If we regard $\phi$ as an $r$-step walk on the $n$-cube with the $p$-th step connecting $\phi(p-1)$ to $\phi(p)$, the above function takes $i$ to the unique $p$ such that the $p$-th step moves in the $i$-th direction; it takes the value $+\infty$ if we never move in the $i$-th direction, and the value $-\infty$ if we have already moved in that direction before we start.

Conversely, any function $\{1,\dots,n\} \to \{1,\dots,r,\pm\infty\}$ determines a unique $r$-simplex in $(\Delta^1)^n$, so we will identify the $r$-simplices and these functions.

\begin{rmk}
	In what follows, we sometimes write such expressions as $p \pm k$ for $p \in \{1,\dots,r,\pm\infty\}$ and finite $k$.
	These expressions are to be interpreted as $p$ when $p \in \{\pm\infty\}$.
	We will never consider expressions involving more than one $\pm\infty$.
\end{rmk}

\begin{Def}
	We will write $\iota=\iota_n \colon  \{1,\dots,n\} \to \{1,\dots,n,\pm\infty\}$ for the inclusion regarded as an $n$-simplex in $(\Delta^1)^n$.
\end{Def}

We will think of any set of the form $\{1,\dots,r,\pm\infty\}$ as a linearly ordered set
\[
-\infty < 1 < \dots < r < +\infty.
\]
Note however that simplices $\phi \colon  \{1,\dots,n\} \to \{1,\dots,r,\pm\infty\}$ are not necessarily order-preserving.

\begin{prop}\label{simplicial-action}
	Under this identification, a simplicial operator $\alpha \colon  [q] \to [r]$ sends an $r$-simplex $\phi$ to the $q$-simplex $\phi   \alpha$ given by
	\[
	(\phi   \alpha)(i) = \left\{\begin{array}{cl}
	+\infty, & \phi(i) > \alpha(q),\\
	p, & \alpha(p-1) < \phi(i) \le \alpha(p),\\
	-\infty, & \phi(i) \le \alpha(0).
	\end{array}\right.
	\]
\end{prop}

\begin{eg}\label{partition}
	Again, let us think of an $r$-simplex $\phi$ as an $r$-step walk.
	Then the last face $\phi\faces r$ of $\phi$ moves in the $i$-th direction at exactly the same step as $\phi$ does except that it does not have an $r$-th step.
	This agrees with the following formula obtained using \cref{simplicial-action}:
	\[
	(\phi\faces r)(i) = \left\{\begin{array}{cl}
		+\infty, & \phi(i) = r,\\
		\phi(i), & \text{otherwise}.
	\end{array}\right.
	\]
	On the other hand, taking the $0$-th face decreases the index of each step by $1$, so we have
	\[
	(\phi\faces 0)(i) = \left\{\begin{array}{cl}
		-\infty, & \phi(i) = 1,\\
		\phi(i)-1, & \text{otherwise.}
	\end{array}\right.
	\]
	For $0 < k < n$, taking the $k$-th step merges the $k$-th and the $(k+1)$-st steps, so it does not affect the endpoints of the whole walk.
	When regarded as a function, this means that $\phi\faces k$ takes the values $\pm\infty$ on exactly the same inputs as $\phi$ does.
	However, some of the indices are shifted:
	\[
	(\phi\faces k)(i) = 
	\left\{\begin{array}{cl}
		\phi(i)-1, & k < \phi(i) \le r,\\
		\phi(i), & \text{otherwise.}
	\end{array}\right.
	\]
	
	For any $0 \le m \le r$, taking the last face $(r-m)$ times yields $\frontface{m}{r-m}$, so we have:
	\[
	(\phi   \frontface{m}{r-m})(i) = \left\{\begin{array}{cl}
	+\infty, & \phi(i) > m,\\
	\phi(i), & \phi(i) \le m
	\end{array}\right.
	\]
	Similarly, since taking the $0$-th face $m$ times yields $\backface{m}{r-m}$, we have
	\[
	(\phi   \backface{m}{r-m})(i) = \left\{\begin{array}{cl}
	\phi(i)-m, & \phi(i) > m,\\
	-\infty, & \phi(i) \le m.
	\end{array}\right.
	\]
\end{eg}

It is easy to verify the following proposition using \cref{simplicial-action}.

\begin{prop}\label{non-degenerate}
	An $r$-simplex $\phi$ in $(\Delta^1)^n$ is non-degenerate if and only if $\phi^{-1}(p) \neq \varnothing$ for each $1 \le p \le r$.
	More precisely, $\phi$ is degenerate at $p-1$ if and only if $\phi^{-1}(p) = \varnothing$.
\end{prop}

Now we upgrade the \emph{codomain} of the triangulation functor to a marked version.
More precisely, we first consider the functor $\Cube \to \PreComp$ associated (in the sense of \cref{GrandisMauriMonoid}) to the cubical monoid $\Delta^1$, where $\PreComp$ is considered to be monoidal with respect to the Gray tensor product $\pretensor$.
Its object part is thus given by $[1]^n \mapsto (\Delta^1)^{\pretensor n}$.
This functor induces a strong monoidal left adjoint $T \colon \cSet \to \PreComp$ with right adjoint $U$.
We first show that $(\Delta^1)^{\pretensor n} = (\Delta^1)^{\otimes n}$.


\begin{prop}\label{tensor-power}
	An $r$-simplex $\phi \colon  \{1,\dots,n\} \to \{1,\dots,r,\pm\infty\}$ in $(\Delta^1)^{\otimes n}$ is unmarked if and only if there exist
	\[
	1 \le i_1 < \dots < i_r \le n
	\]
	such that $\phi(i_p) = p$ for all $1 \le p \le r$.
	In particular, the only unmarked $n$-simplex in $(\Delta^1)^{\otimes n}$ is $\iota_n$.
\end{prop}
\begin{proof}
	This is proved in \cite[Obs.~27]{verity:weak-complicial-2} (with opposite indexing from ours).
\end{proof}

Using the above characterisation, we can indeed prove the following.

\begin{prop}
	The marked simplicial set $(\Delta^1)^{\otimes n}$ is pre-complicial for any $n \ge 0$.
\end{prop}
\begin{proof}
	Suppose for contradiction that there exists a map $\admdeltap r k \to (\Delta^1)^{\otimes n}$ that cannot be extended to $\admdeltapp r k$.
	Regard this map as an $r$-simplex $\phi \colon \{1,\dots,n\} \to \{1,\dots,r,\pm\infty\}$.
	Then $\phi\faces k$ is unmarked, so \cref{tensor-power} implies that there exist
	\[
	1 \le i_1 \le \dots \le i_{r-1} \le n
	\]
	such that:
	\begin{itemize}
		\item $\phi(i_p) = p$ for $1 \le p \le k-1$;
		\item $\phi(i_k) = k$ or $\phi(i_k) = k+1$; and
		\item $\phi(i_p) = p+1$ for $k+1 \le p \le r-1$.
	\end{itemize}
	But if $\phi(i_k) = k$ then the same sequence $i_1,\dots,i_{r-1}$ witnesses that $\phi\faces{k+1}$ is unmarked, and similarly if $\phi(i_k) = k+1$ then $\phi\faces{k-1}$ is unmarked.
	In either case, it contradicts with our assumption that $\phi$ corresponds to a map $\admdeltap r k \to (\Delta^1)^{\otimes n}$.
\end{proof}

Next we would like to upgrade the \emph{domain} of $T$ to a marked version too, by sending the marked $n$-cube to $\trunc{n-1}\bigl((\Delta^1)^{\otimes n}\bigr)$.
\cref{tensor-power} implies that this marked simplicial set may be obtained from $(\Delta^1)^{\otimes n}$ by marking $\iota_n$.
The following lemma shows that it is indeed an object in $\PreComp$.

\begin{lem}
	The marked simplicial set $\trunc{n-1} \bigl((\Delta^1)^{\otimes n}\bigr)$ is pre-complicial for any $n \ge 1$.
\end{lem}
\begin{proof}
	Suppose for contradiction that we are given a map $\phi \colon  \admdeltap{m}{k} \to \trunc{n-1}\bigl((\Delta^1)^{\otimes n}\bigr)$ that cannot be extended to $\admdeltapp{m}{k}$.
	Then $\phi$ must not factor through the pre-complicial set $(\Delta^1)^{\otimes n}$, so $\phi$ sends at least one of the marked, non-degenerate simplices in $\admdeltap{m}{k}$ to $\iota_n$.
	Since all simplices in $\admdeltap{m}{k}$ of dimension $>m$ are degenerate, it follows that $m \ge n$.
	On the other hand, we cannot have $m>n$ since $\phi   \faces k$ is unmarked in the $(n-1)$-trivial marked simplicial set $\trunc{n-1}\bigl((\Delta^1)^{\otimes n}\bigr)$.
	Thus we must have $m = n$ and $\phi = \iota_n$.
	But at least one of $\faces{k-1}$ and $\faces{k+1}$ is a well-defined face of $\admdeltap{m}{k}$, and it can be easily checked using \cref{tensor-power} that $\phi$ sends this face to an unmarked simplex.
	This is the desired contradiction.
\end{proof}

Thus we have defined the object part of $T:\Cube^+ \to \PreComp$, but we still need to define its value on the generating morphisms $\varphi^n$, $\zeta^n_i$, and $\xi^n_{i, \varepsilon}$, and verify the co-cubical identities.
The maps $T \varphi^n \colon T [1]^n \to T [1]^n_e$ are identity on the underlying simplicial sets and add the additional marking on $\iota_n$.
To define $T \zeta^n_i$ (resp.~$T \xi^n_{i, \varepsilon}$), notice that we must have $T\zeta^n_i T\varphi^n = T\sigma^n_i$ (resp.~$T\xi^n_{i\varepsilon} T\varphi^n = T\gamma^n_{i, \varepsilon}$).
Since $T\sigma^n_i$ (resp.~$T\gamma^n_{i, \varepsilon}$) sends the $n$-simplex $\iota_n$ to a degenerate (and hence marked) one, it must factor through $T\varphi^n$.
Moreover, since $T\varphi^n$ is (entire and hence) an epimorphism, this factorisation is unique, yielding a unique possible choice for $T \zeta^n_i$ (resp.~$T \xi^n_{i, \varepsilon}$).
Finally, to see that this definition satisfies the additional identities, we note that these involving $\varphi$ are clear, whereas the remaining ones can be reduced to the usual cubical identities by precomposing with $\varphi$ and using the fact that it is an epimorphism.

Hence we obtain a left adjoint functor $T$ from structurally marked cubical sets to pre-complicial sets. Moreover, the right adjoint $U$ takes values in marked cubical sets, because the map $\cube n \to \mcube n$ is carried by $T$ to an epimorphism.
Thus by restricting the domain of $T$, we have constructed an adjunction $T \dashv U$ between marked cubical sets and pre-complicial sets.
In the remainder of the paper, we show that $T$ is strong monoidal with respect to either version of the Gray tensor products and moreover left Quillen with respect to suitable model structures.
We will make use of the following observation.

\begin{prop}
	There are isomorphisms $T(X^\op) \cong (T X)^\op$ natural in $X \in \mcSet$.
\end{prop}
\begin{proof}
	Since both $X \mapsto T(X^\op)$ and $X \mapsto T(X)^\op$ are cocontinuous, it suffices to verify the assertion for $X = \cube n$ for $n \ge 0$ and $X = \mcube n$ for $n \ge 1$.
	The component at each $\cube n$ is simply given by
	\[
	T\bigl((\cube n)^\op\bigr) = T(\cube n) = (\Delta^1)^{\otimes n} = \bigl((\Delta^1)^\op\bigr)^{\otimes n} \cong \bigl((\Delta^1)^{\otimes n}\bigr)^\op = T(\cube n)^\op
	\]
	(where the isomorphism is induced by the anti-monoidality of $(-)^\op \colon \mSet \to \mSet$ \cite[Lemma 131]{verity:complicial}) and the component at each $\mcube n$ is then obtained by applying $\trunc{n-1}$.
	
	It remains to check that these components are natural.
	Since the forgetful functor $\mSet \to \sSet$ is faithful, we may instead check the naturality of the whiskering:
	\[
	\begin{tikzpicture}
		\node at (0,0) {$\cube +$};
		\node at (3,0) {$\mSet$};
		\node at (5,0) {$\sSet$};
		\draw[->] (0.2,0.2) .. controls (0.7,0.7) and (2,0.7) .. (2.5, 0.2);
		\draw[->] (0.2,-0.2) .. controls (0.7,-0.7) and (2,-0.7) .. (2.5, -0.2);
		\node[scale = 0.8] at (1.35,0.9) {$T\bigl((-)^\op\bigr)$};
		\node[scale = 0.8] at (1.35,-0.9) {$T(-)^\op$};
		\draw[->, double] (1.35, 0.4) -- (1.35, -0.4);
		\draw[->] (3.5,0) -- (4.5,0);
	\end{tikzpicture}
	\]
	Now it is straightforward to check that this (potentially un-natural) transformation may also be written as:
	\[
	\begin{tikzpicture}
		\node at (0,0) {$\cube +$};
		\node at (2,0) {$\cube {}$};
		\node at (5,0) {$\sSet$};
		\draw[->] (0.3,0) -- (1.7,0);
		\draw[->] (2.2,0.2) .. controls (2.7,0.7) and (4,0.7) .. (4.5, 0.2);
		\draw[->] (2.2,-0.2) .. controls (2.7,-0.7) and (4,-0.7) .. (4.5, -0.2);
		\node[scale = 0.8] at (3.35,0.9) {$T\bigl((-)^\op\bigr)$};
		\node[scale = 0.8] at (3.35,-0.9) {$T(-)^\op$};
		\draw[->, double] (3.35, 0.4) -- (3.35, -0.4);
	\end{tikzpicture}
	\]
	where the first factor forgets the marking and the second factor (potentially un-natural transformation) is the unmarked analogue of our desired natural isomorphism.
	But in the unmarked case, we know that both $X \mapsto T(X^\op)$ and $X \mapsto T(X)^\op$ are anti-monoidal, and moreover it is straightforward to manually check that the naturality of its restriction to the full subcategory spanned by $\cube 0$, $\cube 1$ and $\cube 2$.
	Thus the desired naturality follows from \cref{GrandisMauriMonoid}.
\end{proof}

\section{Triangulating Gray tensor product} \label{sec:triangulating-gray}

We now prove that the triangulation functor is strong monoidal with respect to either version of the Gray tensor product.
We begin by describing a proof strategy that will be used in both the lax and the pseudo cases.

\subsection{Proof strategy}\label{strategy}

The proof typically reduces to showing an entire inclusion $A \incl B$ to be a complicial marking extension where $A,B$ are certain entire supersets of $(\Delta^1)^{\otimes N}$.
(The integer $N$ will be of the form $N = m+n$ in the actual proofs, but this is irrelevant in this subsection.)
There are three kinds of simplices of interest, namely those that are:
\begingroup
\renewcommand{\theenumi}{\roman{enumi}}
\begin{enumerate}
	\item marked in $(\Delta^1)^{\otimes N}$;
	\item marked in $A$ but not in $(\Delta^1)^{\otimes N}$; and
	\item marked in $B$ but not in $A$.
\end{enumerate}
\endgroup
The simplices of type (i) are characterized by \cref{tensor-power}.
The first step of the proof will be to (define suitable $A,B$ and) characterize simplices of type (ii) and (iii).

Before proceeding, we need the following definitions.
\begin{Def}
	For any $r$-simplex $\phi : \{1,\dots,N\} \to \{1,\dots,r,\pm\infty\}$ in $(\Delta^1)^N$, define:
	\begin{align*}
	\calD(\phi) &= \bigl|\phi^{-1}(\{1,\dots,r\})\bigr|-r,\\
	\calO(\phi) &= \bigl\{(i,j) \in \{1,\dots,N\}^2 : i<j, \phi(i)<\phi(j)\bigr\}.
	\end{align*}
\end{Def}
The integer $\calD(\phi)$ measures how ``diagonal'' $\phi$ is, and the set $\calO(\phi)$ measures how ``in order'' $\phi$ is.

We complicially extend the marking on $A$ to those simplices $\phi$ of type (iii) by nested induction on $\calD(\phi)$ and $|\calO(\phi)|$.
More precisely, consider the lexicographical ordering on $\mathbb{Z}\times\mathbb{N}$ so that $(u_1,v_1) \le (u_2,v_2)$ if and only if:
\begin{itemize}
	\item $u_1 < u_2$; or
	\item $u_1=u_2$ and $v_1 \le v_2$.
\end{itemize}
For each $(u,v) \in \mathbb{Z}\times\mathbb{N}$, let $A(u,v)$ denote the marked simplicial set obtained from $A$ by marking those simplices $\phi$ such that $\phi$ is marked in $B$ and $\bigl(\calD(\phi),|\calO(\phi)|\bigr) < (u,v)$.
Then:
\begin{itemize}
	\item $A(u_1,v_1)$ is an entire subset of $A(u_2,v_2)$ for any $(u_1,v_1) \le (u_2,v_2)$;
	\item $\colim_vA(u,v) = A(u+1,0)$ for any $u \in \mathbb{Z}$;
	\item $\colim_{u,v}A(u,v) = B$; and
	\item $A(0,0) = A$ (by \cref{non-degenerate}).
\end{itemize}
Now we assume the following.
\begin{assume}\label{diagonality-zero}
	Any marked simplex $\phi$ in $B$ with $\calD(\phi) = 0$ is marked in $A$.
\end{assume}
Then we may upgrade the last bulleted item to $A(1,0) = A$.
Thus to prove that $A \to B$ is a complicial marking extension, it suffices to exhibit the map $A(u,v) \to A(u,v+1)$ as a complicial marking extension for each $(u,v) \ge (1,0)$.

So fix $(u,v) \ge (1,0)$ and suppose that we are given an $r$-simplex $\phi$ of type (iii) with $\bigl(\calD(\phi),|\calO(\phi)|\bigr) = (u,v)$.
Then in particular $\calD(\phi) \ge 1$.
So by the pigeon hole principle, we can choose $1 \le p_\phi \le r$ such that $|\phi^{-1}(p_\phi)| \ge 2$.
Let $i_\phi = \min \phi^{-1}(p_\phi)$.
Let $\tilde \phi$ be the $(r+1)$-simplex given by
\[
\tilde \phi(i) = \left\{\begin{array}{cl}
\phi(i), & \phi(i) \le p_\phi\;\text{and}\;i \neq i_\phi,\\
\phi(i)+1, & \phi(i)>p_\phi\;\text{or}\;i = i_\phi.
\end{array}\right.
\]
Observe that we have $\tilde \phi   \faces {p_\phi} = \phi$.
We wish to show that this simplex $\tilde \phi$ extends to $\admdeltap{r+1}{p_\phi}$:
\[
\begin{tikzcd}
{\Delta^{r+1}}
\arrow [r, "\tilde \phi"]
\arrow [d, hook] &
A(u,v) \\
{\admdeltap{r+1}{p_\phi}}
\arrow [ru, dashed, "\exists", swap] &
\end{tikzcd}
\]
Assuming this fact, we can deduce that we have a pushout square
\[
\begin{tikzcd}[row sep = large]
\coprod \admdeltap{r+1}{p_\phi}
\arrow [r]
\arrow [d, hook] &
A(u,v)
\arrow [d] \\
\coprod \admdeltapp{r+1}{p_\phi}
\arrow [r] &
A(u,v+1)
\end{tikzcd}
\]
where the coproducts are taken over all $r$-simplices $\phi$ of type (iii) with $\bigl(\calD(\phi),|\calO(\phi)|\bigr) = (u,v)$ (for various $r$) and the horizontal maps are induced by $\tilde \phi$.

The following lemma implies that $\tilde \phi$ at least extends to $\admdelta{r+1}{p_\phi}$.
\begin{lem}
	Let $\alpha \colon  [q] \to [r+1]$ be a face operator with $\{p_\phi,p_\phi\pm1\} \subset \im\alpha$.
	Then $\tilde \phi   \alpha$ is marked in $(\Delta^1)^{\otimes N}$.
\end{lem}
\begin{proof}
	Let $p \in [q]$ be the necessarily unique element with  $\alpha(p) = p_\phi$.
	Then we must have $\alpha(p-1) = p_\phi-1$ and $\alpha(p+1) = p_\phi+1$.
	Now one can check using \cref{simplicial-action} and the minimality of $i_\phi$ that $(\tilde \phi   \alpha)^{-1}(p+1) = \{i_\phi\}$ and moreover any $1 \le i \le N$ satisfying $(\tilde \phi   \alpha)(i) = p$ must also satisfy $i > i_\phi$.
	Thus $\tilde \phi   \alpha$ is marked in $(\Delta^1)^{\otimes N}$ by \cref{tensor-power}.
\end{proof}

Therefore it remains to prove that the faces $\chi = \tilde \phi   \faces{p_\phi-1}$ and $\psi = \tilde \phi   \faces{p_\phi+1}$ are marked in $A(u,v)$.
First, we describe these simplices explicitly.

\begin{lem}\label{neighbours-description}
	The simplex $\chi$ is given by
	\[
	\chi(i) = \left\{\begin{array}{cl}
	-\infty, & \phi(i) = p_\phi\;\text{and}\;i \neq i_\phi,\\
	\phi(i), & \text{otherwise}.
	\end{array}\right.
	\]
	if $p_\phi = 1$ and
	\[
	\chi(i) = \left\{\begin{array}{cl}
	p_\phi-1, & \phi(i) = p_\phi\;\text{and}\;i \neq i_\phi,\\
	\phi(i), & \text{otherwise}
	\end{array}\right.
	\]
	if $p_\phi \ge 2$.
	The simplex $\psi$ is given by
	\[
	\psi(i) = \left\{\begin{array}{cl}
	+\infty, & i = i_\phi\\
	\phi(i), & \text{otherwise}
	\end{array}\right.
	\]
	if $p_\phi = r$ and
	\[
	\psi(i) = \left\{\begin{array}{cl}
	p_\phi+1, & i = i_\phi\\
	\phi(i), & \text{otherwise}
	\end{array}\right.
	\]
	if $p_\phi < r$.
\end{lem}
\begin{proof}
	This is a routine application of \cref{simplicial-action}.
\end{proof}

These explicit descriptions allow us to prove the following.

\begin{lem}
	The simplices $\chi$ and $\psi$ satisfy
	\begin{gather*}
	\bigl(\calD(\chi),|\calO(\chi)|\bigr) < \bigl(\calD(\phi),|\calO(\phi)|\bigr),\\
	\bigl(\calD(\psi),|\calO(\psi)|\bigr) < \bigl(\calD(\phi),|\calO(\phi)|\bigr).
	\end{gather*}
\end{lem}
\begin{proof}
	If $p_\phi=1$ then clearly $\calD(\chi) < \calD(\phi)$.
	
	Suppose $p_\phi \ge 2$.
	Then we have $\calD(\chi) =\calD(\phi)$.
	We claim that $\calO(\chi)$ is a proper subset of $\calO(\phi)$.
	Indeed, it can be seen from \cref{neighbours-description} that if a pair $(i,j)$ satisfies $\phi(i) \ge \phi(j)$ and $\chi(i) < \chi(j)$ then we must have $\phi(i)=\phi(j)=p_\phi$ and $i \neq i_\phi = j$.
	But then the minimality of $i_\phi$ implies $i > j$, and this shows that there is no pair $(i,j)$ in $\calO(\chi) \setminus \calO(\phi)$.
	On the other hand, since $\phi$ is unmarked in $(\Delta^1)^{\otimes N}$, there exist $1 \le i_1 \le \dots \le i_r \le N$ such that $\phi(i_p) = p$ for $1 \le p \le r$.
	It is straightforward to check that the pair $\bigl(i_{p_\phi-1},\max\phi^{-1}(p_\phi)\bigr)$ is then in $\calO(\phi) \setminus \calO(\chi)$.
	Therefore $\calO(\chi)$ is a proper subset of $\calO(\phi)$, and this proves the lexicographical inequality concerning $\chi$.
	
	The simplex $\psi$ can be treated dually.
\end{proof}

The last missing piece of the proof (that $A \to B$ is a complicial marking extension) is the following.

\begin{assume}\label{neighbours-are-marked}
	The simplices $\chi$ and $\psi$ are marked in $B$.
\end{assume}

This completes the proof strategy.
(Whether \cref{diagonality-zero,neighbours-are-marked} hold depends on the exact definitions of $A$ and $B$, so there is no general strategy for verifying them.)

\subsection{Triangulating the lax Gray tensor product}
The goal of this subsection is to prove the following theorem.
\begin{thm}\label{T-strong-monoidal-lax}
	The adjunction $T \dashv U$ is monoidal with respect to the lax Gray tensor products.
	Equivalently, $T \colon  (\mcSet,\otimes) \to (\PreComp,\pretensor)$ is strong monoidal.
\end{thm}

Fix $m\ge 1$ and $n \ge 0$.
Observe that
\[
\begin{tikzcd}[row sep = large]
\coprod \cube{m+k}
\arrow [r]
\arrow [d, swap, "\coprod \varphi"] &
\cube{m+n}
\arrow [d]\\
\coprod \mcube{m+k}
\arrow [r] &
\mcube m \otimes \cube n
\end{tikzcd}\]
is a pushout square in $\mcSet$ where the coproducts are taken over all face maps $[1]^k \to [1]^n$.
This pushout is preserved by $T$, so the right square in
\[
\begin{tikzcd}[row sep = large]
\coprod \Delta^{m+k}
\arrow [r, "\coprod \iota_{m+k}"]
\arrow [d] &
\coprod (\Delta^1)^{\otimes (m+k)}
\arrow [r]
\arrow [d] &
(\Delta^1)^{\otimes(m+n)}
\arrow [d]\\
\coprod \mDelta{m+k}
\arrow [r] &
\coprod \trunc{m+k-1}\bigl((\Delta^1)^{\otimes (m+k)}\bigr)
\arrow [r] &
T(\mcube m \otimes \cube n)
\end{tikzcd}\]
is a pushout square in $\PreComp$ where the upper horizontal map is induced by $\id_{(\Delta^1)^{\otimes m}} \otimes T(\phi)$ for various face maps $\phi \colon  [1]^k \to [1]^n$.
The left square is also a pushout by \cref{tensor-power}, so the pasted square is a pushout too.
In this subsection, we define $A$ to be the corresponding pushout in $\mSet$ (and not in $\PreComp$):
\[
\begin{tikzcd}[row sep = large]
\coprod \Delta^{m+k}
\arrow [r]
\arrow [d]
\arrow [dr, phantom, "\mathrm{p.o.}"] &
(\Delta^1)^{\otimes(m+n)}
\arrow [d]\\
\coprod \mDelta{m+k}
\arrow [r] &
A
\end{tikzcd}
\]
so that its pre-complicial reflection $A^\precomp$ is precisely $T(\mcube m \otimes \cube n)$.

Now we give combinatorial characterisations of marked simplices in $A$ and in $T(\mcube m)\otimes T(\cube n)$.

\begin{lem}\label{marked-in-A}
	An $r$-simplex $\phi$ is marked in $A$ but not in $(\Delta^1)^{\otimes(m+n)}$ if and only if $r \ge m$, $\phi(i) = i$ for all $1 \le i \le m$ and the restriction
	\[
	\phi^{-1}(\{1,\dots,r\}) \to \{1,\dots,r\}
	\]
	of $\phi$ is an isomorphism of linearly ordered sets.
\end{lem}
\begin{proof}
	Compute the colimit.
\end{proof}

\begin{lem}\label{marked-in-TT}
	Let $\phi$ be an unmarked $r$-simplex in $(\Delta^1)^{\otimes (m+n)}$.
	Then $\phi$ is marked in $T(\mcube m)\otimes T(\cube n)$ if and only if:
	\begin{enumerate}
		\item $r \ge m$;
		\item $\phi(i) = i$ for all $1 \le i \le m$; and
		\item there does NOT exist a sequence $m < i_m < i_{m+1} < \dots < i_r \le m+n$ such that $\phi(i_p) = p$ for all $m \le p \le r$.
	\end{enumerate}
\end{lem}
\begin{proof}
	Write $\phi = (\phi_1,\phi_2)$ for $\phi$ regarded as a simplex in the product simplicial set $(\Delta^1)^m \times (\Delta^1)^n$.
	That is, $\phi_1 \colon  \{1,\dots,m\} \to \{1,\dots,r,\pm\infty\}$ and $\phi_2 \colon  \{1,\dots,n\} \to \{1,\dots,r,\pm\infty\}$ are respectively given by $\phi_1(i) = \phi(i)$ and $\phi_2(i) = \phi(m+i)$.
	
	It follows from the definitions of $\otimes$ and $T(\mcube m)$ that $\phi$ (which we are assuming to be unmarked in $(\Delta^1)^{\otimes (m+n)}$) is marked in $T(\mcube m) \otimes T(\cube n)$ if and only if:
	\begin{itemize}
		\item[(a)] $r \ge m$;
		\item[(b)] $(\phi_1,\phi_2)$ is $q$-cloven for all $q$ except for $q = m$;
		\item[(c)] $\phi_1   \frontface{m}{r-m} = \iota_m$; and
		\item[(d)] $\phi_2   \backface{m}{r-m}$ is unmarked in $(\Delta^1)^{\otimes n}$.
	\end{itemize}
	The clauses (a) and (c) here clearly correspond respectively to (1) and (2) in the lemma.
	Since we are assuming $\phi$ to be unmarked in $(\Delta^1)^{\otimes (m+n)}$, \cref{tensor-power} implies that there exists a sequence $1 \le j_1 < \dots < j_r \le m+n$ such that $\phi(j_p) = p$ for all $1 \le p \le r$.
	Note that the strict inequalities imply $j_{m+1} > m$.
	One can now check using \cref{partition} that $(\phi_2   \backface{m}{r-m})(j_p-m) = p-m$ for all $m+1 \le p \le r$.
	Thus the clause (d) is in fact redundant by \cref{tensor-power}.
	
	It remains to check that, assuming (a), (c) and (d), the clauses (3) and (b) are equivalent.
	Note that $(\phi_1,\phi_2)$ is $q$-cloven for any $m < q \le r$ since the $q$-simplex $\phi_1   \frontface{q}{r-q}$ in the simplicial set $(\Delta^1)^m$ must be degenerate. 
	For $0 \le q <m$, since we are assuming (c), $\phi_1 \frontface{q}{r-q}=\iota_{q}$ is unmarked in $T(\mcube m)$.
	So (b) is equivalent to $\phi_2 \backface{q}{r-q}$ being marked in $(\Delta^1)^{\otimes n}$ for all $0 \le q < m$.
	By \cref{partition,tensor-power}, this latter condition for fixed $q$ is equivalent to the NON-existence of a sequence
	\[
	m<i_{q+1}<\dots<i_r\le m+n
	\]
	such that $\phi(i_p) = p$ for all $q+1 \le p \le r$.
	Clearly the non-existence for $q=m-1$, which is precisely (3), implies the non-existence for all other values of $q$.
	This completes the proof.
\end{proof}

By combining \cref{tensor-power} and \cref{marked-in-TT}, we obtain the following.
\begin{lem}\label{unmarked-in-TT}
	An $r$-simplex $\phi$ in $T(\mcube m) \otimes T(\cube n)$ with $r \ge m$ is unmarked if and only if there exist
	\[
	1 \le i_1 < \dots < i_r \le m+n
	\]
	such that $\phi(i_p) = p$ for all $1 \le p \le r$ and moreover $i_m > m$.
\end{lem}
\begin{proof}
	Let $\phi$ be an unmarked $r$-simplex in $(\Delta^1)^{\otimes (m+n)}$ with $r \ge m$.
	Note that $\phi$ is unmarked in $T(\mcube m) \otimes T(\cube n)$ if and only if it violates either \cref{marked-in-TT}(2) or (3).
	
	The ``if'' direction is easy since the existence of a sequence satisfying the condition stated in the lemma would immediately contradict \cref{marked-in-TT}(3).
	
	For the ``only if'' direction, assume that $\phi$ is unmarked in $T(\mcube m) \otimes T(\cube n)$.
	Recall that by \cref{tensor-power} there exist
	\[
	1 \le j_1 < \dots < j_r \le m+n
	\]
	such that $\phi(j_p) = p$ for all $1 \le p \le r$.
	If $j_m > m$, then simply taking $i_p = j_p$ for all $p$ would yield the desired sequence.
	So assume $j_m = m$.
	Then since the inequalities $j_1<\dots<j_m$ are strict, we must have $j_p = p$ for all $1 \le p \le m$.
	Thus $\phi$ cannot violate \cref{marked-in-TT}(2), so it must violate (3).
	That is, there exist $m < i_m < i_{m+1} < \dots < i_r \le m+n$ such that $\phi(i_p) = p$ for all $m \le p \le r$.
	We then obtain the desired sequence by taking $i_p = j_p$ for $1 \le p \le m-1$.
\end{proof}

\begin{lem}\label{A-to-TT}
	There is a complicial marking extension $A \to T(\mcube m)\otimes T(\cube n)$ that commutes with the evident inclusions of $(\Delta^1)^{\otimes(m+n)}$.
\end{lem}

\begin{proof}
	We apply the proof strategy from \cref{strategy} with $B = T(\mcube m)\otimes T(\cube n)$.
	
	One can easily check using \cref{marked-in-TT,marked-in-A} that any marked simplex $\phi$ in
	$T(\mcube m) \otimes T(\cube n)$ with $\calD(\phi) = 0$ must also be marked in $A$.
	This verifies \cref{diagonality-zero}.
	
	To verify \cref{neighbours-are-marked}, let $\phi$ be an $r$-simplex that is marked in $T(\mcube m) \otimes T(\cube n)$ but not in $A$.
	Then we necessarily have $r \ge m$ by \cref{marked-in-TT}.
	
	Consider the simplex $\chi = \tilde \phi   \faces{p_\phi-1}$.
	Suppose for contradiction that $\chi$ is unmarked in $T(\mcube m) \otimes T(\cube n)$.
	Then by \cref{unmarked-in-TT} there exist $1 \le i_1 < \dots < i_r \le m+n$ such that $\chi(i_p) = p$ for all $1 \le p \le r$ and $i_m > m$.
	\begin{itemize}
		\item If $p_\phi = 1$, then we also have $\phi(i_p) = p$ for all $p$ by \cref{neighbours-description}, thus $\phi$ is unmarked in $T(\mcube m) \otimes T(\cube n)$.
		This is the desired contradiction.
		\item Suppose $p_\phi \ge 2$.
		We claim that $\phi(i_p) = p$ holds for all $p$ in this case too.
		According to \cref{neighbours-description}, the only thing we must check is that $\phi(i_{p_\phi-1}) = p_\phi-1$ holds (as opposed to $\phi(i_{p_\phi-1}) = p_\phi$).
		To see that this is indeed the case, observe that $\chi^{-1}(p_\phi) = \{i_\phi\}$ by \cref{neighbours-description}.
		Thus we must have $i_{p_\phi} = i_\phi$.
		Since $i_{p_\phi-1} < i_{p_\phi}$, the minimality of $i_\phi$ implies that $i_{p_\phi-1} \notin \phi^{-1}(p_\phi)$.
		Therefore we have obtained the desired contradiction.
	\end{itemize}
	
	The simplex $\psi = \tilde \phi   \faces{p_\phi+1}$ can be similarly checked to be marked in $T(\mcube m) \otimes T(\cube n)$.
	This completes the proof.
\end{proof}

\begin{proof}[Proof of \cref{T-strong-monoidal-lax}]
	Since both $T(-\otimes-)$ and $T(-)\pretensor T(-)$ preserve colimits in each variable, it suffices to check the existence of natural isomorphisms $T(X \otimes Y)\cong T(X)\pretensor T(Y)$ for $X,Y$ generic (possibly marked) cubes.
	
	By construction of $T$, we have $T(\cube m \otimes \cube n) \cong T(\cube m) \pretensor T(\cube n)$ for any $m,n \ge 0$.
	
	For any $m \ge 1$ and $n \ge 0$, we may obtain an isomorphism $T(\mcube m \otimes \cube n) \cong T(\mcube m)\pretensor T(\cube n)$ by reflecting the complicial marking extension of \cref{A-to-TT} into $\PreComp$.
	Dually, we have $T(\cube m \otimes \mcube n) \cong T(\cube m) \pretensor T(\mcube n)$ for any $m \ge 0$ and $n \ge 1$.
	
	Let $m,n \ge 1$.
	Observe that the left square below is a pushout in $\mcSet$ by \cref{Gray-tensor-of-monos}(3):
	\[
	\begin{tikzcd}[row sep = large]
	\cube m \otimes \cube n
	\arrow [r]
	\arrow [d] &
	\mcube m \otimes \cube n
	\arrow [d] &
	T(\cube m \otimes \cube n)
	\arrow [r]
	\arrow [d] &
	T(\mcube m \otimes \cube n)
	\arrow [d] \\
	\cube m \otimes \mcube n
	\arrow [r] &
	\mcube m \otimes \mcube n &
	T(\cube m \otimes \mcube n)
	\arrow [r] &
	T(\mcube m \otimes \mcube n)
	\end{tikzcd}
	\]
	Since $T$ is cocontinuous, it follows that the right square is a pushout in $\PreComp$.
	On the other hand, since both $T(\cube m) \to T(\mcube m)$ and $T(\cube n) \to T(\mcube n)$ are entire, the square below is a pushout in $\PreComp$ by \cite[Lem.~140]{verity:complicial}:
	\[
	\begin{tikzcd}[row sep = large]
	T(\cube m) \pretensor T(\cube n)
	\arrow [r]
	\arrow [d] &
	T(\mcube m) \pretensor T(\cube n)
	\arrow [d] \\
	T(\cube m) \pretensor T(\mcube n)
	\arrow [r] &
	T(\mcube m) \pretensor T(\mcube n)
	\end{tikzcd}
	\]
	Thus by comparing the two pushout squares in $\PreComp$, we obtain $T(\mcube m \otimes \mcube n) \cong T(\mcube m) \pretensor T(\mcube n)$.
	The naturality of these isomorphisms is evident, and this completes the proof.
\end{proof}

\subsection{Triangulating the pseudo Gray tensor product}
The goal of this subsection is to prove the following theorem.
\begin{thm}\label{T-strong-monoidal-pseudo}
	The adjunction $T \dashv U$ is monoidal with respect to the pseudo Gray tensor products.
	Equivalently, $T \colon  (\mcSet,\pseudo) \to (\PreComp,\pseudo)$ is strong monoidal.
\end{thm}

Fix $m,n \ge 1$.
By \cref{comparing-Gray-tensors}, the square
\[
\begin{tikzcd}
\coprod
\cube {k+\ell}
\arrow [r]
\arrow [d] &
\cube {m+n}
\arrow [d]\\
\coprod \mcube{k+\ell}
\arrow [r] &
\cube m \pseudo \cube n
\end{tikzcd}
\]
is a pushout in $\mcSet$ where the coproducts are taken over all pairs of face maps $\cube k \to \cube m$ and $\cube \ell \to \cube n$ such that $k,\ell \ge 1$.
This pushout is preserved by $T$, so the right square in
\[
\begin{tikzcd}[row sep = large]
\coprod
\Delta^{k+\ell}
\arrow [r, "\coprod \iota_{k+\ell}"]
\arrow [d] &
\coprod
(\Delta^1)^{\otimes (k+\ell)}
\arrow [r]
\arrow [d] &
(\Delta^1)^{\otimes (m+n)}
\arrow [d]\\
\coprod
\mDelta{k+\ell}
\arrow [r] &
\coprod
\trunc{k+\ell-1}\bigl((\Delta^1)^{\otimes(k+\ell)}\bigr)
\arrow [r] &
T(\cube m \pseudo \cube n)
\end{tikzcd}
\]
is a pushout square in $\PreComp$.
The left square is also a pushout by \cref{tensor-power}, so the pasted square is a pushout too.
In this subsection, we define $A$ to be the corresponding pushout in $\mSet$ (and not in $\PreComp$):
\[
\begin{tikzcd}[row sep = large]
\coprod \Delta^{k+\ell}
\arrow [r]
\arrow [d]
\arrow [dr, phantom, "\mathrm{p.o.}"] &
(\Delta^1)^{\otimes(m+n)}
\arrow [d]\\
\coprod \mDelta{k+\ell}
\arrow [r] &
A
\end{tikzcd}
\]
so that its pre-complicial reflection $A^\precomp$ is precisely $T(\cube m \pseudo \cube n)$.

\begin{lem}\label{marked-in-Ap}
	An $r$-simplex $\phi$ is marked in $A$ but not in $(\Delta^1)^{\otimes(m+n)}$ if and only if the restriction
	\[
	\phi^{-1}(\{1,\dots,r\}) \to \{1,\dots,r\}
	\]
	of $\phi$ is an isomorphism of linearly ordered sets and moreover $\phi^{-1}(\{1,\dots,r\})$ intersects both $\{1,\dots,m\}$ and $\{m+1,\dots,m+n\}$.
\end{lem}
\begin{proof}
	Compute the colimit.
\end{proof}

\begin{lem}\label{unmarked-in-TpT}
	An $r$-simplex $\phi$ in $T(\cube m) \pseudo T(\cube n)$ is unmarked if and only if there exist either
	\[
	1 \le i_1 < \dots < i_r \le m
	\]
	or
	\[
	m+1 \le i_1 < \dots < i_r \le m+n
	\]
	such that $\phi(i_p) = p$ for all $1 \le p \le r$.
\end{lem}
\begin{proof}
	Since $\pseudo$ is the categorical product on $\PreComp$, $\phi$ is marked if and only if both $\pi_1(\phi)$ and $\pi_2(\phi)$ are marked.
	Equivalently, $\phi$ is unmarked if and only if either $\pi_1(\phi)$ or $\pi_2(\phi)$ is unmarked.
	Thus the assertion follows from \cref{tensor-power}.
\end{proof}

\begin{lem}\label{Ap-to-TpT}
	There is a complicial marking extension $A \to T(\cube m)\pseudo T(\cube n)$ that commutes with the evident inclusions of $(\Delta^1)^{\otimes(m+n)}$.
\end{lem}
\begin{proof}
	We apply the proof strategy from \cref{strategy} with $B = T(\cube m)\pseudo T(\cube n)$.
	
	One can easily check using \cref{tensor-power,marked-in-Ap,unmarked-in-TpT} that any marked simplex $\phi$ in $T(\cube m)\pseudo T(\cube n)$ with $\calD(\phi) = 0$ must also be marked in $A$.
	This verifies \cref{diagonality-zero}.
	
	To see that \cref{neighbours-are-marked} holds for $\chi$, suppose for contradiction that $\chi$ is unmarked in $T(\cube m)\pseudo T(\cube n)$.
	By \cref{unmarked-in-TpT}, this unmarked-ness is witnessed by a sequence $i_1,\dots,i_r$, but then the same sequence can be checked to witness that $\phi$ is unmarked in $T(\cube m)\pseudo T(\cube n)$.
	The details are similar to the corresponding part in the proof of \cref{A-to-TT}.
	\cref{neighbours-are-marked} for $\psi$ can be checked similarly.
\end{proof}

Let $m \ge 1$ and $n \ge 0$.
Observe that the square below is a pushout in $\mcSet$:
\[
\begin{tikzcd}
\coprod \cube m
\arrow [r]
\arrow [d] &
\cube m \pseudo \cube n
\arrow [d] \\
\coprod \mcube m
\arrow [r] &
\mcube m \pseudo \cube n
\end{tikzcd}
\]
where the coproducts are taken over all $[1]^0 \to [1]^n$.
This pushout is preserved by $T$, so the right square in 
\[
\begin{tikzcd}[row sep = large]
\coprod \Delta^m
\arrow [d]
\arrow [r, "\coprod \iota_m"] &
\coprod T(\cube m)
\arrow [r]
\arrow [d] &
T(\cube m \pseudo \cube n)
\arrow [d]\\
\coprod \mDelta{m}
\arrow [r] &
\coprod T(\mcube m)
\arrow [r] &
T(\mcube m \pseudo \cube n)
\end{tikzcd}
\]
is a pushout in $\PreComp$.
The left square is also a pushout by \cref{tensor-power}, so the pasted square is a pushout too.
Let $A'$ denote the ``corresponding'' pushout in $\mSet$ (and not in $\PreComp$):
\[
\begin{tikzcd}[row sep = large]
\coprod \Delta^m
\arrow [r]
\arrow [d]
\arrow [dr, phantom, "\mathrm{p.o.}"] &
A
\arrow [d]\\
\coprod \mDelta m
\arrow [r] &
A'
\end{tikzcd}
\]
so that its pre-complicial reflection $(A')^\precomp$ is precisely $T(\mcube m \pseudo \cube n)$.
\begin{lem}\label{marked-in-Ap'}
	The marked simplicial set $A'$ is obtained from $A$ by marking those $m$-simplices $\phi$ such that $\phi(i) = i$ for $1 \le i \le m$ and $\phi(i) \in \{\pm\infty\}$ for $i > m$.	
\end{lem}

The unmarked simplices in $B' = T(\mcube m)\pseudo T(\cube n)$ admit a characterization similar to \cref{unmarked-in-TpT}.

\begin{lem}\label{unmarked-in-TpT'}
	An $r$-simplex in $T(\mcube m) \pseudo T(\cube n)$ with $r \neq m$ is unmarked if and only if it is unmarked in $T(\cube m) \pseudo T(\cube n)$.
	An $m$-simplex $\phi$ in $T(\mcube m) \pseudo T(\cube n)$ is unmarked if and only if there exist
	\[
	m+1 \le i_1 < \dots < i_m \le m+n
	\]
	such that $\phi(i_p) = p$ for all $1 \le p \le m$.
\end{lem}

\begin{lem}\label{Ap'-to-TPT'}
	There is a complicial marking extension $A' \to T(\mcube m) \pseudo T(\cube n)$ that commutes with the evident inclusions of $(\Delta^1)^{\otimes(m+n)}$.
\end{lem}
\begin{proof}
	We first check that \cref{diagonality-zero} holds.
	Let $\phi$ be a marked $r$-simplex in $T(\mcube m) \pseudo T(\cube n)$ with $\mathcal{D}(\phi) = 0$.
	Note that, if $\phi$ is marked in $T(\cube m) \pseudo T(\cube n)$ then we already know that it is marked in $A$ (and so in $A'$ too).
	So suppose otherwise.
	Then we can see from \cref{unmarked-in-TpT,unmarked-in-TpT'} that we must have $r=m$ and a sequence
	\[
	1 \le i_1 < \dots < i_m \le m
	\]
	such that $\phi(i_p) = p$ for all $1 \le p \le m$.
	It is then easy to check that $\phi$ is one of the extra marked simplices described in \cref{marked-in-Ap'}.
	
	For \cref{neighbours-are-marked}, we assume for contradiction that $\chi$ or $\psi$ is unmarked, obtain a sequence using \cref{unmarked-in-TpT'}, and deduce that $\phi$ is unmarked.
	The details are similar to those in the proofs of \cref{A-to-TT,Ap-to-TpT}.
\end{proof}

\begin{proof}[Proof of \cref{T-strong-monoidal-pseudo}]
	Since both $T(-\pseudo-)$ and $T(-)\pseudo T(-)$ preserve colimits in each variable, it suffices to check the existence of natural isomorphisms $T(X \pseudo Y)\cong T(X)\pseudo T(Y)$ for $X,Y$ generic (possibly marked) cubes.
	
	For the appropriate values of $m$ and $n$, we may obtain isomorphisms
	\begin{align*}
	T(\cube m \pseudo \cube n) &\cong T(\cube m) \pseudo T(\cube n),\\
	T(\mcube m \pseudo \cube n) &\cong T(\mcube m) \pseudo T(\cube n),\\
	T(\cube m \pseudo \mcube n) &\cong T(\cube m) \pseudo T(\mcube n)
	\end{align*}
	by reflecting to $\PreComp$ the complicial marking extensions of \cref{Ap-to-TpT,Ap'-to-TPT'} and the dual of the latter respectively.
	
	Let $m,n \ge 1$.
	Observe that the left square below is a pushout in $\mcSet$ by \cref{Gray-tensor-of-monos}(3):
	\[
	\begin{tikzcd}[row sep = large]
	\cube m \pseudo \cube n
	\arrow [r]
	\arrow [d] &
	\mcube m \pseudo \cube n
	\arrow [d] &
	T(\cube m \pseudo \cube n)
	\arrow [r]
	\arrow [d] &
	T(\mcube m \pseudo \cube n)
	\arrow [d] \\
	\cube m \pseudo \mcube n
	\arrow [r] &
	\mcube m \pseudo \mcube n &
	T(\cube m \pseudo \mcube n)
	\arrow [r] &
	T(\mcube m \pseudo \mcube n)
	\end{tikzcd}
	\]
	Since $T$ is cocontinuous, it follows that the right square is a pushout in $\PreComp$.
	On the other hand, since both $T(\cube m) \to T(\mcube m)$ and $T(\cube n) \to T(\mcube n)$ are entire, the square below is a pushout in $\PreComp$ by \cref{pseudo-Gray-of-entire-maps}:
	\[
	\begin{tikzcd}[row sep = large]
	T(\cube m) \pseudo T(\cube n)
	\arrow [r]
	\arrow [d] &
	T(\mcube m) \pseudo T(\cube n)
	\arrow [d] \\
	T(\cube m) \pseudo T(\mcube n)
	\arrow [r] &
	T(\mcube m) \pseudo T(\mcube n)
	\end{tikzcd}
	\]
	Thus by comparing the two pushout squares in $\PreComp$, we obtain $T(\mcube m \pseudo \mcube n) \cong T(\mcube m) \pseudo T(\mcube n)$.
	The naturality of these isomorphisms is evident, and this completes the proof.
\end{proof}

\section{Triangulating model structures} \label{sec:triangulating-quillen}
The main theorem of our final section is the following.

\begin{thm}\label{left-Quillen}
	The adjunction $T \dashv U$ is a Quillen adjunction with respect to the comical model structure on $\mcSet$ and the complicial model structure on $\PreComp$.
\end{thm}
In the following proof, we denote a non-degenerate $r$-simplex $\phi \colon  \{1,2,3\} \to \{1,\dots,r,\pm\infty\}$ in the simplicial set $(\Delta^1)^{\times 3}$ by the sequence $\phi(1)\phi(2)\phi(3)$ and omitting the letter $\infty$.
For instance, $21-$ denotes the 2-simplex $\phi$ given by $\phi(1) = 2$, $\phi(2) = 1$ and $\phi(3) = -\infty$.
Note that since $\phi$ is assumed to be non-degenerate, the dimension of $\phi$ can be recovered as the maximum integer appearing in the sequence.
\begin{proof}
	We first show that $T$ preserves cofibrations.
	It suffices to prove that $T$ sends the boundary inclusions and the markers in $\mcSet$ to monomorphisms in $\mSet$.
	Clearly $T$ sends the boundary inclusions $\partial \cube 0 \hookrightarrow \cube 0$ and $\partial \cube 1 \hookrightarrow \cube 1$ to (maps that are isomorphic to) the boundary inclusions $\partial\Delta^0 \hookrightarrow \Delta^0$ and $\partial \Delta^1 \hookrightarrow \Delta^1$ respectively.
	For any $n \ge 2$, we have
	\begin{align*}
	T\bigl(\partial \cube n \hookrightarrow \cube n\bigr)  &\cong T\left(\bigl(\partial \cube 1 \hookrightarrow \cube 1\bigr)^{\hat \otimes n}\right)\\
	&\cong \left(T\bigl(\partial \cube 1 \hookrightarrow \cube 1\bigr)\right)^{\hat \otimes n}\\
	&\cong \bigl(\partial \Delta^1 \hookrightarrow \Delta^1 \bigr)^{\hat \otimes n}
	\end{align*}
	by \cref{geom-prod-of-bdry,T-strong-monoidal-lax}, and the last map is clearly a monomorphism.
	Also, $T$ sends the marker $\cube n \hookrightarrow \mcube n$ to the monomorphism $(\Delta^1)^{\otimes n} \to \trunc{n-1}\bigl((\Delta^1)^{\otimes n}\bigr)$ by definition.
	This shows that $T$ preserves cofibrations.
	
	Next we show that $T$ sends the open box inclusions to trivial cofibrations.
	We will check this ``by hand'' on the boxes of dimension $\le 3$.
	This will imply the general case since the higher dimensional box inclusions are generated by these low dimensional ones in the sense of \cref{elementary-boxes}, $T$ is strong monoidal with respect to the lax Gray tensor products (\cref{T-strong-monoidal-lax}), and the Leibniz Gray tensor product of a complicial horn inclusion and a monomorphism (in $\PreComp$) may be obtained as a composite of pushouts of complicial horn inclusions \cite[Lem.~72]{verity:weak-complicial-1}.
	
	Clearly $T$ sends $\obox{1}{1}{\epsilon} \hookrightarrow \admcube{1}{1}{\epsilon}$ to the trivial cofibration $\Lambda^1_{1-\epsilon} \incl \admdelta{1}{1-\epsilon}$.
	Consider the open box inclusion $\obox{2}{1}{0} \incl \admcube{2}{1}{0}$.
	Its image under $T$ may be written as a pushout of the horn inclusion $\Lambda^2_1 \incl \Delta^2_1$ followed by a pushout of $\Lambda^2_2 \incl \Delta^2_1$.
	The following pictures (in which thick arrows indicate marked simplices) depict this factorization:
	\[
	\left\{\begin{tikzpicture}[baseline = 15, scale = 1.3]
	\filldraw
	(0,0) circle [radius = 1pt]
	(1,0) circle [radius = 1pt]
	(0,1) circle [radius = 1pt]
	(1,1) circle [radius = 1pt];
	\draw[->] (0.2,1) -- (0.8,1);
	\draw[->] (1,0.8) -- (1,0.2);
	\draw[->, line width = 1.3pt] (0.2,0) -- (0.8,0);
	\end{tikzpicture}\right\}
	\hspace{10pt}\incl\hspace{10pt}
	\left\{\begin{tikzpicture}[baseline = 15, scale = 1.3]
	\filldraw
	(0,0) circle [radius = 1pt]
	(1,0) circle [radius = 1pt]
	(0,1) circle [radius = 1pt]
	(1,1) circle [radius = 1pt];
	\draw[->] (0.2,1) -- (0.8,1);
	\draw[->] (1,0.8) -- (1,0.2);
	\draw[->] (0.2,0.8) -- (0.8,0.2);
	\draw[->, line width = 1.3pt] (0.2,0) -- (0.8,0);
	\draw[->, line width = 1.3pt, double] (0.6,0.6) -- (0.8,0.8);
	\end{tikzpicture}\right\}
	\hspace{10pt}\incl\hspace{10pt}
	\left\{\begin{tikzpicture}[baseline = 15, scale = 1.3]
	\filldraw
	(0,0) circle [radius = 1pt]
	(1,0) circle [radius = 1pt]
	(0,1) circle [radius = 1pt]
	(1,1) circle [radius = 1pt];
	\draw[->] (0.2,1) -- (0.8,1);
	\draw[->] (1,0.8) -- (1,0.2);
	\draw[->] (0.2,0.8) -- (0.8,0.2);
	\draw[->] (0,0.8) -- (0,0.2);
	\draw[->, line width = 1.3pt] (0.2,0) -- (0.8,0);
	\draw[->, line width = 1.3pt, double] (0.6,0.6) -- (0.8,0.8);
	\draw[->, line width = 1.3pt, double] (0.4,0.4) -- (0.2,0.2);
	\end{tikzpicture}\right\}
	\]
	The box inclusions $\obox{2}{k}{\epsilon} \incl \admcube{2}{k}{\epsilon}$ for other values of $k$ and/or $\epsilon$ can be treated similarly.
	
	Now consider the open box inclusion $\obox 3 2 0 \incl \admcube 3 2 0$.
	Observe that the only marked, non-degenerate cubes in $\admcube 3 2 0$ are $\id$, $\face 1 1$, $\face 3 1$, and $\face 3 1 \face 1 1$:
	\[
	\begin{tikzpicture}[baseline = -2, scale = 0.7]
	\filldraw
	(150:2) circle [radius = 1.6pt]
	(90:2) circle [radius = 1.6pt]
	(30:2) circle [radius = 1.6pt]
	(-30:2) circle [radius = 1.6pt]
	(-90:2) circle [radius = 1.6pt]
	(-150:2) circle [radius = 1.6pt]
	(0,0) circle [radius = 1.6pt];
	
	\draw[{->}] (135:1.8) -- (105:1.8);
	\draw[{->}] (75:1.8) -- (45:1.8);
	\draw[->] (15:1.8) -- (-15:1.8);
	\draw[{->}] (165:1.8) -- (-165:1.8);	
	\draw[{->}] (-135:1.8) --(-105:1.8);
	\draw[{->}] (-75:1.8) -- (-45:1.8);
	
	\draw[->] (150:1.5) -- (150:0.5);
	\draw[->] (30:0.5) -- (30:1.5);
	\draw[->] (-90:0.5) -- (-90:1.5);
	
	\node[scale=0.7] at (-0.5,-1) {$\partial_{1,0}$};
	\node[scale=0.7] at (-0.7,1) {$\partial_{3,0}$};
	\node[scale=0.7] at (1.2,0.1) {$\partial_{2,1}$};
	
	\draw[->,double] (-150:1) + (-0.3,-0.3) --+ (0.3,0.3);
	\draw[->,double] (90:1) + (-0.3,-0.3) --+ (0.3,0.3);
	\draw[->,double] (-30:1) + (-0.3,-0.3) --+ (0.3,0.3);
	\end{tikzpicture}
	\hspace{10pt}
	\begin{tikzpicture}[baseline = -2]
	\draw[line width = 1.3pt]
	(0,0) -- (0.7,0)
	(0,0.1)--(0.6,0.1)
	(0,-0.1)--(0.6,-0.1)
	(0.5,0.2)--(0.7,0)--(0.5,-0.2);
\end{tikzpicture}
	\hspace{10pt}
	\begin{tikzpicture}[baseline = -2, scale = 0.7]
	\filldraw
	(150:2) circle [radius = 1.6pt]
	(90:2) circle [radius = 1.6pt]
	(30:2) circle [radius = 1.6pt]
	(-30:2) circle [radius = 1.6pt]
	(-90:2) circle [radius = 1.6pt]
	(-150:2) circle [radius = 1.6pt]
	(0,0) circle [radius = 1.6pt];
	
	\draw[{->}] (135:1.8) -- (105:1.8);
	\draw[{->}] (75:1.8) -- (45:1.8);
	\draw[->] (15:1.8) -- (-15:1.8);
	\draw[{->}] (165:1.8) -- (-165:1.8);	
	\draw[{->}] (-135:1.8) --(-105:1.8);
	\draw[{->}] (-75:1.8) -- (-45:1.8);

	\draw[->] (90:1.5) -- (90:0.5);
	\draw[->] (-150:1.5) -- (-150:0.5);
	\draw[->,line width = 1.3pt] (-30:0.5) -- (-30:1.5);
	
	\node[scale=0.7] at (0.5,1) {$\partial_{1,1}$};
	\node[scale=0.7] at (0.7,-1) {$\partial_{3,1}$};
	\node[scale=0.7] at (-1.2,-0.1) {$\partial_{2,0}$};
	
	\draw[->,double] (150:1) + (-0.3,-0.3) --+ (0.3,0.3);
	\draw[->,double,line width = 1.3pt] (-90:1) + (-0.3,-0.3) --+ (0.3,0.3);
	\draw[->,double,line width = 1.3pt] (30:1) + (-0.3,-0.3) --+ (0.3,0.3);
	\end{tikzpicture}
	\]
	Let $B'$ denote the marked simplicial set obtained from $(\Delta^1)^{\otimes 3}$ by marking the 3-simplex $\iota_3 =123$, the 2-simplices $12-$ and $-12$, and the 1-simplex $-1-$.
	Then the pre-complicial reflection $(B')^\precomp$ is precisely $T(\admcube 3 2 0)$.
	Observe that the $3$-simplex $231$ specifies a map $\admdelta 3 1 \to B'$.
	So if we mark its $1$st face $121$, the resulting object $B$ has the same pre-complicial reflection as $B'$.
	We will adopt $B$ (rather than $B'$) as our ``model'' for $T(\admcube 3 2 0)$.
	For the open box, we define $A$ to be the regular subset of $B$ (or $B'$) consisting of those simplices $\phi$ such that:
	\begin{itemize}
		\item $\phi(1) \in \{\pm\infty\}$;
		\item $\phi(2) = -\infty$; or
		\item $\phi(3) \in \{\pm\infty\}$
	\end{itemize}
	so that the pre-complicial reflection $A^\precomp$ is $T(\obox 3 2 0)$.
	Then we have a sequence of inclusions
	\[
	A = A^0 \incl A^1 \incl \dots \incl A^8 = B
	\]
	where $A^{s-1} \incl A^s$ is the pushout of a suitable trivial cofibration as indicated in \cref{table}.
	\begin{table}
		\begin{tabular}{|c||c|c|c|}\hline
			$s$ & pushout of & interior & missing face\\\hline
			1 & $\Lambda^2_1 \incl \Delta^2_1$ & $211$ & $111$\\
			2 & $\Lambda^2_1 \incl \Delta^2_1$ & $2\hspace{-2pt}+\hspace{-2pt}1$ & $1\hspace{-2pt}+\hspace{-2pt}1$\\
			3 & $\Lambda^3_2 \incl \Delta^3_2$ & $312$ & $212$\\
			4 & $\Lambda^3_1 \incl \Delta^3_1$ & $213$ & $112$\\
			5 & $\Lambda^3_2 \incl \Delta^3_2$ & $123$ & $122$\\
			6 & $\trunc 1 \Lambda^3_2 \incl \admdeltapp 3 2$ & $321$ & $221$\\
			7 & $\trunc 1 \Lambda^3_1 \incl \admdeltapp 3 1$ & $231$ & $121$\\
			8 & $\Lambda^3_3 \incl \Delta^3_3$ & $132$ & $1\hspace{-2pt}+\hspace{-2pt}2$\\\hline
		\end{tabular}
	\caption{Inclusions $A^{s-1} \incl A^s$}\label{table}
\end{table}
This table is to be interpreted as saying, for example, that the inclusion $A^0 \incl A^1$ fits into the pushout square
\[
\begin{tikzcd}
\Lambda^2_1
\arrow [r]
\arrow [d, hook] &
A^0
\arrow [d,hook] \\
\Delta^2_1
\arrow [r] &
A^1
\end{tikzcd}
\]
in $\mSet$ where the composite $\Delta^2_1 \to A^1 \incl B$ corresponds to the simplex $\phi = 211$, and the face $\phi   \faces 1$ corresponding to the missing face in the horn is $111$.
One can check that every non-degenerate face in $B \setminus A$ appears exactly once in \cref{table}, and moreover it is marked if and only if it appears either in the ``interior'' column or in the sixth or seventh row.
It is also straightforward to verify using \cref{tensor-power} that, for each $1 \le s \le 8$, the marked simplicial set $A^{s-1}$ indeed contains enough (marked) simplices to support a map from the domain in the ``pushout of'' column.
By reflecting everything to $\PreComp$, we can deduce that $T$ sends the box inclusion $\obox 3 2 0 \incl \admcube 3 2 0$ to a trivial cofibration.
The case $\obox 3 2 1 \incl \admcube 3 2 1$ is dual, and the other $3$-dimensional boxes can be treated using \cref{elementary-boxes}.
	
	It remains to prove that $T\bigl(\admcubep{n}{k}{\epsilon} \hookrightarrow \admcubepp{n}{k}{\epsilon}\bigr)$ is a trivial cofibration for any $n,k,\epsilon$.
	We show that this map is in fact invertible.
	Consider the following commutative diagram in $\PreComp$:
	\[
	\begin{tikzcd}
		\Delta^{n-1}
		\arrow [r, "\iota_{n-1}"]
		\arrow [d, hook] &
		T(\cube{n-1})
		\arrow [d,hook]
		\arrow [r, "T(\face k \epsilon)"] &
		T\bigl(\admcubep n k \epsilon\bigr)
		\arrow [d,hook] \\
		\mDelta{n-1}
		\arrow [r, "\iota_{n-1}", swap] &
		T(\mcube{n-1})
		\arrow [r, "T(\face k \epsilon)", swap] &
		T\bigl(\admcubepp n k \epsilon\bigr)
	\end{tikzcd}
	\]
	The left square is a pushout by the definition of $T$, and the right square is a pushout because it is the image of a pushout square under $T$.
	Thus, to show that $T\bigl(\admcubep{n}{k}{\epsilon} \hookrightarrow \admcubepp{n}{k}{\epsilon}\bigr)$ is invertible, it suffices to prove that the $(n-1)$-simplex in $T\bigl(\admcubep{n}{k}{\epsilon}\bigr)$ corresponding to the top row above is marked.
	Note that, by unwinding the above argument, one can express $T\bigl(\obox{n}{k}{\epsilon} \hookrightarrow \admcube{n}{k}{\epsilon}\bigr)$ as a composite 
	\[
	T(\obox n k \epsilon) = X^0 \incl X^1 \incl \dots \incl X^N = T(\admcube n k \epsilon)
	\]
	where each map $X^{s-1} \incl X^s$ is a pushout (in $\PreComp$) of the pre-complicial reflection of a complicial horn inclusion.
	We show by induction on $s$ that all $(n-1)$-simplices contained in $X^s$ are marked in $T\bigl(\admcubep n k \epsilon\bigr)$.
	
	For the base case, write $\admcubep n k \epsilon$ as a pushout
	\[
	\begin{tikzcd}
	\coprod	\cube m
	\arrow [r]
	\arrow [d] &
	\cube n
	\arrow [d]\\
	\coprod \mcube m
	\arrow [r] &
	\admcubep n k \epsilon
	\end{tikzcd}
	\]
	where the coproducts are taken over all marked faces of $\admcubep n k \epsilon$, which in particular include all faces $\face \ell \eta$ of codimension 1 with $(\ell,\eta) \neq (k,\epsilon)$.
	By applying $T$ to this pushout square, we can deduce that any $(n-1)$-simplex of the form
	\[
	\begin{tikzcd}[column sep = large]
	\Delta^{n-1}
	\arrow [r, "\iota_{n-1}"] &
	(\Delta^1)^{\otimes (n-1)} = T(\cube{n-1})
	\arrow [r, "T(\face \ell \eta)"] &
	T(\cube n)
	\end{tikzcd}
	\]
	is marked in $T\bigl(\admcubep n k \epsilon\bigr)$.
	By combining this observation with \cref{tensor-power}, one can deduce that any $(n-1)$-simplex contained in $X^0$ is marked in $T\bigl(\admcubep n k \epsilon\bigr)$.
	
	For the inductive step, suppose that all $(n-1)$-simplices contained in $X^{s-1}$ are marked in $T\bigl(\admcubep n k \epsilon\bigr)$.
	Suppose further that $X^s$ contains a non-degenerate $(n-1)$-simplex $\phi$ that $X^{s-1}$ does not contain (for otherwise we are done).
	Then $X^{s-1} \incl X^s$ fits into either a pushout square of the form
	\[
	\begin{tikzcd}
	(\Lambda^{n-1}_\ell)^\precomp
	\arrow [r]
	\arrow [d,hook] &
	X^{s-1}
	\arrow [d,hook]\\
	(\Delta^{n-1}_\ell)^\precomp
	\arrow [r, "\phi"] &
	X^s
	\end{tikzcd}
	\]
	or one of the form
	\[
	\begin{tikzcd}
	(\Lambda^n_\ell)^\precomp
	\arrow [r]
	\arrow [d,hook] &
	X^{s-1}
	\arrow [d,hook]\\
	(\Delta^n_\ell)^\precomp
	\arrow [r, "\chi"] &
	X^s
	\end{tikzcd}
	\]
	with $\chi   \faces \ell = \phi$.
	In the former case, $\phi$ is marked in $X^s$ and hence in $T\bigl(\admcubep n k \epsilon \bigr)$ since the unique non-degenerate $(n-1)$-simplex in $\Delta^{n-1}_\ell$ is marked.
	In the latter case, the inductive hypothesis implies that $\chi$ extends to the marked simplicial set $\admdeltap n \ell$.
	Since $X^s$ is a pre-complicial set, it follows that $\phi = \chi   \faces \ell$ is marked in $X^s$ and hence in $T\bigl(\admcubep n k \epsilon \bigr)$.
	This completes the proof.
\end{proof}

The saturated and $n$-trivial versions can be proved analogously.

\begin{thm}\label{left-Quillen-n}
	The adjunction $T \dashv U$ is a Quillen adjunction when $\mcSet$ and $\PreComp$ are respectively equipped with:
	\begin{itemize}
		\item the saturated comical model structure and the saturated complicial model structure,
		\item the $n$-trivial comical model structure and the $n$-trivial complicial model structure for some $0 \le n < \infty$, or
		\item the saturated $n$-trivial comical model structure and the saturated $n$-trivial complicial model structure for some $0 \le n < \infty$.
	\end{itemize}
\end{thm}
\begin{proof}
	The proof is analogous to that of \cref{left-Quillen}.
	For the $n$-trivial versions, observe that $T$ sends the (cubical) $m$-marker to a pushout of the (simplicial) $m$-marker.
	
	For the saturated versions, we only check that $T$ sends the basic Rezk maps to trivial cofibrations.
	(That the higher Rezk maps are also sent to trivial cofibrations then follows from \cref{PreComp-monoidal-model,T-strong-monoidal-lax}.)
	Note that $T$ sends all four basic Rezk maps to the same map (up to isomorphism).
	This unique image, which we denote $TL \incl TL'$, may be visualised as follows:
	\[
	TL = 
	\left\{\begin{tikzpicture}[baseline = 18, scale = 1.5]
		\foreach \x in {0,1,2}
		\filldraw (\x,0) circle [radius = 1pt] (\x,1) circle [radius = 1pt];
		\draw[->] (0.2,1) -- (0.8,1);
		\draw[->] (1,0.8) -- (1,0.2);
		\draw[->] (1.2,0) -- (1.8,0);
		\draw[->, line width = 1.3pt] (0,0.8) -- (0,0.2);
		\draw[->, line width = 1.3pt] (0.2,0) -- (0.8,0);
		\draw[->, line width = 1.3pt] (1.2,1) -- (1.8,1);
		\draw[->, line width = 1.3pt] (2,0.8) -- (2,0.2);
		\draw[->, line width = 1.3pt] (0.2,0.8) -- (0.8,0.2);
		\draw[->, line width = 1.3pt] (1.2,0.8) -- (1.8,0.2);
		\draw[->, double, line width = 1.3pt] (0.6,0.6) -- (0.8,0.8);
		\draw[->, double, line width = 1.3pt] (0.4,0.4) -- (0.2,0.2);
		\draw[->, double, line width = 1.3pt] (1.6,0.6) -- (1.8,0.8);
		\draw[->, double, line width = 1.3pt] (1.4,0.4) -- (1.2,0.2);
	\end{tikzpicture}\right\},
\hspace{20pt}
	TL' = 
	\left\{\begin{tikzpicture}[baseline = 18, scale = 1.5]
		\foreach \x in {0,1,2}
		\filldraw (\x,0) circle [radius = 1pt] (\x,1) circle [radius = 1pt];
		\draw[->, line width = 1.3pt] (0.2,1) -- (0.8,1);
		\draw[->, line width = 1.3pt] (1,0.8) -- (1,0.2);
		\draw[->, line width = 1.3pt] (1.2,0) -- (1.8,0);
		\draw[->, line width = 1.3pt] (0,0.8) -- (0,0.2);
		\draw[->, line width = 1.3pt] (0.2,0) -- (0.8,0);
		\draw[->, line width = 1.3pt] (1.2,1) -- (1.8,1);
		\draw[->, line width = 1.3pt] (2,0.8) -- (2,0.2);
		\draw[->, line width = 1.3pt] (0.2,0.8) -- (0.8,0.2);
		\draw[->, line width = 1.3pt] (1.2,0.8) -- (1.8,0.2);
		\draw[->, double, line width = 1.3pt] (0.6,0.6) -- (0.8,0.8);
		\draw[->, double, line width = 1.3pt] (0.4,0.4) -- (0.2,0.2);
		\draw[->, double, line width = 1.3pt] (1.6,0.6) -- (1.8,0.8);
		\draw[->, double, line width = 1.3pt] (1.4,0.4) -- (1.2,0.2);
	\end{tikzpicture}\right\}
	\]
	Let $A$ (resp.~$A'$) be the regular subset of $TL$ (resp.~$TL'$) consisting of the middle two non-degenerate $2$-simplices so that they look like:
	\[
	A = 
	\left\{\begin{tikzpicture}[baseline = 18, scale = 1.5]
		\filldraw
		(1,0) circle [radius = 1pt]
		(2,0) circle [radius = 1pt]
		(0,1) circle [radius = 1pt]
		(1,1) circle [radius = 1pt];
		\draw[->] (0.2,1) -- (0.8,1);
		\draw[->] (1,0.8) -- (1,0.2);
		\draw[->] (1.2,0) -- (1.8,0);
		\draw[->, line width = 1.3pt] (0.2,0.8) -- (0.8,0.2);
		\draw[->, line width = 1.3pt] (1.2,0.8) -- (1.8,0.2);
		\draw[->, double, line width = 1.3pt] (0.6,0.6) -- (0.8,0.8);
		\draw[->, double, line width = 1.3pt] (1.4,0.4) -- (1.2,0.2);
	\end{tikzpicture}\right\},
	\hspace{20pt}
	A' = 
	\left\{\begin{tikzpicture}[baseline = 18, scale = 1.5]
		\filldraw
		(1,0) circle [radius = 1pt]
		(2,0) circle [radius = 1pt]
		(0,1) circle [radius = 1pt]
		(1,1) circle [radius = 1pt];
		\draw[->, line width = 1.3pt] (0.2,1) -- (0.8,1);
		\draw[->, line width = 1.3pt] (1,0.8) -- (1,0.2);
		\draw[->, line width = 1.3pt] (1.2,0) -- (1.8,0);
		\draw[->, line width = 1.3pt] (0.2,0.8) -- (0.8,0.2);
		\draw[->, line width = 1.3pt] (1.2,0.8) -- (1.8,0.2);
		\draw[->, double, line width = 1.3pt] (0.6,0.6) -- (0.8,0.8);
		\draw[->, double, line width = 1.3pt] (1.4,0.4) -- (1.2,0.2);
	\end{tikzpicture}\right\}
	\]
	Clearly $TL \incl TL'$ is a pushout of its restriction $A \incl A'$, so it suffices to show that the latter is a trivial cofibration.
	
	Observe that $A$ is isomorphic to the regular subset of $\presat$ consisting of $\faces 0$ and $\faces 3$.
	One can check that the inclusion $A \incl \presat$ may be written as the composite of a pushout of $\horn 1 2 \incl \cDelta 1 2$ (attaching $\faces 1$) and a pushout of $\horn 2 3 \incl \cDelta 2 3$.
	Hence $A \incl \presat$ is complicial.
	Similarly, $A' \incl (\Delta^3)^\sharp$ is the composite of pushouts of two complicial horn inclusions and one elementary complicial marking extension (marking the $1$-simplex $\{0,3\}$), so it is complicial too.
	Since the square
	\[
	\begin{tikzcd}
		A
		\arrow [r, hook]
		\arrow [d, hook] &
		A'
		\arrow [d, hook] \\
		\presat
		\arrow [r, hook] &
		(\Delta^3)^\sharp
	\end{tikzcd}
	\]
	commutes, the desired conclusion now follows by the 2-out-of-3 property.
\end{proof}

%
%


\bibliographystyle{amsalphaurlmod}
\bibliography{all-refs}

\providecommand{\bysame}{\leavevmode\hbox to3em{\hrulefill}\thinspace}
\providecommand{\MR}{\relax\ifhmode\unskip\space\fi MR }
\providecommand{\MRhref}[2]{%
  \href{http://www.ams.org/mathscinet-getitem?mr=#1}{#2}
}
\providecommand{\href}[2]{#2}
\begin{thebibliography}{AABS02}

\bibitem[AABS02]{al-agl-brown-steiner}
Fahd~Ali Al-Agl, Ronald Brown, and Richard Steiner, \emph{Multiple categories:
  the equivalence of a globular and a cubical approach}, Adv. Math.
  \textbf{170} (2002), no.~1, 71--118, \href
  {http://dx.doi.org/10.1006/aima.2001.2069}
  {\path{doi:10.1006/aima.2001.2069}}.

\bibitem[Ait86]{aitchison:cubes}
Iain~R. Aitchison, \emph{The geometry of oriented cubes}, Macquarie University
  Research Report, 1986.

\bibitem[Ara14]{ara:higher-quasicat}
Dimitri Ara, \emph{Higher quasi-categories vs higher {R}ezk spaces}, J.
  K-Theory \textbf{14} (2014), no.~3, 701--749, \href
  {http://dx.doi.org/10.1017/S1865243315000021}
  {\path{doi:10.1017/S1865243315000021}},
  \url{https://doi.org/10.1017/S1865243315000021}.

\bibitem[Bar05]{barwick:thesis}
Clark Barwick, \emph{(infinity, n)-{C}at as a closed model category}, ProQuest
  LLC, Ann Arbor, MI, 2005, Thesis (Ph.D.)--University of Pennsylvania.

\bibitem[Cis06]{CisinskiAsterisque}
Denis-Charles Cisinski, \emph{Les pr\'efaisceaux comme mod\`eles des types
  d'homotopie}, Ast\'erisque (2006), no.~308, xxiv+390.

\bibitem[Cra95]{crans:thesis}
Sjoerd~E Crans, \emph{On combinatorial models for higher-dimensional
  homotopies}, Ph.D. thesis, Utrecht University, 1995.

\bibitem[DKLS20]{doherty-kapulkin-lindsey-sattler}
Brandon Doherty, Krzysztof Kapulkin, Zachery Lindsey, and Christian Sattler,
  \emph{Cubical models of $(\infty,1)$-categories}, 2020, \href
  {http://arxiv.org/abs/2005.04853} {\path{arXiv:2005.04853}}.

\bibitem[DKM23]{doherty-kapulkin-maehara}
Brandon Doherty, Krzysztof Kapulkin, and Yuki Maehara, \emph{Equivalence of
  cubical and simplicial approaches to ({$\infty$},{$n$})-categories}, Adv.
  Math. \textbf{416} (2023), Paper No. 108902, 81, \href
  {http://dx.doi.org/10.1016/j.aim.2023.108902}
  {\path{doi:10.1016/j.aim.2023.108902}},
  \url{https://doi.org/10.1016/j.aim.2023.108902}.

\bibitem[GM03]{grandis-mauri}
Marco Grandis and Luca Mauri, \emph{Cubical sets and their site}, Theory Appl.
  Categ. \textbf{11} (2003), No. 8, 185--211.

\bibitem[GR17a]{gaitsgory-rozenblyum:dag-1}
Dennis Gaitsgory and Nick Rozenblyum, \emph{A study in derived algebraic
  geometry. {V}ol. {I}. {C}orrespondences and duality}, Mathematical Surveys
  and Monographs, vol. 221, American Mathematical Society, Providence, RI,
  2017.

\bibitem[GR17b]{gaitsgory-rozenblyum:dag-2}
\bysame, \emph{A study in derived algebraic geometry. {V}ol. {II}.
  {D}eformations, {L}ie theory and formal geometry}, Mathematical Surveys and
  Monographs, vol. 221, American Mathematical Society, Providence, RI, 2017.

\bibitem[Gra74]{gray:book}
John~W. Gray, \emph{Formal category theory: adjointness for {$2$}-categories},
  Lecture Notes in Mathematics, Vol. 391, Springer-Verlag, Berlin-New York,
  1974.

\bibitem[Hen20]{Henry:weak}
Simon Henry, \emph{Weak model categories in classical and constructive
  mathematics}, Theory Appl. Categ. \textbf{35} (2020), Paper No. 24, 875--958.

\bibitem[Jar06]{jardine:categorical-homotopy-theory}
J.~F. Jardine, \emph{Categorical homotopy theory}, Homology Homotopy Appl.
  \textbf{8} (2006), no.~1, 71--144,
  \url{http://projecteuclid.org/euclid.hha/1140012467}.

\bibitem[JT07]{joyal-tierney:qcat-vs-segal}
Andr{\'e} Joyal and Myles Tierney, \emph{Quasi-categories vs {S}egal spaces},
  Categories in algebra, geometry and mathematical physics, Contemp. Math.,
  vol. 431, Amer. Math. Soc., Providence, RI, 2007, pp.~277--326, \href
  {http://dx.doi.org/10.1090/conm/431/08278}
  {\path{doi:10.1090/conm/431/08278}}.

\bibitem[KLW19]{kapulkin-lindsey-wong}
Krzysztof Kapulkin, Zachery Lindsey, and Liang~Ze Wong, \emph{A co-reflection
  of cubical sets into simplicial sets with applications to model structures},
  New York J. Math. \textbf{25} (2019), 627--641.

\bibitem[KV20]{kapulkin-voevodsky}
Krzysztof Kapulkin and Vladimir Voevodsky, \emph{A cubical approach to
  straightening}, J. Topol. \textbf{13} (2020), no.~4, 1682--1700, \href
  {http://dx.doi.org/10.1112/topo.12173} {\path{doi:10.1112/topo.12173}},
  \url{https://doi.org/10.1112/topo.12173}.

\bibitem[Lou22]{loubaton:complicial}
F\'elix Loubaton, \emph{$n$-complicial sets as a model of
  $(\infty,n)$-categories}, 2022, \href {http://arxiv.org/abs/2207.08504}
  {\path{arXiv:2207.08504}}.

\bibitem[Lur09]{lurie:tqft}
Jacob Lurie, \emph{On the classification of topological field theories},
  Current developments in mathematics, 2008, Int. Press, Somerville, MA, 2009,
  pp.~129--280.

\bibitem[Mae21]{maehara:gray}
Yuki Maehara, \emph{The {G}ray tensor product for 2-quasi-categories}, Adv.
  Math. \textbf{377} (2021), 107461, 78, \href
  {http://dx.doi.org/10.1016/j.aim.2020.107461}
  {\path{doi:10.1016/j.aim.2020.107461}},
  \url{https://doi.org/10.1016/j.aim.2020.107461}.

\bibitem[Mal09]{maltsiniotis:connections-strict-test-cat}
Georges Maltsiniotis, \emph{La cat\'{e}gorie cubique avec connexions est une
  cat\'{e}gorie test stricte}, Homology Homotopy Appl. \textbf{11} (2009),
  no.~2, 309--326, \url{http://projecteuclid.org/euclid.hha/1296138523}.

\bibitem[Ols09]{olschok:thesis}
Marc Olschok, \emph{On constructions of left determined model structures},
  Ph.D. thesis, Masaryk University, 2009.

\bibitem[OR18]{ozornova-rovelli:model-structure}
Viktoriya Ozornova and Martina Rovelli, \emph{Model structures for
  $(\infty,n)$-categories on (pre)stratified simplicial sets and prestratified
  simplicial spaces}, preprint, 2018, \href {http://arxiv.org/abs/1809.10621}
  {\path{arXiv:1809.10621}}.

\bibitem[Rez10]{rezk:cartesian-presentation}
Charles Rezk, \emph{A {C}artesian presentation of weak {$n$}-categories}, Geom.
  Topol. \textbf{14} (2010), no.~1, 521--571, \href
  {http://dx.doi.org/10.2140/gt.2010.14.521}
  {\path{doi:10.2140/gt.2010.14.521}}.

\bibitem[RV20]{riehl-verity:more-elements}
Emily Riehl and Dominic Verity, \emph{(more) elements of $\infty$-category
  theory}, manuscript in progress, 2020,
  \url{http://www.math.jhu.edu/~eriehl/more-elements.pdf}.

\bibitem[Sim12]{simpson:homotopy-of-higher-cats}
Carlos Simpson, \emph{Homotopy theory of higher categories}, New Mathematical
  Monographs, vol.~19, Cambridge University Press, Cambridge, 2012.

\bibitem[Ste93]{steiner:algebra-of-directed-complexes}
Richard Steiner, \emph{The algebra of directed complexes}, Appl. Categ.
  Structures \textbf{1} (1993), no.~3, 247--284, \href
  {http://dx.doi.org/10.1007/BF00873990} {\path{doi:10.1007/BF00873990}}.

\bibitem[Ste06]{steiner:cubical-nerve}
\bysame, \emph{Thin fillers in the cubical nerves of omega-categories}, Theory
  Appl. Categ. \textbf{16} (2006), No. 8, 144--173.

\bibitem[Str87]{street:algebra-of-oriented-simplexes}
Ross Street, \emph{The algebra of oriented simplexes}, J. Pure Appl. Algebra
  \textbf{49} (1987), no.~3, 283--335, \href
  {http://dx.doi.org/10.1016/0022-4049(87)90137-X}
  {\path{doi:10.1016/0022-4049(87)90137-X}}.

\bibitem[Str91]{street:parity-complexes}
\bysame, \emph{Parity complexes}, Cahiers Topologie G\'{e}om.
  Diff\'{e}rentielle Cat\'{e}g. \textbf{32} (1991), no.~4, 315--343.

\bibitem[Ton92]{tonks:cubical-groups}
A.~P. Tonks, \emph{Cubical groups which are {K}an}, J. Pure Appl. Algebra
  \textbf{81} (1992), no.~1, 83--87, \href
  {http://dx.doi.org/10.1016/0022-4049(92)90136-4}
  {\path{doi:10.1016/0022-4049(92)90136-4}},
  \url{https://doi.org/10.1016/0022-4049(92)90136-4}.

\bibitem[Ver07]{verity:weak-complicial-2}
Dominic Verity, \emph{Weak complicial sets. {II}. {N}erves of complicial
  {G}ray-categories}, Categories in algebra, geometry and mathematical physics,
  Contemp. Math., vol. 431, Amer. Math. Soc., Providence, RI, 2007,
  pp.~441--467, \href {http://dx.doi.org/10.1090/conm/431/08284}
  {\path{doi:10.1090/conm/431/08284}},
  \url{https://doi.org/10.1090/conm/431/08284}.

\bibitem[Ver08a]{verity:complicial}
\bysame, \emph{Complicial sets characterising the simplicial nerves of strict
  {$\omega$}-categories}, Mem. Amer. Math. Soc. \textbf{193} (2008), no.~905,
  xvi+184, \href {http://dx.doi.org/10.1090/memo/0905}
  {\path{doi:10.1090/memo/0905}}, \url{https://doi.org/10.1090/memo/0905}.

\bibitem[Ver08b]{verity:weak-complicial-1}
\bysame, \emph{Weak complicial sets. {I}. {B}asic homotopy theory}, Adv. Math.
  \textbf{219} (2008), no.~4, 1081--1149, \href
  {http://dx.doi.org/10.1016/j.aim.2008.06.003}
  {\path{doi:10.1016/j.aim.2008.06.003}},
  \url{https://doi.org/10.1016/j.aim.2008.06.003}.

\end{thebibliography}

\end{document}